\documentclass[12pt]{amsart}

\usepackage{graphicx} 
\usepackage{scrextend}
\usepackage{amsfonts, amsmath, amssymb,amsthm}  
\usepackage{times,enumerate}
\usepackage{mathdots}%
\usepackage{nccmath}
\usepackage{mathtools}
\usepackage{ulem}
    {\end{pmatrix}\end{medsize}}%
\usepackage{dsfont,bbm}
\usepackage{rotating}
\usepackage{lscape}
\usepackage{url}
\usepackage{multicol}
\usepackage{thmtools, thm-restate}
\usepackage{blkarray}
\usepackage[margin=2.5cm]{geometry}
\usepackage{enumitem}
\usepackage[pagebackref]{hyperref}
\usepackage[nameinlink,capitalise]{cleveref}
\usepackage{cancel}
\usepackage{multirow}
\usepackage{pifont}
\usepackage{tikz}
\usepackage{array}
\usepackage{booktabs}
\usepackage{tabularx}
\usetikzlibrary{topaths}
\tikzstyle{every picture}+=[remember picture,inner xsep=0,inner ysep=0.25ex]
\usetikzlibrary{calc}
\usepackage{kbordermatrix}
\renewcommand{\kbldelim}{(}
\renewcommand{\kbrdelim}{)}
\renewcommand{\kbrowstyle}{\displaystyle}
\renewcommand{\kbcolstyle}{\displaystyle}
\def\VR{\kern-\arraycolsep\strut\vrule &\kern-\arraycolsep}
\def\vr{\kern-\arraycolsep & \kern-\arraycolsep}
\makeatletter
\newcommand*{\sublabel}[1]{%
    \let\old@currentlabel\@currentlabel%
    \renewcommand{\@currentlabel}{\theenumii}%
    \label{#1}%
    \let\@currentlabel\old@currentlabel%
}
\makeatother

\graphicspath{ {./figs/} }

\DeclareFontFamily{U}{mathx}{\hyphenchar\font45 }
\DeclareFontShape{U}{mathx}{m}{n}{
	<5><6><7><8><9><10><10.95><12><14.4><17.28><20.74><24.88> mathx10
}{}
\DeclareSymbolFont{mathx}{U}{mathx}{m}{n}
\DeclareMathAccent{\widecheck}{0}{mathx}{"71}

\makeatletter
\def\widebreve{\mathpalette\wide@breve}
\def\wide@breve#1#2{\sbox\z@{$#1#2$}%
     \mathop{\vbox{\m@th\ialign{##\crcr
\kern0.08em\brevefill#1{0.8\wd\z@}\crcr\noalign{\nointerlineskip}%
                    $\hss#1#2\hss$\crcr}}}\limits}
\def\brevefill#1#2{$\m@th\sbox\tw@{$#1($}%
  \hss\resizebox{#2}{\wd\tw@}{\rotatebox[origin=c]{90}{\upshape(}}\hss$}
\makeatletter

\newcommand{\RR}{\mathbb R}

\newcommand{\NN}{\mathbb N}

\newcommand{\cZ}{\mathcal Z}

\newcommand{\benu}{\begin{enumerate}}
\newcommand{\eenu}{\end{enumerate}}
\newcommand{\bop}{\begin{opomba}}
\newcommand{\eop}{\end{opomba}}

\allowdisplaybreaks

\newtheorem{theorem}{Theorem}[section]
\newtheorem{corollary}[theorem]{Corollary}
\newtheorem{lemma}[theorem]{Lemma}
\newtheorem{proposition}[theorem]{Proposition}

\theoremstyle{definition}

\newtheorem{remark}[theorem]{Remark}

\definecolor{green-new}{rgb}{0.0, 0.5, 0.0}

\definecolor{cyan}{rgb}{0.0, 0.8, 1.0}

\numberwithin{equation}{section}

\begin{document}

\title{Truncated Moment Problems and the Extension Property on Monomial Curves}

\author[R. Nailwal]{Rajkamal Nailwal${}^{1}$}
\address{Rajkamal Nailwal, Indian Institute of Technology Kanpur, Kanpur, India}
\email{raj1994nailwal@gmail.com}
\thanks{${}^1$ Supported by ANRF-ARG Project:  ANRF /ARG/2025/001228/MS} 

\author[A. Zalar]{Alja\v z Zalar${}^{2}$}
	\address{Alja\v z Zalar, 
        University of Ljubljana, 
		Faculty of Computer and Information Science \& 
		Faculty of Mathematics and Physics, \&
		Institute of Mathematics, Physics and Mechanics, Ljubljana, Slovenia.}
\email{aljaz.zalar@fri.uni-lj.si}
\thanks{${}^2$ Supported by the ARIS
research core funding No.\ P1-0288 and grants No.\ J1-50002, J1-60011, J1-70017.} 

\author[I. Zobovi\v c]{Igor Zobovi\v c${}^{3}$}
\address{I.\ Zobovi\v c, 
Institute of Mathematics, Physics and Mechanics, Ljubljana, Slovenia.}
\email{igor.zobovic@imfm.si}
\thanks{${}^3$ Supported by the ARIS
research core funding No.\ P1-0288.}

\subjclass[2020]{Primary 44A60, 47A57; Secondary 14P05, 13J30, 11E25.}
\keywords{Truncated moment problem; representing measure; positive semidefinite extension; moment matrix; monomial curve; sums of squares.}



\begin{abstract}
In several papers, Stochel and Szafraniec studied moment problems on algebraic sets from an operator-theoretic perspective, investigating when positive definite sequences satisfying polynomial relations admit representing measures. Within this framework, Stochel introduced type A sets, and Bisgaard classified the plane curves defined by relations between two monomials that have this property.
Curto and Fialkow introduced a stronger, truncated version of the type A property, requiring that the existence of a positive semidefinite extension of prescribed degree guarantees the existence of a representing measure. Motivated by Bisgaard’s classification, we determine which plane curves defined by relations between two monomials satisfy this extension property. In the affirmative cases, we obtain explicit bounds on the required extension degree. In the negative cases, we construct truncated sequences that admit positive semidefinite extensions of arbitrarily high order but have no representing measure supported on the curve. These constructions yield explicit polynomials that are nonnegative on the corresponding curves but are not sums of squares in their coordinate rings. In the affirmative cases, we also derive explicit degree bounds for sums-of-squares certificates of strictly positive polynomials.
\end{abstract}

\maketitle

\section{Introduction}
In \cite{StochelSzafraniec94}, Stochel and Szafraniec studied moment problems on algebraic sets from an operator-theoretic perspective, relating the existence of representing measures for positive definite sequences satisfying polynomial relations to normal extensions of algebraic operators. The operator-theoretic approach to moment problems was
further developed in \cite{JS05}, where vector and operator
multidimensional moment problems with jointly subnormal solutions
were characterized in terms of their initial data. Related connections between polynomial relations modulo an
ideal, multivariate three-term recurrences, and orthogonality with
respect to measures supported on algebraic sets were developed in
\cite{css05}. Within this framework, Stochel introduced in \cite{Sto92} the notions of algebraic sets of type A and type B. More precisely, let
$
\cZ_p:=\{x\in\mathbb{R}^d:p(x)=0\}
$
be the real zero set of a polynomial \(p\). The set \(\cZ_p\) is said to be of \textit{type A} if every linear functional on \(\mathbb{R}[x_1,\ldots,x_d]\) that is nonnegative on squares and vanishes on the principal ideal \((p)\) admits a positive representing measure supported on \(\cZ_p\). It is said to be of \textit{type B} if the same conclusion holds for every such functional vanishing on the vanishing ideal
\[
I(\cZ_p):=
\{P\in\mathbb{R}[x_1,\ldots,x_d]:
P(x)=0\ \text{for every }x\in\cZ_p\}.
\]
Using the Real Nullstellensatz, Bisgaard and Stochel proved in \cite[Section~3]{bs16} that the type A and type B properties coincide. Consequently, the type A property depends only on the zero set \(\cZ_p\), not on the choice of its defining polynomial \(p\). 

Stochel also proved in \cite{Sto92} that several classes of plane algebraic curves are of type A, including curves of the forms \(y=q(x)\) and \(yq(x)=1\), and every plane algebraic curve of degree at most \(2\). Using semigroup theory, Bisgaard characterized in \cite{Bis98} which plane curves defined by relations between two monomials,
$
x^{i_1}y^{j_1}=x^{i_2}y^{j_2},
$
are of type A. Moreover, Schm\"udgen's result \cite{s42} implies that every bounded algebraic set \(\cZ_p\) is of type A.

More generally, let
$
A_p:=\mathbb{R}[x_1,\ldots,x_d]/(p).
$
By \cite[Theorem~4.3]{Sto92}, the set \(\cZ_p\) is of type A if and only if the image in \(A_p\) of the cone of sums of squares in \(\mathbb{R}[x_1,\ldots,x_d]\) is dense in the cone of positive elements of \(A_p\), with respect to any locally convex topology on \(A_p\) for which every linear functional is continuous.

Equivalently, $\cZ_p$ is of type~A if and only if the quadratic module generated by
$
Q:=\{1,p,-p\}
$
(see \eqref{def:quadratic-module}) has the \textit{strong moment property} in the sense of \cite[Definition~13.6]{sch17}. The classification of real algebraic curves with the strong moment property follows from results of Scheiderer and Plaumann \cite{s57,p51}; see, for example, \cite[Theorem~3.13 and Proposition~5.2]{p51}.

The present paper is motivated by the study of plane curves defined by relations between two monomials from the perspective of truncated moment theory. A theorem of Stochel \cite{Sto01} shows that solvability of all truncations of a full moment sequence implies solvability of the corresponding full moment problem. Thus, if every truncation of a given sequence has a representing measure supported on a fixed algebraic set, then the full sequence also has a representing measure supported on that set. More general extension criteria for positive definite mappings
on symmetric subsets of $*$-semigroups, including applications to
truncated moment problems, were established in \cite{css11}. For
another application of moment methods in operator theory, namely a
two-moment characterization of spectral measures on the real line,
see \cite{PS23}.

In \cite{CF08}, Curto and Fialkow introduced a strengthening of the type~A property for truncated moment sequences, namely the extension property $(S_{n,k})$; see \eqref{def-S-n-k-property} below. In this framework, the generating families associated with a line, a circle, or an ellipse satisfy $(S_{n,1})$, whereas those associated with a parabola or a hyperbola satisfy $(S_{n,2})$. Moreover, Fialkow proved in \cite{f1} that suitable generating families for curves of the forms $y=q(x)$ and $yq(x)=1$ satisfy $(S_{n,k})$ for sufficiently large $k$. Improved bounds on $k$ were subsequently obtained in \cite{z4}. Property $(S_{n,k})$ also has important consequences in real algebraic geometry: it yields degree-bounded weighted sums-of-squares representations for polynomials that are strictly positive on the underlying semialgebraic set.

In this paper, we investigate property \((S_{n,k})\) for the same
families of monomial plane curves that appear in Bisgaard's
classification of type~A curves. More precisely, we consider
polynomials \(p\) that are monomial multiples of binomials and
determine when the associated generator family \(\{1,p,-p\}\)
satisfies property \((S_{n,k})\) for some finite \(k\). Our results,
together with the previously known cases, yield the classification
summarized in Table~\ref{tab:classification}. The table records
whether a finite extension parameter exists, gives an admissible value
of \(k\) in each affirmative case, and indicates the result in which
the corresponding assertion is established.  

In the negative cases, our constructions also produce explicit
polynomials that are nonnegative on the underlying curves but are not
sums of squares in the corresponding coordinate rings. In the
affirmative cases, the extension bounds yield degree-bounded
positivity certificates. More precisely, if \(\{1,p,-p\}\) satisfies
property \((S_{n,k})\), then every polynomial
\(f\in\mathcal P_{2n}\) that is strictly positive on \(\cZ_p\)
admits a representation
$
        f=\sum_{j=1}^r g_j^2+ph,
$
where \(g_1,\ldots,g_r,h\in\mathbb R[x,y]\) and
$\deg g_j^2\le2(n+k)$,
$\deg(ph)\le2(n+k)$.
This follows from \cite[Theorem~1.5(i)]{CF08}; explicit versions are
given in Corollaries~\ref{cor-after-4-1},
\ref{cor-after-4-4}, and~\ref{cor-after-4-6}. Thus, our results both
extend the theory of truncated moment problems on monomial curves and
provide explicit information about sums-of-squares representations,
including obstructions in the negative cases and degree bounds in the
affirmative cases.

\begin{table}[ht] \centering \caption{Classification of the generator families $\{1,p,-p\}$ with respect to property $(S_{n,k})$, where $n\geq\deg p$. Unless stated otherwise, $\ell\in \NN$. In the rows involving $\ell_1,\ell_2$, we assume $\ell_1,\ell_2\in\mathbb{N}\setminus\{1\}$ and $\gcd(\ell_1,\ell_2)=1$. }\label{tab:classification} \small \renewcommand{\arraystretch}{1.4} \setlength{\tabcolsep}{2pt} \begin{tabularx}{\textwidth}{ @{} >{\raggedright\arraybackslash}p{6.0cm} @{\hspace{5pt}} >{\centering\arraybackslash}p{1.15cm} @{\hspace{5pt}} >{\centering\arraybackslash}X @{\hspace{5pt}} >{\raggedright\arraybackslash}p{3.1cm} @{} } \toprule \textbf{Defining polynomial \(p(x,y)\)} & \textbf{Finite \(k\)} & \textbf{Admissible values of \(k\)} & \textbf{Result} \\ \midrule

\addlinespace[0.25em]

\(x-y^\ell\) or \(y-x^\ell\)
&
Yes
&
\(\displaystyle k\ge\max\{\ell-1,1\}\)
&
\cite[Th.~3.1]{z4}
\\
\addlinespace[0.35em]

\(y(x-y^\ell)\) or \(x(y-x^\ell)\)
&
Yes
&
\(\displaystyle k\ge\ell+\max\{\ell-1,2\}\)
&
Th.~\ref{theo:case-5}
\\
\addlinespace[0.35em]

\(y(y-x^\ell)\) or \(x(x-y^\ell)\),\; $\ell\geq 2$
&
No
&
\textemdash
&
Th.~\ref{theo:case-6}
\\
\addlinespace[0.35em]

\(xy(y-x^\ell)\) or \(xy(x-y^\ell)\),\; $\ell\geq 2$
&
No
&
\textemdash
&
Th.~\ref{theo:case-7}
\\

\midrule

\addlinespace[0.25em]

\(\begin{gathered}
q(x,y)(x^{\ell_1}-y^{\ell_2}),\\[-0.1em]
q\in\{1,x,y,xy\}
\end{gathered}\)
&
No
&
\textemdash
&
Ths.~\ref{theo:case-1}, \ref{theo:case-1.2}, and~\ref{theo:case-1.3}\\

\midrule

\addlinespace[0.25em]

\(xy^\ell-1\) or \(x^\ell y-1\)
&
Yes
&
\(\displaystyle k\ge\ell+1\)
&
\cite[Th.~4.1]{z4}
\\
\addlinespace[0.35em]

\(x^{\ell_1}y^{\ell_2}-1\)
&
Yes
&
\(\displaystyle
k\ge
\ell_1+\ell_2-1\)
&
Th.~\ref{theo:case-2}
\\
\addlinespace[0.35em]

\(\begin{gathered}
q(x,y)(x^{\ell_1}y^{\ell_2}-1),\\[-0.1em]
q\in\{x,y,xy\}
\end{gathered}\)
&
Yes
&
\(\displaystyle
k\ge
2(\ell_1+\ell_2)\)
&
Ths.~\ref{theo:case-3}, \ref{theo:case-4}
\\

\bottomrule
\end{tabularx}
\end{table}

\begin{remark}
\label{rem:classification-observations}
We record several observations concerning the classification in
Table~\ref{tab:classification}.
\begin{enumerate}[leftmargin=*]
\item
For a fixed \(n\in\NN\), the existence of a finite
\(k\in\NN_0\) such that \(\{1,p,-p\}\) satisfies property
\((S_{n,k})\) depends only on the zero set \(\cZ_p\), rather than on
the particular polynomial used to define it; see
Remark~\ref{rem:powers-irrelevant}. This explains why we restrict our
attention to defining polynomials in which every irreducible factor
occurs with multiplicity one.

\item
The assumption \(n\ge\deg p\) ensures that the column \(p(X,Y)\)
occurs in \(M(n)\). If a positive extension satisfies the localizing
conditions associated with \(p\) and \(-p\), then
$
        p(X,Y)=0,
$
together with all column relations obtained from this identity by
recursive generation whenever the relevant products are defined. If
\(n<\deg p\), then the column \(p(X,Y)\) does not occur in \(M(n)\),
and the corresponding cases require a separate analysis. We therefore
restrict our attention to the principal setting \(n\ge\deg p\).
\end{enumerate}
\end{remark}

\subsection{Reader's guide}

The paper is organized as follows. In
Section~\ref{sec:preliminaries}, we introduce the necessary notation
and recall the basic facts about truncated moment problems, moment and
localizing matrices, and property \((S_{n,k})\).

In Section~\ref{sec:nonSnk}, we treat the negative cases summarized
in Table~\ref{tab:classification}. For each of these
families, we construct truncated sequences having extension
properties of every finite order but admitting no representing
measure on the corresponding algebraic set. These constructions also
yield explicit polynomials that are nonnegative on the curves but are
not sums of squares in their coordinate rings.

In Section~\ref{sec:Snk}, we treat the affirmative cases summarized
in Table~\ref{tab:classification}. We establish property
\((S_{n,k})\) for the displayed admissible extension parameters by
reducing the corresponding truncated moment problems to
one-dimensional moment problems. As a consequence, we obtain explicit
degree bounds for sums-of-squares representations of polynomials that
are strictly positive on the curves.

\section{Preliminaries}
\label{sec:preliminaries}

In this section, we recall the basic concepts and results from the theory of truncated moment problems that will be used throughout the paper, including moment and localizing matrices, recursively generated moment matrices, and property \((S_{n,k})\).\\

Let $\NN := \{1, 2, 3, \ldots\}$ and $\NN_0 := \NN \cup \{0\}$. For $\alpha:=(\alpha_1,\ldots,\alpha_d)\in \NN_0^d$, we write $|\alpha|:=\alpha_1+\cdots+\alpha_d$. 
Let 
\[
\beta \equiv \beta^{(2n)} = \{\beta_\alpha\}_{\alpha\in \NN_0^d,\; |\alpha|\le 2n}
\]
be a truncated multivariate sequence of degree $2n$ in $d$ real variables, and let
\[
\mathcal P_n 
:=\{p \in \mathbb{R}[x_1,\dots,x_d] : \deg p \le n\}.
\]
The associated Riesz functional $L_\beta : \mathcal P_{2n} \to \mathbb{R}$ is defined by
\[
L_\beta\Big(\sum_{|\alpha|\le 2n} a_\alpha x^\alpha\Big)
:= \sum_{|\alpha|\le 2n} a_\alpha \beta_\alpha.
\]
Let \[ \mathcal B_n := \{x^\alpha:\alpha\in\mathbb N_0^d,\ |\alpha|\le n\} \] denote the monomial basis of \(\mathcal P_n\), ordered first by total degree and then lexicographically within each degree.

The moment matrix $M(n) \equiv M(n;\beta)$ is the symmetric matrix indexed by $\mathcal B_n \times \mathcal B_n$ with entries
\[
M(n)_{x^\alpha,x^{\alpha'}} := L_\beta(x^\alpha x^{\alpha'}), 
\qquad |\alpha|,|\alpha'|\le n.
\]
We write 
\[
\operatorname{Col} M(n)
:= \{M(n)v : v \in \mathbb{R}^{\dim \mathcal P_n}\}
\subseteq \mathbb{R}^{\dim \mathcal P_n}
\]
and refer to it as the \textbf{column space} of $M(n)$.

For every \(x^\alpha\in\mathcal B_n\), let \(X^\alpha\) denote the column of \(M(n)\) indexed by \(x^\alpha\). If $p(x)=\sum_{|\alpha|\le n}a_\alpha x^\alpha\in\mathcal P_n,$ we define 
$$
    p(X):=\sum_{|\alpha|\le n}a_\alpha X^\alpha \in\operatorname{Col}M(n). 
$$

The moment matrix $M(n)$ is said to be \textbf{recursively generated} if
whenever $p,q,pq \in \mathcal P_n$ and $p(X)=0$ in $\operatorname{Col} M(n)$, then $(pq)(X)=0$ in $\operatorname{Col} M(n)$.

Let \(q\in\mathbb R[x_1,\ldots,x_d]\) be a polynomial of degree
\(\delta\le 2n\). The \textbf{localizing matrix} associated with \(q\),
denoted by \(M_q(n)\equiv M_q(n;\beta)\), is the symmetric matrix
indexed by \(\mathcal B_{n-\left\lceil\frac{\delta}{2}\right\rceil}
\times\mathcal B_{n-\left\lceil\frac{\delta}{2}\right\rceil}\) with entries
\[
M_q(n)_{x^\alpha,x^{\alpha'}}
:=
L_\beta\!\left(qx^{\alpha+\alpha'}\right),
\qquad
|\alpha|,|\alpha'|
\le n-\left\lceil\frac{\delta}{2}\right\rceil.
\]

Let
\[
Q := \{ q_0 := 1, q_1, \dots, q_m \} \subset \mathbb{R}[x_1,\dots,x_d]
\]
and define the semialgebraic set
\[
K_Q := \{ x \in \mathbb{R}^d : q_i(x) \ge 0,\ i = 0,1,\dots,m \}.
\]

Let $k\in \NN_0$. We say that a moment matrix \(M(n+k)\) is a \textbf{positive extension} of \(M(n)\) if \(M(n+k)\succeq0\) and its principal submatrix indexed by \(\mathcal B_n\) coincides with \(M(n)\). Following Curto and Fialkow \cite{CF08}, we say that $Q$ satisfies \textbf{property $(S_{n,k})$} if, for every truncated sequence \(\beta^{(2n)}\), the following equivalence holds:
\begin{equation}
\label{def-S-n-k-property}
\beta^{(2n)} \text{ has a } K_Q\text{--representing measure}
\;\Longleftrightarrow\;
\begin{aligned}
& M(n) \text{ admits a positive extension } M(n+k),\\
& \text{such that } M_{q_i}(n+k) \succeq 0,\quad i=1,\dots,m.
\end{aligned}
\end{equation}
If the right-hand side of \eqref{def-S-n-k-property} holds, we say that $\beta^{(2n)}$ has the \textbf{$k$–extension property relative to $Q$}.\\

\begin{remark}
{
The validity of property \((S_{n,k})\) may depend on the particular choice of generators \(Q=\{1,q_1,\ldots,q_m\}\) defining \(K_Q\), and not only on the semialgebraic set \(K_Q\) itself. We therefore attribute property \((S_{n,k})\) to \(Q\) throughout the paper.
}
\end{remark}

For full positive functionals, the property that vanishing on the principal ideal generated by a defining polynomial depends only on its real zero set is an immediate consequence of the fact that the annihilator ideal of a positive functional is real; see \cite[Lemmas~2.4 and~2.5]{bs16}. More precisely, if \(p,q\in\mathbb R[x_1,\ldots,x_d]\) satisfy \(\cZ_p=\cZ_q\), then a full positive functional vanishes on \((p)\) if and only if it vanishes on \((q)\). The analogous statement in the truncated setting requires additional care. The truncated annihilator is defined only up to a fixed degree and need not be an ideal. We therefore first establish a finite-degree real-radical transfer principle. Its proof uses Real Nullstellensatz certificates and enlarges the truncation order so that all the polynomials occurring in these certificates lie in the domain of the extended Riesz functional.

\begin{lemma}
\label{lem:finite-degree-radical-transfer}
Let \(a,b\in\mathbb R[x_1,\ldots,x_d]\) satisfy
\(a\in I(\cZ_b)\), and let \(r\in\mathbb N_0\). Then there exists
\(N_0\in\mathbb N\) such that, for every \(N\ge N_0\) and every
linear functional \(L:\mathcal P_{2N}\to\mathbb R\) that is
nonnegative on squares and satisfies
\[
        L(bfg)=0
        \qquad
        \text{for all }
        f,g\in
        \mathcal P_{
        N-\left\lceil\frac{\deg b}{2}\right\rceil},
\]
one has 
\[
        L(auv)=0
        \qquad
        \text{for all }
        u,v\in
        \mathcal P_{r}.
\]
\end{lemma}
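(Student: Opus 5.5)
Our plan is to invoke the Real Nullstellensatz and then use the standard fact that a positive functional is Cauchy--Schwarz controlled on squares. Since \(a\in I(\cZ_b)\), the real radical of \((b)\) contains \(a\). By the Real Nullstellensatz there are \(m\in\mathbb N\), polynomials \(s_1,\ldots,s_t\) and \(h\) such that
\[
        a^{2m}+\sum_{i=1}^t s_i^2 = b\,h .
\]
We fix such a certificate once and for all. The number \(N_0\) will be chosen so large that all polynomials appearing in the following argument, multiplied by \(u\) or \(v\) from \(\mathcal P_r\) and their squares, lie in \(\mathcal P_{2N}\), and so that \(h\) times suitable multipliers can be written as a sum of products \(fg\) with \(f,g\in\mathcal P_{N-\lceil \deg b/2\rceil}\).

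\textbf{Step 1: eliminating the certificate.} Let \(w\in\mathcal P_{R}\) be arbitrary, where \(R\) is a degree bound to be fixed later. Multiply the certificate by \(w^2\) and apply \(L\). Write \(h w^2\) as a linear combination of products \(f g\) of monomials of degree at most \(N-\lceil\deg b/2\rceil\); this is possible once \(N\) is large. The hypothesis then gives \(L(bhw^2)=0\). Hence
\[
L\bigl((a^m w)^2\bigr)+\sum_i L\bigl((s_iw)^2\bigr)=0 .
\]
All terms are nonnegative, so \(L((a^mw)^2)=0\).

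\textbf{Step 2: lowering the exponent.} We use the truncated Cauchy--Schwarz inequality \(L(fg)^2\le L(f^2)L(g^2)\), valid for \(f,g\in\mathcal P_N\). It implies that \(L(f^2)=0\) forces \(L(fg)=0\) for every \(g\in\mathcal P_N\).

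Suppose \(L((a^jw)^2)=0\) for all \(w\) of degree at most \(R_j\). Choosing \(f=a^jw\) and \(g=a^{j-2}w\) (for \(j\ge 2\)) gives \(L(a^{2j-2}w^2)=0\), that is, \(L((a^{j-1}w)^2)=0\), with a degree budget that is only slightly smaller. Iterating from \(j=m\) down to \(j=1\) yields \(L((aw)^2)=0\) for all \(w\in\mathcal P_r\). The case where the exponent is odd, with \(f=a^jw\) and \(g=a^{j-1}w\), is handled similarly, using \(L(a^{2j-1}w^2)\) and Cauchy--Schwarz once more.

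\textbf{Step 3: the final identity.} With \(f=au\) and \(g=v\) for \(u,v\in\mathcal P_r\), the inequality gives \(L(auv)^2\le L(a^2u^2)L(v^2)=0\). This is exactly the claim.

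The main obstacle is the degree bookkeeping. Every Cauchy--Schwarz application needs both factors in \(\mathcal P_N\), and Step 1 needs \(hw^2\) to be expressible through the allowed pairs \(f,g\). We therefore take \(R\) to be roughly \(r+m\deg a\), and choose \(N_0\) as the maximum of \(\deg(a^m)+R\), \(\max_i\deg s_i+R\), and \(\lceil\deg b/2\rceil+\lceil(\deg h+2R)/2\rceil\). For such \(N\), every monomial of \(hw^2\) factors as a product of two monomials of degree at most \(N-\lceil\deg b/2\rceil\). The descent in Step 2 only needs to reach degree \(r\) at the end.
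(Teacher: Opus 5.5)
Your proof is correct, but it is organized differently from the paper's.

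\textbf{The paper's route.} The paper applies the Real Nullstellensatz separately to each product $av^2$, where $v$ ranges over the finite set $\mathcal U$ of monomials of $\mathcal P_r$ and their pairwise sums. It pads each certificate so that the exponent is a power of two, $(av^2)^{2^{\ell_v}}$, and obtains $L\bigl((av^2)^{2^{\ell_v}}\bigr)=0$. It then descends using only the one-sided inequality $|L(g)|^2\le L(g^2)L(1)$. That inequality can only halve exponents, which is why the power-of-two adjustment is needed. Finally, it recovers $L(au_iu_j)=0$ by polarization.

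\textbf{Your route.} You fix a single certificate $a^{2m}+\sum s_i^2=bh$ and localize it by multiplying with $w^2$. This uses that $L(bh')=0$ for every $h'$ of degree at most $2(N-\lceil \deg b/2\rceil)$. You then lower the exponent of $a$ one step at a time via the bilinear inequality $L(fg)^2\le L(f^2)L(g^2)$ with $f=a^jw$ and $g=a^{j-2}w$. Your last application, with $f=au$ and $g=v$, gives $L(auv)=0$ directly, so no polarization over a spanning set is needed.

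\textbf{What each buys.} You need only one Nullstellensatz certificate, and you get an explicit $N_0$ that grows linearly in $r$. The paper's $N_0$ must dominate the degrees of a separate certificate for every $v\in\mathcal U$. In exchange, the paper never multiplies a certificate by anything and uses Cauchy--Schwarz only against $1$.

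\textbf{Two minor points.} First, the step $j\to j-1$ works for every $j\ge 2$ regardless of parity, so your separate treatment of an odd exponent is unnecessary. Second, if you keep $w=u\in\mathcal P_r$ fixed from Step~1 through Step~3, the degree budget does not shrink during the descent. So $R=r$ already suffices, and your larger choice $R\approx r+m\deg a$ only enlarges $N_0$ harmlessly.
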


\begin{proof}
Let \(\mathcal B_r=\{u_1,\ldots,u_m\}\) be the monomial basis of
\(\mathcal P_r\), and set
\[
        \mathcal U
        :=
        \{u_i:1\le i\le m\}
        \cup
        \{u_i+u_j:1\le i<j\le m\}.
\]
For every \(v\in\mathcal U\), the inclusion \(a\in I(\cZ_b)\)
implies \(av^2\in I(\cZ_b)\). By the Real Nullstellensatz
\cite[p.~26]{Mar08}, there exist \(M_v\in\mathbb N\) and polynomials
\(s_{v,1},\ldots,s_{v,m_v}\) such that
\[
        (av^2)^{2M_v}
        +
        \sum_{j=1}^{m_v}s_{v,j}^2
        \in(b).
\]
Choose \(\ell_v\in\mathbb N\) such that
\(2^{\ell_v}\ge2M_v\), and set
\(q_v:=2^{\ell_v-1}-M_v\in\mathbb N_0\). Multiplying the preceding
certificate by \((av^2)^{2q_v}\), and then renaming the resulting
polynomials, gives
\begin{equation}
\label{eq:finite-radical-certificate}
        (av^2)^{2^{\ell_v}}
        +
        \sum_{j=1}^{m_v}s_{v,j}^2
        =
        bh_v.
\end{equation}

Since \(\mathcal U\) is finite, we may choose \(N_0\) sufficiently
large that, for every \(v\in\mathcal U\) and
\(1\le j\le m_v\),
\begin{equation}\label{eq:N0-radical-transfer} 2^{\ell_v-1}\deg(av^2)\le N_0,\qquad \deg s_{v,j}\le N_0,\qquad \deg h_v\le 2\left(N_0-\left\lceil\frac{\deg b}{2}\right\rceil\right). 
\end{equation}

We also require \(\deg(auv)\le2N_0\) for all
\(u,v\in\mathcal P_r\).

Fix \(N\ge N_0\), let \(L\) satisfy the assumptions, and set
\(r_b:=N-\lceil\deg b/2\rceil\). Since every monomial of degree at
most \(2r_b\) is a product of two monomials of degree at most \(r_b\),
linearity gives \(L(bh)=0\) for every \(h\in\mathcal P_{2r_b}\).
In particular, \eqref{eq:N0-radical-transfer} implies
\(L(bh_v)=0\) for every \(v\in\mathcal U\).

Applying \(L\) to \eqref{eq:finite-radical-certificate}, we obtain
\[
        L\bigl((av^2)^{2^{\ell_v}}\bigr)
        +
        \sum_{j=1}^{m_v}L(s_{v,j}^2)
        =
        0.
\]
Every term on the left is nonnegative, and hence
\(L((av^2)^{2^{\ell_v}})=0\).

Positivity of \(L\) gives the Cauchy--Schwarz inequality
\[
        |L(g)|^2\le L(g^2)L(1)
        \qquad
        (g\in\mathcal P_N).
\]
Applying it successively to
\[
        (av^2)^{2^{\ell_v-1}},
        \ (av^2)^{2^{\ell_v-2}},
        \ldots,
        av^2
\]
gives \(L(av^2)=0\) for every \(v\in\mathcal U\). In particular,
\[
        L(au_i^2)=0,
        \qquad
        L\bigl(a(u_i+u_j)^2\bigr)=0
        \quad
        (1\le i<j\le m).
\]
Polarization therefore gives
\[
        2L(au_iu_j)
        =
        L\bigl(a(u_i+u_j)^2\bigr)
        -
        L(au_i^2)
        -
        L(au_j^2)
        =
        0.
\]
Together with the diagonal identities, this shows that
\(L(au_iu_j)=0\) for all \(1\le i,j\le m\). The conclusion
\(L(auv)=0\) for arbitrary \(u,v\in\mathcal P_r\) now follows by
bilinearity.
\end{proof}

The lemma shows that, after passing to a sufficiently high positive
extension, the truncated annihilation of one defining polynomial
implies the truncated annihilation of every polynomial vanishing on
the same algebraic set. We now apply it to property
\((S_{n,k})\).

\begin{proposition}
\label{prop:independence-of-p}
Let \(p,q\in\mathbb R[x_1,\ldots,x_d]\) satisfy
$$
        \cZ_p=\cZ_q,
$$
and let
$
        n\ge\max\{\deg p,\deg q\}.
$
Then the following statements are equivalent:
\begin{enumerate}
\item
\label{S-n-k-pt1}
The generator family \(\{1,p,-p\}\) satisfies property
\((S_{n,k})\) for some \(k\in\mathbb N_0\).

\item
\label{S-n-k-pt2}
The generator family \(\{1,q,-q\}\) satisfies property
\((S_{n,\widetilde k})\) for some
\(\widetilde k\in\mathbb N_0\).
\end{enumerate}
\end{proposition}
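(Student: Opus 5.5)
By symmetry of the roles of \(p\) and \(q\), it suffices to prove \eqref{S-n-k-pt1}\(\Rightarrow\)\eqref{S-n-k-pt2}. Assume \(\{1,p,-p\}\) satisfies \((S_{n,k})\). We will show that \(\{1,q,-q\}\) satisfies \((S_{n,\widetilde k})\) with \(\widetilde k:=\max\{k,N_0-n\}\), where \(N_0\) comes from Lemma~\ref{lem:finite-degree-radical-transfer}. Since \(K_{\{1,p,-p\}}=\cZ_p=\cZ_q=K_{\{1,q,-q\}}\), the forward implication in \eqref{def-S-n-k-property} is routine. If \(\beta^{(2n)}\) has a \(\cZ_q\)-representing measure \(\mu\), we extend \(\beta\) by the moments of \(\mu\) up to degree \(2(n+\widetilde k)\); this is legitimate because a finitely atomic representing measure exists by Bayer--Teichmann. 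The resulting \(M(n+\widetilde k)\) is positive semidefinite. Moreover \(M_{\pm q}(n+\widetilde k)=0\), since \(q\) vanishes on \(\operatorname{supp}\mu\). Hence only the reverse implication needs work.

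For the reverse implication, let \(\beta^{(2n)}\) have the \(\widetilde k\)-extension property relative to \(\{1,q,-q\}\). Let \(N:=n+\widetilde k\), and let \(L\) be the Riesz functional of the extension \(M(N)\). Positivity \(M(N)\succeq0\) means \(L\) is nonnegative on squares of polynomials in \(\mathcal P_N\). The conditions \(M_q(N)\succeq0\) and \(M_{-q}(N)\succeq0\) together give \(M_q(N)=0\), that is, \(L(qfg)=0\) for all \(f,g\in\mathcal P_{N-\lceil \deg q/2\rceil}\). Since \(\cZ_p=\cZ_q\), we have \(p\in I(\cZ_q)\). We apply Lemma~\ref{lem:finite-degree-radical-transfer} with \(a=p\), \(b=q\), and \(r:=n+k-\lceil\deg p/2\rceil\); this fixes \(N_0\) and hence \(\widetilde k\). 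Because \(N\ge N_0\), the lemma yields \(L(puv)=0\) for all \(u,v\in\mathcal P_r\), which says \(M_p(n+k)=0\), and therefore \(M_{\pm p}(n+k)\succeq0\).

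The principal submatrix \(M(n+k)\) of \(M(N)\) is positive semidefinite, since \(k\le\widetilde k\), and it extends \(M(n)\). Thus \(\beta^{(2n)}\) has the \(k\)-extension property relative to \(\{1,p,-p\}\). By \((S_{n,k})\) for \(p\), it has a \(\cZ_p=\cZ_q\)-representing measure, which completes the reverse implication.

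The main obstacle is bookkeeping in the application of Lemma~\ref{lem:finite-degree-radical-transfer}. The lemma's \(N_0\) depends on \(r\), and \(r\) depends on \(k\); but \(k\) is fixed in advance, so there is no circularity in choosing \(N_0\) and then \(\widetilde k\ge N_0-n\). We must also check that the domains match: the lemma needs \(L\) defined on \(\mathcal P_{2N}\), and the restriction of the extended functional to \(\mathcal P_{2(n+k)}\) must be exactly the functional underlying \(M(n+k)\) and \(M_p(n+k)\). Both hold by construction. Note also that \(\widetilde k\) depends on \(n\) as well as \(k\), which is allowed because \(n\) is fixed in the statement.
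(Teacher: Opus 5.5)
Your proposal is correct and follows the paper's proof essentially step for step. Both apply Lemma~\ref{lem:finite-degree-radical-transfer} with \(a=p\), \(b=q\), \(r=n+k-\lceil \deg p/2\rceil\), choose \(\widetilde k\) with \(n+\widetilde k\ge\max\{n+k,N_0\}\), truncate the extension to order \(n+k\) to obtain the \(k\)-extension property relative to \(\{1,p,-p\}\), and use a finitely atomic measure for the easy direction; the only difference is that you cite Bayer--Teichmann where the paper cites Richter--Tchakaloff, which serves the same purpose.
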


\begin{proof}
By symmetry, it suffices to prove
\(\eqref{S-n-k-pt1}\Rightarrow\eqref{S-n-k-pt2}\).
Suppose that \(\{1,p,-p\}\) satisfies property \((S_{n,k})\) for
some \(k\in\mathbb N_0\), and set
\[
        r_p
        :=
        n+k-\left\lceil\frac{\deg p}{2}\right\rceil.
\]
Since \(\cZ_p=\cZ_q\), we have \(p\in I(\cZ_q)\). Apply
Lemma~\ref{lem:finite-degree-radical-transfer} with
\(a=p\), \(b=q\), and \(r=r_p\), and let \(N_0\) be the integer
provided by the lemma.

Choose \(\widetilde k\in\mathbb N_0\) sufficiently large that, with
\(\widetilde n:=n+\widetilde k\),
\[
        \widetilde n\ge\max\{n+k,N_0\}.
\]
We claim that \(\{1,q,-q\}\) satisfies property
\((S_{n,\widetilde k})\).

Let \(\beta^{(2n)}\) be a truncated sequence having the
\(\widetilde k\)-extension property relative to
\(\{1,q,-q\}\). Thus, \(M(n)\) admits a positive extension
\(M(\widetilde n)\) satisfying
\[
        M_q(\widetilde n)\succeq0,
        \qquad
        M_{-q}(\widetilde n)\succeq0.
\]
Since \(M_{-q}(\widetilde n)=-M_q(\widetilde n)\), these inequalities
are equivalent to \(M_q(\widetilde n)=0\).

Let \(L:=L_{\beta^{(2\widetilde n)}}\) be the Riesz functional
associated with this extension. The identity
\(M_q(\widetilde n)=0\) means that
\[
        L(qfg)=0
        \qquad
        \text{for all }
        f,g\in
        \mathcal P_{
        \widetilde n-
        \left\lceil\frac{\deg q}{2}\right\rceil}.
\]
Since \(\widetilde n\ge N_0\), Lemma \ref{lem:finite-degree-radical-transfer} gives
\[
        L(puv)=0
        \qquad
        \text{for all }u,v\in\mathcal P_{r_p}.
\]
Equivalently, \(M_p(n+k)=0\), and hence
\[
        M_p(n+k)\succeq0,
        \qquad
        M_{-p}(n+k)\succeq0.
\]

Since \(\widetilde n\ge n+k\), the truncation of
\(M(\widetilde n)\) to degree \(2(n+k)\) is a positive extension
\(M(n+k)\) of \(M(n)\). Therefore, \(\beta^{(2n)}\) has the
\(k\)-extension property relative to \(\{1,p,-p\}\). Since
\(\{1,p,-p\}\) satisfies property \((S_{n,k})\), the sequence
\(\beta^{(2n)}\) admits a \(\cZ_p\)-representing measure. Because
\(\cZ_p=\cZ_q\), the same measure is also a
\(\cZ_q\)-representing measure.

We have thus proved that the \(\widetilde k\)-extension property
relative to \(\{1,q,-q\}\) implies the existence of a
\(\cZ_q\)-representing measure.

Conversely, suppose that \(\beta^{(2n)}\) has a
\(\cZ_q\)-representing measure. By the Richter--Tchakaloff theorem
\cite{Ric57} (see also \cite[Theorem~1.24]{sch17}),
\(\beta^{(2n)}\) has a finitely atomic representing measure supported
on \(\cZ_q\). Such a measure has moments of every order and therefore
defines positive moment extensions of every order. Since its support
is contained in \(\cZ_q\), the corresponding localizing matrices
satisfy
$
        M_q(\widetilde n)
        =
        M_{-q}(\widetilde n)
        =
        0.
$
Thus, \(\beta^{(2n)}\) has the \(\widetilde k\)-extension property
relative to \(\{1,q,-q\}\).

It follows that \(\{1,q,-q\}\) satisfies property
\((S_{n,\widetilde k})\). This proves
\(\eqref{S-n-k-pt1}\Rightarrow\eqref{S-n-k-pt2}\).
The reverse implication follows by interchanging \(p\) and \(q\).
\end{proof}

{
\begin{remark}
\label{rem:powers-irrelevant}
As an immediate consequence of Proposition \ref{prop:independence-of-p}, for every \(r\in\mathbb N\) and every \(n\ge r\deg p\), the existence of some finite \(k\) such that \(\{1,p,-p\}\) satisfies property \((S_{n,k})\) is equivalent to the existence of some finite \(\widetilde k\) such that \(\{1,p^r,-p^r\}\) satisfies property \((S_{n,\widetilde k})\).
More generally, if
\[
        p=c\,p_1^{m_1}\cdots p_s^{m_s},
        \qquad c\in\mathbb R\setminus\{0\},
\]
where \(p_1,\ldots,p_s\) are irreducible polynomials and
\(m_1,\ldots,m_s\in\mathbb N\), then
$\cZ_p=\cZ_{p_1\cdots p_s}.$
Therefore, when studying whether there exists some finite \(k\) such that
\(\{1,p,-p\}\) satisfies \((S_{n,k})\), it is enough to assume that all
irreducible factors of \(p\) appear with multiplicity one.
Proposition \ref{prop:independence-of-p} preserves only the existence of some finite extension parameter; it does not imply that the same parameter works for \(p\) and its square-free part. Thus, the failure of property \((S_{n,k})\) for every \(k\in\mathbb N_0\) is preserved when passing between \(p\) and its square-free part, whereas a positive result may involve different finite extension parameters.
\end{remark}
}

The \textbf{$n$-th truncated quadratic module} generated by $Q$, denoted by $\Sigma_{Q,n} \subset \mathcal P_{2n}$, is defined as
\begin{equation} \label{def:quadratic-module} 
    \Sigma_{Q,n} := \left\{ \sum_{i=0}^m\sum_{j=1}^{N_i}q_i f_{ij}^2 : N_i\in\mathbb N_0,\ \deg(q_i f_{ij}^2)\le2n \right\}. 
\end{equation}

The following theorem relates the truncated quadratic modules
\(\Sigma_{Q,*}\) to property \((S_{n,k})\).

\begin{theorem}[{\cite[Theorems 1.5, 1.6]{CF08}}]
\label{thm:Psatz-bounds}
With the notation above:
\begin{enumerate}
\item[(i)]
If $Q$ satisfies $(S_{n,k})$, then every polynomial $p\in\mathcal P_{2n}$ that is strictly positive on $K_Q$ belongs to $\Sigma_{Q,n+k}$.

\item[(ii)]
If $k\ge 1$ and every polynomial $p\in\mathcal P_{2n+2}$ that is strictly positive on $K_Q$ belongs to $\Sigma_{Q,n+k}$, then $Q$ satisfies $(S_{n,k})$.

\item[(iii)]
If $K_Q$ is compact and every polynomial $p\in\mathcal P_{2n}$ that is strictly positive on $K_Q$ belongs to $\Sigma_{Q,n}$, then $Q$ satisfies $(S_{n,0})$.
\end{enumerate}
\end{theorem}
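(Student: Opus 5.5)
The plan is to prove the three parts of this result of Curto and Fialkow by convex duality between the truncated quadratic modules \(\Sigma_{Q,n+k}\) and the moment-type cones defined by the \(k\)-extension conditions. In all three parts the first task is to identify the dual cone. A functional \(L\) on \(\mathcal P_{2(n+k)}\) is nonnegative on \(\Sigma_{Q,n+k}\) if and only if \(M(n+k)\succeq0\) and \(M_{q_i}(n+k)\succeq0\) for every \(i\); this holds because \(L(q_if^2)=v^TM_{q_i}(n+k)v\), where \(v\) is the coefficient vector of \(f\), and \(\deg(q_if^2)\le 2(n+k)\) forces \(\deg f\le n+k-\lceil\deg q_i/2\rceil\). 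Consequently, a sequence \(\beta^{(2n)}\) has the \(k\)-extension property relative to \(Q\) exactly when \(L_\beta\) extends to a functional on \(\mathcal P_{2(n+k)}\) that is nonnegative on \(\Sigma_{Q,n+k}\).

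For (i), suppose \(p\in\mathcal P_{2n}\), \(p>0\) on \(K_Q\), and \(p\notin\Sigma_{Q,n+k}\). The cone \(\Sigma_{Q,n+k}\) is convex in the finite-dimensional space \(\mathcal P_{2(n+k)}\). If it is closed, separation gives a functional \(L\ge0\) on \(\Sigma_{Q,n+k}\) with \(L(p)<0\). Its restriction to \(\mathcal P_{2n}\) then has the \(k\)-extension property, so by \((S_{n,k})\) it has a \(K_Q\)-representing measure \(\mu\). This gives \(L(p)=\int p\,d\mu\ge0\), a contradiction. The main obstacle is closedness, which fails in general, for instance when \(K_Q\) has empty interior, as happens for \(\{1,p,-p\}\). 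I would therefore separate only weakly, obtaining \(L\not\equiv0\) with \(L\ge0\) on the closure and \(L(p)\le0\). I would then use strict positivity: when \(\mu\ne0\) we get \(\int p\,d\mu>0\), a contradiction. If \(\mu=0\), then \(L(1)=0\), and Cauchy--Schwarz kills \(L\) on \(\mathcal P_{n+k}\). To finish this degenerate case I would perturb \(L\), for example by \(L+\varepsilon L_{\delta_x}\) with \(x\in K_Q\), and argue that \(p-\varepsilon\) still lies outside the cone. If this perturbation argument does not go through, I would fall back on the argument of \cite{CF08} directly.

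For (ii), take \(\beta^{(2n)}\) with the \(k\)-extension property, and let \(L\) be the extended functional. Since \(k\ge1\), \(L\) is defined on \(\mathcal P_{2n+2}\), and every \(f\in\mathcal P_{2n+2}\) with \(f>0\) on \(K_Q\) lies in \(\Sigma_{Q,n+k}\), so \(L(f)\ge0\). The plan is to show that the restriction of \(L\) to \(\mathcal P_{2n+2}\) is strictly \(K_Q\)-positive up to the boundary, that is, it lies in the closure of the dual of the cone of polynomials that are positive on \(K_Q\). A truncated Riesz--Haviland theorem of Curto and Fialkow then applies: a functional that is nonnegative on \(K_Q\)-positive polynomials of degree \(2n+2\) has a restriction to \(\mathcal P_{2n}\) admitting a \(K_Q\)-representing measure. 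The extra two degrees are exactly what make this restriction argument work. The converse direction is immediate from Richter--Tchakaloff, as in the proof of Proposition~\ref{prop:independence-of-p}.

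For (iii), compactness of \(K_Q\) is used in two ways. The polynomial \(1\) is an interior point of the cone of \(K_Q\)-positive polynomials of degree \(2n\). Moreover, the truncated Riesz--Haviland theorem holds on compact sets without any degree shift, because the cone of nonnegative polynomials is the closure of the cone of strictly positive ones and a limiting-measure argument converges by compactness. The same reasoning as in (ii) then applies at degree \(2n\) itself.
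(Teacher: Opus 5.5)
The paper does not prove this theorem; it quotes it from \cite{CF08}. Your arguments for (ii) and (iii) are sound. The step you phrase vaguely in (ii) is just $L(f)=\lim_{\varepsilon\downarrow0}L(f+\varepsilon)\ge0$ for every $f\ge0$ on $K_Q$. After that, the truncated Riesz--Haviland theorem of \cite{CF08} at degree $2n+2$ gives the measure in (ii). In (iii) its compact version does the same job: the moment cone of a compact set is closed, being the conic hull of a compact set of evaluation vectors whose first coordinate is $1$. Your argument for (i) is also complete when $k=0$, because then a nonzero separating $L$ on $\mathcal P_{2n}$ cannot satisfy $L|_{\mathcal P_{2n}}=0$.

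For $k\ge1$, part (i) has a genuine gap, exactly in the case you call degenerate.

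\emph{Why the degenerate case is real.} If $p\notin\Sigma_{Q,n+k}$ but $p\in\overline{\Sigma_{Q,n+k}}$, then every separating functional has $L(p)=0$. In that case $(S_{n,k})$ only yields $L|_{\mathcal P_{2n}}=0$, which is no contradiction. Nonzero elements of $\Sigma_{Q,n+k}^{*}$ vanishing on $\mathcal P_{2n}$ do exist: for $Q=\{1,y,-y\}$, take $L(x^{2n+2k})=1$ and all other moments $0$.

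\emph{Why the perturbation fails.} We have $(L+\varepsilon L_{\delta_x})(p)=\varepsilon p(x)>0$, so the perturbed functional no longer separates. The statement $p-\varepsilon\notin\Sigma_{Q,n+k}$ is true but useless. Strict separation of $p-\varepsilon$ contradicts $(S_{n,k})$ only if $p\ge\varepsilon$ on $K_Q$. This fails on unbounded $K_Q$ with $\inf_{K_Q}p=0$, for example $p=x^2$ on $\cZ_{xy-1}$. Moreover, $p-\varepsilon$ may itself lie in the closure.

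\emph{The structural obstruction.} The right-hand side of $(S_{n,k})$ says precisely that the extension lies in $\Sigma_{Q,n+k}^{*}=\bigl(\overline{\Sigma_{Q,n+k}}\bigr)^{*}$. So the hypothesis cannot distinguish the truncated module from its closure. Separation plus $(S_{n,k})$ proves exactly that every $f\in\mathcal P_{2n}$ with $f\ge0$ on $K_Q$ lies in $\overline{\Sigma_{Q,n+k}}$, and nothing more.

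\emph{What is missing.} You need an extra ingredient that rules out $p\in\overline{\Sigma_{Q,n+k}}\setminus\Sigma_{Q,n+k}$. One option is closedness of $\Sigma_{Q,n+k}$. This is standard when $K_Q$ has nonempty interior, via a functional all of whose moment and localizing matrices are positive definite. It is not available for $Q=\{1,p,-p\}$, since $M_{-p}=-M_p$. Once closedness holds, your argument works for all nonnegative $p$, so strict positivity is not what closes the gap. As written, deferring this case to \cite{CF08} leaves (i) unproved for $k\ge1$.
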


Several generator families for which property \((S_{n,k})\) is known to hold are summarized in Table~\ref{tab:known-cases}. In addition, for \(d=1\), the generator family \(Q=\{1\}\), for which \(K_Q=\mathbb R\), satisfies property \((S_{n,1})\) for every \(n\in\NN\) \cite[Th.~3.9]{CF91}.

\begin{table}[ht]
\centering
\caption{Known cases in which \(Q=\{1,p,-p\}\) satisfies property
\((S_{n,k})\), where \(n\ge\deg p\).}
\label{tab:known-cases}
\renewcommand{\arraystretch}{1.15}
\setlength{\tabcolsep}{3pt}
\begin{tabularx}{\textwidth}{
    @{}
    >{\raggedright\arraybackslash}X
    >{\centering\arraybackslash}p{3cm}
    >{\raggedright\arraybackslash}p{4cm}
    @{}
}
\toprule
Polynomial \(p\) & Extension parameter \(k\) & Reference \\
\midrule

\(p(x)=(x-a)(b-x)\), \(a<b\)
&
\(0\)
&
\cite[Th.~II.2.3]{KN77}
\\
\addlinespace[0.35em]

\(p(x,y)=1-x^2-y^2\)
&
\(1\)
&
\cite[Prop.~3.10]{CF08}
\\
\addlinespace[0.35em]

\(p(x,y)=ax+by+c\), \((a,b)\ne(0,0)\)
&
\(1\)
&
\cite[Prop.~3.11]{CF08}
\\
\addlinespace[0.35em]

\(p(x,y)\), \(\deg p=2\), defining an ellipse
&
\(1\)
&
\cite[Prop.~3.13]{CF08}
\\
\addlinespace[0.35em]

\(p(x,y)\), \(\deg p=2\), defining a parabola or a hyperbola
&
\(2\)
&
\cite[Prop.~3.13]{CF08}
\\
\addlinespace[0.35em]

\(p(x,y)=yq(x)-1\), \(\deg q\ge1\)
&
\(k_1\)
&
\cite[Th.~4.1]{f1}
\\
\addlinespace[0.35em]

\(p(x,y)=yx^\ell-1\), \(\ell\ge1\)
&
\(\ell+1\)
&
\cite[Th.~4.1]{z4}
\\
\addlinespace[0.35em]

\(p(x,y)=y-q(x)\), \(\deg q\ge3\)
&
\(\deg q-1\)
&
\cite[Th.~3.1]{z4}
\\

\bottomrule
\end{tabularx}

\medskip
\parbox{\textwidth}{\small
\(k_1:=(2n+2)(2+\deg q)-(n+1+\deg q)\).
}
\end{table}

\section{Generator families \(\{1,p,-p\}\) failing property \((S_{n,k})\)}
\label{sec:nonSnk} 

In this section, we study plane curves for which property \((S_{n,k})\) fails for every finite \(k\). We begin with curves defined by \(x^{\ell_1}-y^{\ell_2}\) and their monomial multiples (Theorems~\ref{theo:case-1}, \ref{theo:case-1.2}, and \ref{theo:case-1.3}), for which we construct explicit sequences exhibiting this failure. As a consequence, we obtain polynomials that are nonnegative on the corresponding curves but are not sums of squares in their coordinate rings; see Corollaries~\ref{pos-not-sos-v2} and \ref{cor-after-3-3-v2}. We then consider the asymmetric families \(p(x,y)=y(y-x^\ell)\) and \(p(x,y)=xy(y-x^\ell)\) (Theorems~\ref{theo:case-6} and \ref{theo:case-7}) and show that the same obstruction persists. The corresponding nonnegative polynomials that are not sums of squares are given in Corollary~\ref{cor-after-3-7-v2}.

\begin{theorem}
\label{theo:case-1}
Let \(\ell_1,\ell_2\in\NN\setminus\{1\}\) be relatively prime, and
let
\[
        p(x,y):=x^{\ell_1}-y^{\ell_2}.
\]
Then, for every \(n\ge\max\{\ell_1,\ell_2\}\), there exists a
truncated sequence \(\beta^{(2n)}\) having the
\(k\)-extension property relative to \(\{1,p,-p\}\) for every
\(k\in\NN\), but admitting no \(\cZ_p\)-representing measure.
Consequently, \(\{1,p,-p\}\) does not satisfy property
\((S_{n,k})\) for any \(k\in\NN_0\).
\end{theorem}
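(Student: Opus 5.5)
Throughout, I use the parametrization of the curve. Since $\gcd(\ell_1,\ell_2)=1$, the map $t\mapsto(t^{\ell_2},t^{\ell_1})$ is a bijection $\RR\to\cZ_p$. Moreover, the coordinate ring $\RR[x,y]/(p)$ is isomorphic to the semigroup algebra
\[
A:=\operatorname{span}\{t^s:s\in S\},\qquad S:=\ell_2\NN_0+\ell_1\NN_0\subset \NN_0 .
\]
A truncated sequence $\beta^{(2n)}$ whose moment matrix extensions annihilate $p$ corresponds to a functional $L$ on the part of $A$ of bounded degree. Here $x^iy^j\mapsto t^{i\ell_2+j\ell_1}$, so only the moments $L(t^s)$ with $s\in S$ are ever prescribed. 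The condition $M_p(n+k)=0$ together with $M(n+k)\succeq0$ says exactly two things: $L$ is well defined on $A$ up to degree $2(n+k)$, and $L(g^2)\ge0$ for all $g\in A$ of degree at most $n+k$.

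The plan is therefore to construct a single linear functional $L:A\to\RR$ with three properties: it is nonnegative on squares of all elements of $A$, its restriction to degree $2n$ is not given by a measure on $\RR$ pushed to $\cZ_p$, and the truncations to every $2(n+k)$ give the required positive extensions. The resulting sequence $\beta^{(2n)}$ then has the $k$-extension property for every $k$ but no $\cZ_p$-representing measure. The last assertion of the theorem then follows immediately, since $(S_{n,k})$ requires the converse implication in \eqref{def-S-n-k-property}.

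The obstruction comes from the gaps of $S$. Let $m:=\min\{\ell_1,\ell_2\}\ge2$. Every $g\in A$ has the form $g=a+bt^m+(\text{higher terms})$ with no $t$-term, so if $g(0)=0$ then $g^2$ vanishes to order $\ge 2m$ at $0$. I will take a polynomial
\[
P(t)=t^{j_0}(t-c)^2\cdots\in A,\qquad \deg P\le 2n\text{ in }(x,y),
\]
that is nonnegative on $\RR$ and has a zero at $t=0$ of order $j_0<2m$ with $j_0\in S$. For example, $P=t^{2m}$-free choices such as $t^{\ell_2}(t^{\ell_1}-c)^2$-type expressions, adjusted so that the lowest-order term is $t^{j_0}$ with $j_0\in\{m,\ldots,2m-1\}$. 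The argument above shows that $P$ is not a sum of squares in $A$, which is the statement behind the companion corollary.

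I will then define
\[
L(f):=\int f\,d\mu+w\,f(0)-\varepsilon\,f_{j_0},
\]
where $f_{j_0}$ denotes the coefficient of $t^{j_0}$, $\mu$ is a measure chosen so that $\int P\,d\mu$ is small, and $w>0$. This choice gives $L(P)<0$: the correction term contributes $-\varepsilon P_{j_0}$, while $P(0)=0$ kills the $w$-term. Consequently no representing measure exists. If $L$ had a $\cZ_p$-representing measure $\nu$, pulling $\nu$ back to $\RR$ would give $L(P)=\int P\,d\nu\ge0$, a contradiction. This step only uses moments of degree at most $2n$, which is why $n\ge\max\{\ell_1,\ell_2\}$ is needed so that $P$ fits.

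The main obstacle is proving $L(g^2)\ge0$ for every $g\in A$ of arbitrary degree, uniformly in $k$. For $g=a+bt^m+\dots$, the coefficient $(g^2)_{j_0}$ equals $2a\,g_{j_0}$ plus products of coefficients of $g$ at exponents in $S\cap[m,j_0-m]$. I must dominate $\varepsilon|(g^2)_{j_0}|$ by $wa^2+\int g^2\,d\mu$. Coefficient functionals are not $L^2(\mu)$-bounded on polynomials of unbounded degree for compactly supported $\mu$, so I will first try a $\mu$ for which they are. One candidate is a measure with an absolutely continuous part near $0$ and rapidly growing density weights, or a Gaussian-type measure, for which $|g_s|\le C_s\|g\|_{L^2(\mu)}$ holds for each fixed $s$ uniformly in degree. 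If such bounds hold, then choosing $\varepsilon$ small relative to $w$ and the constants $C_s$ closes the estimate by Cauchy--Schwarz, via $2|a g_{j_0}|\le \delta a^2+\delta^{-1}g_{j_0}^2$. If this fails, the fallback is a separation argument in each truncation $k$. One would then need closedness of the truncated quadratic module together with a diagonal/compactness argument, producing one $L$ on all of $A$ that is a limit of the separating functionals while keeping $L(P)<0$.
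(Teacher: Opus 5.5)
Your reduction is the right one, and it is what the paper builds. A single functional $L$ on $A$ that is nonnegative on every square and negative on some element nonnegative on $\RR$ gives, via $\beta_{i,j}:=L(t^{i\ell_2+j\ell_1})$, one sequence with the $k$-extension property for all $k$ and no $\cZ_p$-representing measure.

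\textbf{First gap: the choice of $P$.} Your construction does not cover all cases. Nonnegativity of $P$ near $t=0$ forces its lowest exponent $j_0$ to be even. Since $S\cap[m,2m)=\{\ell_1,\ell_2\}\cap[m,2m)$, no admissible $j_0<2m$ exists when $\ell_1,\ell_2$ are both odd, e.g.\ $(\ell_1,\ell_2)=(3,5)$. The same happens for $(3,8)$, though there $j_0=8$ would still work. In the both-odd case this is not a technicality: every even nonzero element of $S$ is then a sum of two nonzero elements of $S$. So the identity $(g^2)_{j_0}=2g(0)g_{j_0}$, on which your estimate rests, is never available. For the natural choice $P=xy=t^{\ell_1+\ell_2}$ one has $(g^2)_{\ell_1+\ell_2}=2g(0)g_{\ell_1+\ell_2}+2g_{\ell_1}g_{\ell_2}$. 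The cross term does not involve $g(0)$, so the atom $w\delta_0$ cannot absorb it, while $\varepsilon$ must still exceed $\int t^{\ell_1+\ell_2}\,d\mu>0$. The paper handles this by choosing an obstruction index $d\in S$ with $d/2\notin S$ (here $d=\ell_1+\ell_2$). The negative moment then never sits on a diagonal, so cross terms can be dominated by enlarging diagonal moments such as $\gamma_{2\ell_1}$.

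\textbf{Second gap: the positivity estimate.} The bound $|g_s|\le C_s\|g\|_{L^2(\mu)}$ on $A$ is the whole content of the positivity step, and it is left unproved. Your Gaussian candidate is false. Take $(\ell_1,\ell_2)=(2,3)$, so $A=\RR\oplus t^2\RR[t]$, $j_0=2$, and $(g^2)_2=2ah(0)$ for $g=a+t^2h$. Since $t^4e^{-t^2}\,dt$ is determinate with no atom at $0$, there are $h_k$ with $h_k(0)=1$ and $\int t^4h_k^2\,d\mu\to0$. Hence $L\bigl((a+t^2h_k)^2\bigr)\to(\mu(\RR)+w)a^2-2\varepsilon a<0$ for small $a>0$, whatever $w,\varepsilon>0$ are.

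The bound does hold for an indeterminate $\mu$, e.g.\ density $e^{-|t|^{1/2}}$: point evaluations are then bounded locally uniformly on $\mathbb C$, and Cauchy estimates bound every Taylor coefficient at $0$. With such a $\mu$ your scheme proves the mixed-parity cases: take $P=t^{j_0}$ with $j_0$ the even exponent, $\varepsilon>\int t^{j_0}\,d\mu$, and $w\ge\varepsilon^2C_{j_0}^2$. Adding a term $Wf_{2\ell_1}$ with $W$ large, and enlarging $w$ accordingly, also handles the both-odd case. None of this is in your proposal, however. Your compactness fallback does not yield a single $\beta^{(2n)}$ without further uniform bounds.

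The paper avoids analysis altogether. It fixes $\gamma_d<0$ and defines $\gamma$ on $S=\{s_0<s_1<\cdots\}$ inductively. Since $s_a+s_b<2s_j$ for $a,b<j$ and $2s_j\ne d$, the diagonal entry $\gamma_{2s_j}$ is still free at step $j$. It can be taken larger than $v_j^{\mathsf T}H_{j-1}^{-1}v_j$, so every finite semigroup-Hankel matrix is positive definite by the Schur complement criterion.
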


We first establish an auxiliary semigroup-Hankel construction that will be used in the proof of Theorem~\ref{theo:case-1}. It allows us to prescribe a negative moment while preserving the positive definiteness of every finite Hankel restriction indexed by the relevant additive semigroup.

\begin{lemma}
\label{lem:semigroup-Hankel-construction}
Let \(S\subseteq\mathbb N_0\) be a nonempty additive subsemigroup,
and let \(d\in\mathbb N_0\) satisfy \(d/2\notin S\). Then there exists
a sequence
\[
        \gamma
        =
        (\gamma_s)_{s\in(S+S)\cup\{d\}}
\]
such that \(\gamma_d<0\) and, for every finite subset
\(F\subseteq S\), the matrix
\[
        H_\gamma|_F
        :=
        (\gamma_{r+s})_{r,s\in F}
\]
is positive definite.
\end{lemma}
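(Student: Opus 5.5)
The plan is to reduce the statement to separating the monomial \(t^d\) from a cone of sums of squares by Hahn--Banach, and then to perturb the resulting functional so that it becomes strictly positive definite. Throughout, we identify a sequence \(\gamma\) with the linear functional \(L_\gamma(t^s):=\gamma_s\) on the real vector space
\[
V:=\operatorname{span}\{t^s: s\in (S+S)\cup\{d\}\}.
\]
Let \(A:=\operatorname{span}\{t^s:s\in S\}\). Positive semidefiniteness of \(H_\gamma|_F\) for every finite \(F\subseteq S\) is exactly the condition \(L_\gamma(f^2)\ge0\) for all \(f\in A\).

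\emph{Trivial case.} If \(d\notin S+S\), the value \(\gamma_d\) is not constrained by any Hankel matrix. We take \(\gamma_s:=\int_0^1 t^s\,dt\) for \(s\in S+S\) and set \(\gamma_d:=-1\). For distinct exponents the monomials \(t^r\), \(r\in F\), are linearly independent in \(L^2([0,1])\). Hence each \(H_\gamma|_F\) is a Gram matrix of independent vectors, and so it is positive definite.

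\emph{Main case \(d\in S+S\).} Let \(C:=\{\sum_i f_i^2: f_i\in A\}\subseteq V\). The first step is to show \(t^d\notin C\) by a lowest-degree argument. Suppose \(\sum_i f_i^2\neq0\), and let \(m\) be the minimal order of vanishing at \(0\) among the nonzero \(f_i\). Then the coefficient of \(t^{2m}\) in \(\sum_i f_i^2\) is a sum of squares of leading coefficients, hence is strictly positive, and no lower power of \(t\) occurs. Since \(m\in S\), we get \(2m\neq d\), because \(d/2\notin S\). Therefore \(\sum_i f_i^2\) can never equal \(t^d\).

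The second step, which I expect to be the main obstacle, is to upgrade this to \(t^d\notin\overline{C}\) in the finest locally convex topology on the countable-dimensional space \(V\). In that topology, a convex cone is closed as soon as its intersection with every finite-dimensional subspace \(V_N:=V\cap\mathcal P_N\) is closed. The degree-\(N\) part of \(C\) is contained in \(C_N:=\{\sum f_i^2: f_i\in A\cap\mathcal P_{\lfloor N/2\rfloor}\}\), because top-degree terms of squares cannot cancel. I plan to show that each \(C_N\) is closed by the usual argument: by Carathéodory one may use boundedly many squares, and the map \((f_1,\dots,f_M)\mapsto\sum f_i^2\) is proper, since \(\sum f_i^2=0\) forces all \(f_i=0\).

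With \(t^d\) outside the closed convex cone \(\overline C\), Hahn--Banach yields a linear functional \(L_0\) on \(V\) with \(L_0(t^d)<0\) and \(L_0\ge0\) on \(C\).

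The final step makes the Hankel restrictions strictly positive. For small \(\varepsilon>0\) we set
\[
\gamma_s:=L_0(t^s)+\varepsilon\int_0^1 t^s\,dt,
\]
choosing \(\varepsilon\) so small that \(\gamma_d<0\) still holds. For every finite \(F\subseteq S\) and every \(v\neq0\), the quadratic form is
\[
v^{\mathsf T}H_\gamma|_F v = L_0(f^2)+\varepsilon\|f\|_{L^2[0,1]}^2>0,
\qquad f:=\sum_{r\in F}v_rt^r .
\]
This holds because \(L_0(f^2)\ge0\) and the monomials \(t^r\), \(r\in F\), are linearly independent. Hence every \(H_\gamma|_F\) is positive definite, which proves the lemma.
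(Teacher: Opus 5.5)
Your proof is correct, but it takes a genuinely different route from the paper's.

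\textbf{The paper's route.} The paper stays at the level of matrices. It enumerates \(S\) as \(s_0<s_1<\cdots\) and fixes \(\gamma_d<0\) at the outset. At each stage it fills the new off-diagonal entries \(\gamma_{s_a+s_j}\) (\(a<j\)) arbitrarily, keeping any values already assigned. It then picks the new diagonal moment with \(\gamma_{2s_j}>v_j^{\mathsf T}H_{j-1}^{-1}v_j\). This is possible because \(2s_j\) exceeds every index used so far and \(2s_j\neq d\), and the Schur complement then gives \(H_j\succ0\) at every step.

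\textbf{Your route.} You dualize instead:
\begin{enumerate}
\item Your lowest-term argument, the functional counterpart of the paper's observation \(2s_j\neq d\), shows that \(t^d\) is not a sum of squares in \(\operatorname{span}\{t^s:s\in S\}\).
\item The top-degree argument, together with Carath\'eodory, properness of \((f_i)\mapsto\sum f_i^2\), and the closedness criterion for convex sets in countable-dimensional spaces, makes \(C\) closed in the finest locally convex topology.
\item Hahn--Banach yields \(L_0\), and the perturbation by Lebesgue moments on \([0,1]\) upgrades semidefiniteness to definiteness.
\end{enumerate}

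\textbf{What each buys.} The paper's argument is elementary and explicit, needing only the Schur complement. It also carries over directly to the weighted label semigroups \(\mathcal T\) in the auxiliary lemmas of Theorems~\ref{theo:case-1.2} and~\ref{theo:case-1.3}, where only the freshness of each new diagonal label has to be rechecked. Your argument is nonconstructive and uses heavier machinery. In exchange, it explains the lemma conceptually as the dual of the statement \(t^d\notin C\), and it proves more. For every \(g\in\operatorname{span}\{t^s:s\in S+S\}\) with \(g\notin C\), it produces a sequence with all \(H_\gamma|_F\succ0\) and \(L_\gamma(g)<0\). This gives exactly the kind of separating functionals behind the additional non-sums-of-squares examples in Remark~\ref{rem:additional-non-sos}.
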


\begin{proof}
Since \(S\subseteq\mathbb N_0\), its elements can be enumerated in
increasing order:
\[
        s_0<s_1<s_2<\cdots.
\]
If \(S\) is finite, the same argument applies and terminates after
finitely many steps.

We prescribe an arbitrary negative value for \(\gamma_d\) and
construct the remaining moments inductively so that
\[
        H_j
        :=
        (\gamma_{s_a+s_b})_{a,b=0}^j
        \succ0
\]
at every stage.

Since \(d/2\notin S\), we have \(2s_0\ne d\). We may therefore
choose \(\gamma_{2s_0}>0\), which gives
\(H_0=(\gamma_{2s_0})\succ0\).

Suppose that \(H_{j-1}\succ0\) has already been constructed. After
adjoining the index \(s_j\), the enlarged matrix has the block form
\[
        H_j
        =
        \begin{pmatrix}
        H_{j-1} & v_j\\
        v_j^{\mathsf T} & \gamma_{2s_j}
        \end{pmatrix},
\]
where
\[
        v_j
        :=
        \bigl(
        \gamma_{s_0+s_j},
        \ldots,
        \gamma_{s_{j-1}+s_j}
        \bigr)^{\mathsf T}.
\]
Some entries of \(v_j\) may already have been prescribed at earlier
stages, and one of them may equal the distinguished moment
\(\gamma_d\). We retain all previously prescribed values and assign arbitrary values, say zero, to the remaining semigroup sums, using the same value whenever two sums coincide.

The diagonal moment \(\gamma_{2s_j}\) has not been prescribed at an
earlier stage. Indeed, every moment occurring in \(H_{j-1}\) has an
index of the form \(s_a+s_b\) with \(a,b<j\), and hence
\[
        s_a+s_b<2s_j.
\]
Moreover, \(2s_j\ne d\), because \(d/2\notin S\). We may therefore
choose \(\gamma_{2s_j}\) sufficiently large that
\[
        \gamma_{2s_j}
        >
        v_j^{\mathsf T}H_{j-1}^{-1}v_j.
\]
By the Schur complement criterion, this implies \(H_j\succ0\).

Proceeding inductively, we obtain a consistently defined sequence
\(\gamma\) on \((S+S)\cup\{d\}\) such that \(H_j\succ0\) at every
stage. Now let \(F\subseteq S\) be finite. Then
\(F\subseteq\{s_0,\ldots,s_j\}\) for some \(j\), so
\(H_\gamma|_F\) is a principal submatrix of \(H_j\). Therefore,
\[
        H_\gamma|_F\succ0.
\]
The prescribed inequality \(\gamma_d<0\) is preserved throughout the
construction.
\end{proof}

\subsection*{A concrete obstruction on the cusp \(x^2=y^3\)}

Before proving Theorem~\ref{theo:case-1}, we illustrate the
construction for \(p(x,y)=x^2-y^3\) and \(n=4\). Thus,
\[
        \cZ_p
        =
        \{(x,y)\in\mathbb R^2:x^2=y^3\}.
\]
We shall construct a truncated bivariate sequence
\[
        \beta^{(8)}
        =
        \{\beta_{i,j}:i,j\in\mathbb N_0,\ i+j\le8\}
\]
that has the \(k\)-extension property relative to
\(\{1,p,-p\}\) for every \(k\in\mathbb N\), but admits no
\(\cZ_p\)-representing measure.

The curve \(\cZ_p\) has the parametrization
\(x=t^3\), \(y=t^2\), where \(t\in\mathbb R\). Hence
\(x^iy^j=t^{3i+2j}\). The associated additive submonoid is
\[
        S
        :=
        \{3i+2j:i,j\in\mathbb N_0\}
        =
        \{0,2,3,4,\ldots\}.
\]
Since \(2\in S\), whereas \(2/2=1\notin S\),
Lemma~\ref{lem:semigroup-Hankel-construction}, applied with \(d=2\),
yields a sequence \(\gamma=(\gamma_s)_{s\in S}\) such that
\begin{equation}
\label{negative-moment}
        \gamma_2<0
        \qquad\text{and}\qquad
        H_\gamma|_F
        :=
        (\gamma_{r+s})_{r,s\in F}
        \succ0
\end{equation}
for every finite subset \(F\subseteq S\).

Define an infinite bivariate sequence by
\[
        \beta_{i,j}
        :=
        \gamma_{3i+2j},
        \qquad
        i,j\in\mathbb N_0.
\]
Its truncation to total degree eight is the sequence
\(\beta^{(8)}\) considered above.

For \(r\in\mathbb N_0\), set
\[
        S_r
        :=
        \{3i+2j:i,j\in\mathbb N_0,\ i+j\le r\}.
\]
In particular,
\[
        S_4
        =
        \{0,2,3,4,5,6,7,8,9,10,11,12\}.
\]
The relation \(X^2=Y^3\) generates, among others, the relations
\(X^3=XY^3\) and \(X^2Y=Y^4\). We may therefore choose
\[
        \mathcal B_4
        =
        \{
        1,Y,X,Y^2,XY,Y^3,XY^2,Y^4,
        XY^3,X^2Y^2,X^3Y,X^4
        \}
\]
as a set containing one representative monomial for each exponent
in \(S_4\).

Under the parametrization \(X=t^3\), \(Y=t^2\), the elements of
\(\mathcal B_4\) correspond, in the displayed order, to
\[
        1,t^2,t^3,t^4,t^5,t^6,t^7,t^8,
        t^9,t^{10},t^{11},t^{12}.
\]
Consequently, the restriction of \(M(4;\beta)\) to the rows and
columns indexed by \(\mathcal B_4\) is equal, up to a simultaneous
permutation of rows and columns, to
\[
        H_\gamma|_{S_4}
        =
        (\gamma_{r+s})_{r,s\in S_4}.
\]
Indeed, if \(X^aY^b,X^cY^d\in\mathcal B_4\), then
\[
        \beta_{a+c,b+d}
        =
        \gamma_{3(a+c)+2(b+d)}
        =
        \gamma_{(3a+2b)+(3c+2d)}.
\]
By \eqref{negative-moment}, \(H_\gamma|_{S_4}\succ0\).

More generally, for every \(r\ge4\), choose one monomial
\(X^iY^j\) of degree at most \(r\) for each exponent in \(S_r\).
The corresponding principal restriction of \(M(r;\beta)\) is,
up to a simultaneous permutation of rows and columns, equal to
\(H_\gamma|_{S_r}\), which is positive definite by
\eqref{negative-moment}. The remaining moments are then assigned consistently so that every monomial column not indexed by \(\mathcal B_r\) coincides with the representative column having the same parametrized exponent. It follows that
\[
        M(r;\beta)\succeq0
        \qquad
        \text{for every }r\ge4.
\]

Moreover, for all \(i,j\in\mathbb N_0\),
\[
        \beta_{i+2,j}
        =
        \gamma_{3(i+2)+2j}
        =
        \gamma_{3i+2j+6}
        =
        \gamma_{3i+2(j+3)}
        =
        \beta_{i,j+3}.
\]
Thus \(X^2=Y^3\) is a recursively generated column relation in every
moment matrix \(M(r;\beta)\). Consequently,
\[
        M_p(r)=M_{-p}(r)=0
\]
whenever these localizing matrices are defined.

Therefore, for every \(k\in\mathbb N\), the matrix
\(M(4+k;\beta)\) is a positive extension of \(M(4;\beta)\) and
satisfies
\[
        M_p(4+k)\succeq0,
        \qquad
        M_{-p}(4+k)\succeq0.
\]
Hence \(\beta^{(8)}\) has the \(k\)-extension property relative to
\(\{1,p,-p\}\) for every \(k\in\mathbb N\).

It remains to show that \(\beta^{(8)}\) admits no
\(\cZ_p\)-representing measure. Suppose, to the contrary, that
\(\mu\) is such a measure. Every point of \(\cZ_p\) has the form
\((t^3,t^2)\), and therefore \(y=t^2\ge0\) on \(\cZ_p\). It follows
that
\[
        \beta_{0,1}
        =
        \int_{\cZ_p}y\,d\mu
        \ge0.
\]
On the other hand, \eqref{negative-moment} gives
\[
        \beta_{0,1}
        =
        \gamma_2
        <0,
\]
which is a contradiction. Thus \(\beta^{(8)}\) admits no
\(\cZ_p\)-representing measure.

This example is the even-odd case of
Theorem~\ref{theo:case-1}, with
\(\ell_1=2\), \(\ell_2=3\), and obstruction index
\(d=\ell_1=2\).

\bigskip

We now apply the preceding semigroup-Hankel construction to prove Theorem~\ref{theo:case-1} for arbitrary relatively prime integers \(\ell_1,\ell_2>1\).

\begin{proof}[Proof of Theorem~\ref{theo:case-1}]
Parametrize \(\cZ_p\) by
\(x=t^{\ell_2}\), \(y=t^{\ell_1}\), where \(t\in\mathbb R\), and set
\[
        S
        :=
        \{i\ell_2+j\ell_1:i,j\in\mathbb N_0\}.
\]
We construct an infinite bivariate sequence
\(\beta=(\beta_{i,j})_{i,j\in\mathbb N_0}\) such that every moment
matrix \(M(r;\beta)\) is positive semidefinite and
\(M_p(r)=M_{-p}(r)=0\), but whose truncation \(\beta^{(2n)}\)
admits no \(\cZ_p\)-representing measure.

We distinguish three cases.

\medskip
\noindent\textbf{Case 1.}
Suppose that \(\ell_1\) is odd and \(\ell_2\) is even, and set
\(d:=\ell_2\). Then \(d\in S\), whereas \(d/2\notin S\).
Indeed, if \(\ell_2<\ell_1\), then
\(0<\ell_2/2<\ell_2<\ell_1\), so \(\ell_2/2\) cannot be a
nonzero element of \(S\). If \(\ell_1<\ell_2\) and
\[
        \frac{\ell_2}{2}
        =
        a\ell_2+b\ell_1
        \qquad
        (a,b\in\mathbb N_0),
\]
then \(a=0\), and hence \(\ell_2=2b\ell_1\). This contradicts
\(\gcd(\ell_1,\ell_2)=1\) and \(\ell_1>1\).

\medskip
\noindent\textbf{Case 2.}
Suppose that \(\ell_1\) is even and \(\ell_2\) is odd, and set
\(d:=\ell_1\). By the same argument, with \(\ell_1\) and
\(\ell_2\) interchanged, \(d\in S\) and \(d/2\notin S\).

\medskip
\noindent\textbf{Case 3.}
Suppose that both \(\ell_1\) and \(\ell_2\) are odd, and set
\(d:=\ell_1+\ell_2\). Clearly, \(d\in S\). We claim that
\(d/2\notin S\). Otherwise, there would exist
\(a,b\in\mathbb N_0\) such that
$
        \frac{\ell_1+\ell_2}{2}
        =
        a\ell_2+b\ell_1,
$
or equivalently,
$
        (2a-1)\ell_2
        =
        (1-2b)\ell_1.
$
If \(a,b\ge1\), the two sides have opposite signs. If \(a=0\), then
\(\ell_2=(2b-1)\ell_1\), contradicting coprimality and
\(\ell_1>1\). Similarly, if \(b=0\), then
\(\ell_1=(2a-1)\ell_2\), which yields the same contradiction.
Therefore, \(d/2\notin S\).

In all three cases, \(d\in S\) and \(d/2\notin S\). Hence Lemma~\ref{lem:semigroup-Hankel-construction} provides a sequence \[ \gamma = (\gamma_s)_{s\in(S+S)\cup\{d\}} \] such that \begin{equation} \label{negative-moment-general} \gamma_d<0 \qquad\text{and}\qquad (\gamma_{u+v})_{u,v\in F}\succ0 \quad \text{for every finite subset }F\subseteq S. \end{equation} Since \(0\in S\), we have \(S+S=S\). Moreover, \(d\in S\), and therefore \[ (S+S)\cup\{d\}=S. \] Thus we may regard \(\gamma\) simply as a sequence \(\gamma=(\gamma_s)_{s\in S}\).

Define
\[
        \beta_{i,j}
        :=
        \gamma_{i\ell_2+j\ell_1},
        \qquad
        i,j\in\mathbb N_0.
\]
For \(r\in\mathbb N_0\), let
\[
        S_r
        :=
        \{i\ell_2+j\ell_1:
        i,j\in\mathbb N_0,\ i+j\le r\}.
\]
Choose one monomial \(X^iY^j\) of degree at most \(r\) for each
element of \(S_r\), and denote the resulting set by
\(\mathcal B_r\). Up to a simultaneous permutation of rows and
columns, the restriction of \(M(r;\beta)\) to
\(\mathcal B_r\) is
\[
        (\gamma_{u+v})_{u,v\in S_r},
\]
which is positive definite by \eqref{negative-moment-general}.

Every monomial of degree at most \(r\) has the same image under the
parametrization as exactly one representative in \(\mathcal B_r\).
Consequently, every column of \(M(r;\beta)\) coincides with one of
the columns indexed by \(\mathcal B_r\). It follows that
\[
        M(r;\beta)\succeq0
        \qquad
        \text{for every }r\in\mathbb N_0.
\]

For all \(i,j\in\mathbb N_0\), we have
\[
        \beta_{i+\ell_1,j}
        =
        \gamma_{i\ell_2+j\ell_1+\ell_1\ell_2}
        =
        \beta_{i,j+\ell_2}.
\]
Thus \(X^{\ell_1}=Y^{\ell_2}\) is a recursively generated column
relation in every moment matrix \(M(r;\beta)\). Consequently,
$
        M_p(r)=M_{-p}(r)=0
$
whenever these localizing matrices are defined.

For every \(k\in\mathbb N\), the matrix \(M(n+k;\beta)\) is
therefore a positive extension of \(M(n;\beta)\) satisfying
\[
        M_p(n+k)\succeq0,
        \qquad
        M_{-p}(n+k)\succeq0.
\]
Hence \(\beta^{(2n)}\) has the \(k\)-extension property relative to
\(\{1,p,-p\}\) for every \(k\in\mathbb N\).

Define \[ f := \begin{cases} x, & \text{in Case~1},\\ y, & \text{in Case~2},\\ xy, & \text{in Case~3}. \end{cases} \] By construction, \(L_\beta(f)=\gamma_d<0\).
Under the parametrization \(x=t^{\ell_2}\), \(y=t^{\ell_1}\), we have \[ f(t^{\ell_2},t^{\ell_1}) = \begin{cases} \bigl(t^{\ell_2/2}\bigr)^2, & \text{if \(\ell_1\) is odd and \(\ell_2\) is even},\\ \bigl(t^{\ell_1/2}\bigr)^2, & \text{if \(\ell_1\) is even and \(\ell_2\) is odd},\\ \bigl(t^{(\ell_1+\ell_2)/2}\bigr)^2, & \text{if \(\ell_1\) and \(\ell_2\) are both odd}. \end{cases} \] Hence \(f\ge0\) on \(\cZ_p\). If \(\beta^{(2n)}\) admitted a \(\cZ_p\)-representing measure \(\mu\), then \[ 0 > L_\beta(f) = \int_{\cZ_p}f\,d\mu \ge0, \] a contradiction. Therefore, \(\beta^{(2n)}\) admits no \(\cZ_p\)-representing measure, although it has the \(k\)-extension property relative to \(\{1,p,-p\}\) for every \(k\in\NN_0\).
\end{proof}

\begin{corollary}
\label{pos-not-sos-v2}
Let \(\ell_1,\ell_2\in\NN\setminus\{1\}\) satisfy
\(\gcd(\ell_1,\ell_2)=1\), and let
\(p(x,y):=x^{\ell_1}-y^{\ell_2}\). Define
\[
f(x,y)
:=
\begin{cases}
x,
& \text{if \(\ell_1\) is odd and \(\ell_2\) is even},\\[0.2em]
y,
& \text{if \(\ell_1\) is even and \(\ell_2\) is odd},\\[0.2em]
xy,
& \text{if \(\ell_1\) and \(\ell_2\) are both odd}.
\end{cases}
\]
Then \(f\) is nonnegative on \(\cZ_p\), but
\[
        f\notin\Sigma_{\{1,p,-p\},n}
        \qquad
        \text{for every }n\in\NN.
\]
Equivalently, the class of \(f\) is not a sum of squares in the
coordinate ring \(\mathbb R[x,y]/(p)\).
\end{corollary}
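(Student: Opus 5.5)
Nonnegativity of \(f\) on \(\cZ_p\) comes directly from the parametrization used in the proof of Theorem~\ref{theo:case-1}. Every point of \(\cZ_p\) has the form \((t^{\ell_2},t^{\ell_1})\) with \(t\in\mathbb R\): given \((x,y)\in\cZ_p\), coprimality of \(\ell_1,\ell_2\) lets us choose a real \(t\) with \(x=t^{\ell_2}\) and \(y=t^{\ell_1}\). In each of the three parity cases, \(f(t^{\ell_2},t^{\ell_1})\) is an even power of \(t\). Hence \(f\ge0\) on \(\cZ_p\).

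For the non-membership I will argue by duality, using the sequence \(\beta\) built in the proof of Theorem~\ref{theo:case-1}. Fix \(n\in\NN\) and suppose, for contradiction, that
\[
f=\sum_j g_j^2+p\,h_1-p\,h_2
\]
with \(\deg g_j^2\le 2n\) and \(\deg(p h_i)\le 2n\), where \(h_1,h_2\) are sums of squares. That construction gives an infinite sequence \(\beta=(\beta_{i,j})\) such that every \(M(r;\beta)\) is positive semidefinite, \(M_p(r)=0\) whenever defined, and \(L_\beta(f)=\gamma_d<0\). The construction of \(\beta\) did not depend on \(n\); only the conclusion used \(n\ge\max\{\ell_1,\ell_2\}\). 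So I take \(r\) large, for instance \(r\ge\max\{n,\deg p\}+\deg p\), so that the full functional \(L_\beta\) is defined on all terms involved. Then \(L_\beta(g_j^2)\ge0\) because \(M(r;\beta)\succeq0\). Also \(L_\beta(p\,u^2)=0\) for every \(u\) of degree at most \(r-\lceil\deg p/2\rceil\), because \(M_p(r)=0\).

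The one step needing care is checking that \(L_\beta(p\,q)=0\) for arbitrary \(q\), and not only for squares of low-degree polynomials. I will verify this directly from the defining identity \(\beta_{i+\ell_1,j}=\beta_{i,j+\ell_2}\). This identity gives \(L_\beta(x^{a}y^{b}p)=0\) for every monomial \(x^ay^b\), and by linearity \(L_\beta(pq)=0\) for all \(q\in\mathbb R[x,y]\). Applying \(L_\beta\) to the representation then yields \(0>L_\beta(f)=\sum_j L_\beta(g_j^2)\ge0\), which is a contradiction. Hence \(f\notin\Sigma_{\{1,p,-p\},n}\) for every \(n\), including \(n<\max\{\ell_1,\ell_2\}\), since the argument never used a lower bound on \(n\).

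For the equivalence with the coordinate-ring statement, I will show that \(f\) is a sum of squares in \(\mathbb R[x,y]/(p)\) if and only if \(f\in\Sigma_{\{1,p,-p\},n}\) for some \(n\). If \(f=\sum g_j^2+p\,h\) with \(h\) arbitrary, I will write \(h=\bigl(\tfrac{h+1}{2}\bigr)^2-\bigl(\tfrac{h-1}{2}\bigr)^2\). This gives \(ph=p\,s_1-p\,s_2\) with \(s_1,s_2\) squares, so \(f\) lies in \(\Sigma_{\{1,p,-p\},n}\) for \(n\) large. The converse is immediate. I expect no real obstacle in this proof; the substantive work was already done in Theorem~\ref{theo:case-1}.
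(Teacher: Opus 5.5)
Your proposal is correct and follows essentially the paper's argument: you use the same separating Riesz functional $L_\beta$ from the proof of Theorem~\ref{theo:case-1}, which is nonnegative on squares, annihilates $(p)$ via $\beta_{i+\ell_1,j}=\beta_{i,j+\ell_2}$, and is negative on $f$. The only difference is the order of deduction. The paper refutes a representation $f=\sum g_i^2+ph$ modulo $(p)$ directly and then deduces $f\notin\Sigma_{\{1,p,-p\},n}$. You instead refute membership in $\Sigma_{\{1,p,-p\},n}$ first and then make the ``equivalently'' explicit via $h=\bigl(\tfrac{h+1}{2}\bigr)^2-\bigl(\tfrac{h-1}{2}\bigr)^2$.
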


\begin{proof}
The computation at the end of the proof of Theorem~\ref{theo:case-1}
shows that, under the parametrization \(x=t^{\ell_2}\), \(y=t^{\ell_1}\),
the polynomial \(f(t^{\ell_2},t^{\ell_1})\) is a square in each of the
three parity cases. Hence \(f\ge 0\) on \(\cZ_p\).

Let \(L_\beta\) be the Riesz functional constructed in that proof.
By construction, \(L_\beta(g^2)\ge 0\) and \(L_\beta(ph)=0\) for all
\(g,h\in\mathbb R[x,y]\), while \(L_\beta(f)<0\).

Suppose that the class of \(f\) were a sum of squares in
\(\mathbb R[x,y]/(p)\). Then
\(f=\sum_{i=1}^r g_i^2+ph\) for some
\(g_1,\ldots,g_r,h\in\mathbb R[x,y]\). Applying \(L_\beta\) gives
\[
0>L_\beta(f)=\sum_{i=1}^r L_\beta(g_i^2)\ge 0,
\]
a contradiction. Thus the class of \(f\) is not a sum of squares in
\(\mathbb R[x,y]/(p)\). In particular,
\(f\notin\Sigma_{\{1,p,-p\},n}\) for every \(n\in\mathbb N\), since
any such representation would give a sum-of-squares representation
of the class of \(f\) modulo \((p)\).
\end{proof}

\begin{remark}
\label{rem:additional-non-sos}
The polynomials in Corollary~\ref{pos-not-sos-v2} arise from the
simplest obstruction used in the proof of
Theorem~\ref{theo:case-1}: a moment \(\gamma_d<0\) whose half-index
\(d/2\) does not belong to the additive semigroup
\[
        S
        =
        \{i\ell_2+j\ell_1:i,j\in\mathbb N_0\}.
\]
Although the semigroup-Hankel matrices indexed by finite subsets of
\(S\) are positive definite, \(\gamma_d\) would be a negative
diagonal entry in the full Hankel matrix obtained by adjoining the
monomial \(T^{d/2}\). The corresponding polynomial \(x\), \(y\), or
\(xy\) is nonnegative on \(\cZ_p\), but is evaluated negatively by
\(L_\beta\).

The same separating-functional argument produces further
nonnegative polynomials that are not sums of squares in the
coordinate ring. Let \(a_1,\ldots,a_r\in\mathbb N_0\) be such that
\(a_i+a_j\in S\) for all \(i,j\), and consider
\[
        G
        :=
        (\gamma_{a_i+a_j})_{i,j=1}^r.
\]
Suppose that \(G\) has a negative eigenvalue, and let
\(v=(v_1,\ldots,v_r)^{\mathsf T}\) be a corresponding eigenvector.
If, under the parametrization
\(x=t^{\ell_2}\), \(y=t^{\ell_1}\), the square
\[
        \left(
        v_1t^{a_1}+\cdots+v_rt^{a_r}
        \right)^2
\]
is represented by a polynomial \(f(x,y)\), then \(f\) is
nonnegative on \(\cZ_p\). On the other hand,
\[
        L_\beta(f)
        =
        v^{\mathsf T}Gv
        <0.
\]
The argument from Corollary~\ref{pos-not-sos-v2} therefore shows that
the class of \(f\) is not a sum of squares in
\(\mathbb R[x,y]/(p)\).

For a concrete example, suppose that \(\ell_1\) is odd and
\(\ell_2\) is even, and consider
\[
        G
        :=
        \begin{pmatrix}
        \gamma_{\ell_2}
        &
        \gamma_{\ell_1\ell_2/2}\\
        \gamma_{\ell_1\ell_2/2}
        &
        \gamma_{(\ell_1-1)\ell_2}
        \end{pmatrix}.
\]
Since its first diagonal entry satisfies \(\gamma_{\ell_2}<0\), the
matrix \(G\) is not positive semidefinite. Let
\(v=(v_1,v_2)^{\mathsf T}\) be an eigenvector corresponding to a
negative eigenvalue of \(G\), and define
\[
        f_v(x,y)
        :=
        v_1^2x
        +
        2v_1v_2y^{\ell_2/2}
        +
        v_2^2x^{\ell_1-1}.
\]
Under the parametrization of \(\cZ_p\), this polynomial satisfies
\[
\begin{aligned}
        f_v(t^{\ell_2},t^{\ell_1})
        &=
        v_1^2t^{\ell_2}
        +
        2v_1v_2t^{\ell_1\ell_2/2}
        +
        v_2^2t^{(\ell_1-1)\ell_2}\\
        &=
        \left(
        v_1t^{\ell_2/2}
        +
        v_2t^{(\ell_1-1)\ell_2/2}
        \right)^2.
\end{aligned}
\]
Hence \(f_v\ge0\) on \(\cZ_p\). Nevertheless,
\(L_\beta(f_v)=v^{\mathsf T}Gv<0\). Therefore, the class of \(f_v\)
is not a sum of squares in \(\mathbb R[x,y]/(p)\).
\end{remark}

\medskip

We next show that the obstruction established in Theorem~\ref{theo:case-1} persists after multiplying the defining polynomial by either coordinate variable.

\begin{theorem}\label{theo:case-1.2}
Let $\ell_1,\ell_2\in\NN\setminus\{1\}$ satisfy
$\gcd(\ell_1,\ell_2)=1$, and let
\[
p(x,y)=q(x,y)\bigl(x^{\ell_1}-y^{\ell_2}\bigr),
\qquad
q(x,y)\in\{x,y\}.
\]
Then, for every 
    $n\geq\max\{\ell_1+1,\ell_2+1\},$
there exists a
truncated sequence $\beta^{(2n)}$ having the $k$-extension property
relative to $\{1,p,-p\}$ for every $k\in\NN_0$, but admitting no
$\cZ_p$-representing measure.
\end{theorem}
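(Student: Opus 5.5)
The plan is to reuse the sequence built in the proof of Theorem~\ref{theo:case-1}, changing only the index at which the negative moment is placed. The new obstruction polynomial is the old one multiplied by the square of the line variable.

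\textbf{Geometry and extensions.} Write \(p_0:=x^{\ell_1}-y^{\ell_2}\) and \(S=\{i\ell_2+j\ell_1\}\) as before. Then \(\cZ_p=\cZ_{p_0}\cup\cZ_q\), where \(\cZ_q\) is the line \(\{x=0\}\) or \(\{y=0\}\). Let \(e:=\ell_2\) if \(q=x\) and \(e:=\ell_1\) if \(q=y\), so that \(q(t^{\ell_2},t^{\ell_1})=t^e\).

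Take a \(d\in S\) with \(d/2\notin S\), to be fixed below. Apply Lemma~\ref{lem:semigroup-Hankel-construction} to obtain \(\gamma\) with \(\gamma_d<0\), and set \(\beta_{i,j}:=\gamma_{i\ell_2+j\ell_1}\). Exactly as in the proof of Theorem~\ref{theo:case-1}, every \(M(r;\beta)\) is positive semidefinite. Moreover \(L_\beta\) vanishes on the ideal \((p_0)\), because \(X^{\ell_1}=Y^{\ell_2}\) is recursively generated. Since \(p=qp_0\in(p_0)\), we get \(L_\beta(pfg)=0\) for all \(f,g\). Hence \(M_p(n+k)=M_{-p}(n+k)=0\), and the \(k\)-extension property holds for every \(k\in\NN_0\). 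This part requires no new ideas.

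\textbf{The obstruction.} The polynomial \(f_0\in\{x,y,xy\}\) from Corollary~\ref{pos-not-sos-v2} satisfies \(f_0(t^{\ell_2},t^{\ell_1})=(t^{d_0/2})^2\), where \(d_0\in\{\ell_2,\ell_1,\ell_1+\ell_2\}\). However, \(f_0\) need not be nonnegative on the added line; for example, \(y\) is not nonnegative on \(\{x=0\}\). I would therefore use
\[
f:=q^2f_0 .
\]
This \(f\) vanishes on \(\cZ_q\). On the cusp it equals \(\bigl(t^{e+d_0/2}\bigr)^2\ge0\), so \(f\ge0\) on \(\cZ_p\). Its degree is at most \(4\le 2n\), so \(L_\beta(f)\) is defined on \(\beta^{(2n)}\).

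I then take \(d:=d_0+2e\), so that \(L_\beta(f)=\gamma_d\). If \(\mu\) were a \(\cZ_p\)-representing measure, we would get \(0>\gamma_d=\int f\,d\mu\ge0\), a contradiction. Clearly \(d\in S\), since \(d_0\in S\) and \(e\in\{\ell_1,\ell_2\}\subseteq S\).

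\textbf{Main obstacle.} It remains to verify that \(d/2=e+d_0/2\notin S\). This is a case analysis: three parity cases times two choices of \(q\), each solved as \(a\ell_2+b\ell_1=e+d_0/2\) and checked against \(\gcd(\ell_1,\ell_2)=1\) and \(\ell_1,\ell_2>1\).

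I expect this check to fail in some small cases. For example, with \(q=x\), \(\ell_1=2\) and \(\ell_2\) odd, we have \(\ell_2+1\in S\). When it fails, I would fall back on another choice of \(f\) of the form \(g^2f_0'\). Here \(g\) vanishes on the line, and \(f_0'\) is some other monomial that is a square on the cusp, such as \(x^{\ell_1-1}\) or \(y^{\ell_2-1}\) in the appropriate parity case. Each such choice yields an index \(d'\in S\), and I would search among these monomials for one with \(d'/2\notin S\).

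Alternatively, since \(S\) has finite complement and gaps exist below the Frobenius number, I would try an exponent \(m\notin S\) with \(m>e\) and \(2m\in S\). I would then write \(t^{2m}\) as \(q^2\) times a monomial that is a square on the cusp, which is possible when \(m-e\) is a suitable half-index. Finding such an \(m\) uniformly in \(\ell_1,\ell_2\) is the step I expect to require the most care.
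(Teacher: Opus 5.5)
\textbf{There is a genuine gap: the obstruction step is left open, and in the case you flagged yourself none of your routes can close it.} The extension half of your argument is fine.

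Take $q=x$, $\ell_1=2$, $\ell_2$ odd, e.g.\ $p=x(x^2-y^3)$, where the added line is the tangent at the cusp. Then $S=\langle 2,\ell_2\rangle$ has Frobenius number $\ell_2-2$, so its gaps are exactly the odd integers below $\ell_2$. Hence $d\in S$, $d/2\notin S$ forces $d=2m$ with $m$ odd and $m<\ell_2$.

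Under $x=t^{\ell_2}$, $y=t^2$, the only monomial sent to $t^{2m}$ is $y^m$. So any $f$ with $f(t^{\ell_2},t^2)=t^{2m}$ has the form $y^m+p_0h$. Its restriction to $x=0$ is $y^m-y^{\ell_2}h(0,y)$, which is negative for small $y<0$. Thus no admissible $d$ has a certificate of your type that is nonnegative on $\cZ_p$. Each of your candidates fails:
\begin{itemize}
\item $x^2y$ gives $d/2=\ell_2+1\in S$.
\item Any $g^2f_0'$ with $x\mid g$ restricts on the cusp to the square of a polynomial in $t$ whose exponents are all $\ge\ell_2$. These exponents lie in $S$, so $L_\beta(g^2f_0')\ge0$ by positivity of the $S$-Hankel matrices.
\item $S$ has no gap above $e=\ell_2$, so your second fallback has nothing to choose.
\end{itemize}
Separately, when $q\mid f_0$ you should just take $f=f_0$, which already vanishes on $\cZ_q$. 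Your recipe $q^2f_0$ fails in those cases too; for example, $q=x$, $\ell_1=3$, $\ell_2$ even gives $d/2=3\ell_2/2\in S$.

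\textbf{The missing idea is that you do not need a certificate that is nonnegative on the added line.} Your sequence satisfies $L_\beta(p_0^2)=0$, and $\deg p_0^2=2\max\{\ell_1,\ell_2\}\le 2n$. So a $\cZ_p$-representing measure $\mu$ of $\beta^{(2n)}$ would satisfy $\int p_0^2\,d\mu=0$ and hence be supported on $\cZ_{p_0}$. Then $0\le\int f_0\,d\mu=L_\beta(f_0)=\gamma_{d_0}<0$, a contradiction. With this observation, the sequence of Theorem~\ref{theo:case-1}, with its original index $d_0$, proves the statement as it stands. That is shorter than the paper's proof.

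The paper treats $q=y$ (and $q=x$ by symmetry) by building a new functional on a semigroup that keeps the pure monomials $X^a$ apart from the mixed ones. Then only $Y(X^{\ell_1}-Y^{\ell_2})=0$ holds, and the relevant exponent set becomes $S_+=\{a\ell_2+b\ell_1:a\in\NN_0,\ b\in\NN\}$. For instance, when $\ell_1$ is odd and $\ell_2$ is even it takes $d=\ell_2+2\ell_1$ and $f=xy^2$, with $d/2\notin S_+$.

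That extra work buys two things. First, it makes the explicit monomial certificates of Corollary~\ref{cor-after-3-3-v2} separating; on your sequence, in the case above, $L_\beta(x^2y)=\gamma_{2(\ell_2+1)}>0$. Second, it carries over to Theorems~\ref{theo:case-6} and~\ref{theo:case-7}. There $\cZ_{y-x^\ell}$ alone carries no obstruction, so the support argument has nothing to reduce to.
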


\begin{proof}
By interchanging $x$ and $y$, it suffices to consider $q(x,y)=y$, so
that
\[
p(x,y)=y\bigl(x^{\ell_1}-y^{\ell_2}\bigr).
\]
Its zero set is
\[
\cZ_p=\cZ_y\cup \cZ_{x^{\ell_1}-y^{\ell_2}}.
\]
The second component is parametrized by $x=t^{\ell_2}$,
$y=t^{\ell_1}$, $t\in\RR$. Consider the additive subsemigroup
\[
S_+:=\{a\ell_2+b\ell_1:a\in\NN_0,\ b\in\NN\}.
\]

We first record an auxiliary construction.

\medskip
\noindent\textbf{Auxiliary lemma.}
Suppose that $d\in S_+$ and $d/2\notin S_+$. Then there exists an
infinite bivariate sequence
$\beta=(\beta_{i,j})_{i,j\in\NN_0}$ such that
\[
M(r;\beta)\succeq0\quad(r\in\NN_0)\qquad\text{and}\qquad
\beta_{i+\ell_1,j+1}=\beta_{i,j+\ell_2+1}
\quad(i,j\in\NN_0),
\]
and $\beta_{a,b}<0$ whenever $a\in\NN_0$, $b\in\NN$, and
$a\ell_2+b\ell_1=d$.

\smallskip
\noindent\textit{Proof of the auxiliary lemma.}
Introduce
\[
\mathcal T
:=
\{\mathsf P_a:a\in\NN_0\}
\,\dot\cup\,
\{\mathsf M_s:s\in S_+\},
\]
with the commutative semigroup operation defined by
$\mathsf P_a+\mathsf P_b:=\mathsf P_{a+b}$,
$\mathsf P_a+\mathsf M_s:=\mathsf M_{a\ell_2+s}$, and
$\mathsf M_s+\mathsf M_t:=\mathsf M_{s+t}$.
Thus $\mathsf P_a$ represents the pure monomial $X^a$, whereas
$\mathsf M_s$ represents the class of monomials $X^aY^b$ with
$b\geq1$ and $a\ell_2+b\ell_1=s$. In particular, pure and mixed
classes are kept distinct.

Define
$w(\mathsf P_a):=a\ell_2$ and $w(\mathsf M_s):=s$.
Enumerate the elements of $\mathcal T$ as
$\tau_0,\tau_1,\tau_2,\ldots$ in nondecreasing order of weight, with
the mixed element placed before the pure element whenever the two
have the same weight.

We construct $\varphi:\mathcal T\to\RR$ such that
$\varphi(\mathsf M_d)<0$ and
\[
\bigl(\varphi(\sigma+\tau)\bigr)_{\sigma,\tau\in F}\succ0
\]
for every finite $F\subseteq\mathcal T$.

Prescribe an arbitrary negative value for $\varphi(\mathsf M_d)$.
Suppose inductively that
\[
H_{j-1}
:=
\bigl(\varphi(\tau_a+\tau_b)\bigr)_{a,b=0}^{j-1}
\succ0
\]
has already been constructed. After adjoining $\tau_j$, write
\[
H_j=
\begin{pmatrix}
H_{j-1} & v_j\\
v_j^T & \varphi(2\tau_j)
\end{pmatrix}.
\]
Retain the values in $v_j$ that have already been prescribed, and
assign arbitrary values, say zero, to the remaining entries.

The diagonal value $\varphi(2\tau_j)$ has not been prescribed at an
earlier stage. Indeed, if
$2\tau_j=\tau_a+\tau_b$ for some $a,b<j$, then equality of weights
forces $w(\tau_a)=w(\tau_b)=w(\tau_j)$. If $\tau_j$ is mixed, this is
impossible because the mixed element is placed first at each weight.
If $\tau_j$ is pure, the only possible earlier element of the same
weight is mixed, but the sum of two mixed elements is mixed whereas
$2\tau_j$ is pure. Moreover, $2\tau_j\neq\mathsf M_d$: this is clear
when $\tau_j$ is pure, while in the mixed case equality would imply
$d/2\in S_+$.

Hence $\varphi(2\tau_j)$ is free, and we choose it so that
\[
\varphi(2\tau_j)>v_j^TH_{j-1}^{-1}v_j.
\]
The Schur complement criterion gives $H_j\succ0$. Proceeding
inductively yields the desired function $\varphi$.

Now define
\[
\beta_{i,0}:=\varphi(\mathsf P_i),\qquad
\beta_{i,j}:=\varphi\bigl(\mathsf M_{i\ell_2+j\ell_1}\bigr)
\quad(j\geq1).
\]
Associate to each monomial $X^iY^j$ the label
\[
\lambda(i,j):=
\begin{cases}
\mathsf P_i, & j=0,\\
\mathsf M_{i\ell_2+j\ell_1}, & j\geq1.
\end{cases}
\]
Then $\lambda(i,j)+\lambda(a,b)=\lambda(i+a,j+b)$, and therefore
\[
\beta_{i+a,j+b}
=
\varphi\bigl(\lambda(i,j)+\lambda(a,b)\bigr).
\]
Thus every moment matrix $M(r;\beta)$ is the pullback of a finite
positive definite matrix
$$\bigl(\varphi(\sigma+\tau)\bigr)_{\sigma,\tau\in F},$$ and hence
\[
M(r;\beta)\succeq0\qquad(r\in\NN_0).
\]

Furthermore, for all $i,j\in\NN_0$,
\[
\beta_{i+\ell_1,j+1}
=
\beta_{i,j+\ell_2+1},
\]
because the two moments correspond to the same mixed label. Finally,
if $d=a\ell_2+b\ell_1$ with $b\geq1$, then
$\beta_{a,b}=\varphi(\mathsf M_d)<0$. This proves the auxiliary
lemma.\hfill $\blacksquare$

\medskip
We now choose the obstruction index $d$.
Suppose first that $\ell_1$ is odd and $\ell_2$ is even, and set
\[
d:=\ell_2+2\ell_1.
\]
Then $d\in S_+$. If $d/2\in S_+$, then
$\ell_2/2+\ell_1=a\ell_2+b\ell_1$ for some
$a\in\NN_0$, $b\in\NN$, and hence
$(2a-1)\ell_2=2(1-b)\ell_1$. If $a\geq1$, the two sides have
opposite signs; hence $a=0$, which gives
$\ell_2=2(b-1)\ell_1$, contradicting coprimality and $\ell_1>1$.
Thus $d/2\notin S_+$.

Suppose next that $\ell_1$ is even and $\ell_2$ is odd, and set
$d:=\ell_1$. Then $d\in S_+$ and $d/2\notin S_+$, since every
element of $S_+$ is at least $\ell_1$.

Finally, suppose that both $\ell_1$ and $\ell_2$ are odd, and set
$d:=\ell_1+\ell_2$. If $d/2\in S_+$, then
$(\ell_1+\ell_2)/2=a\ell_2+b\ell_1$ for some
$a\in\NN_0$, $b\in\NN$, and hence
$(2a-1)\ell_2=(1-2b)\ell_1$. Since $b\geq1$, the right-hand side is
negative, so $a=0$, which gives
$\ell_2=(2b-1)\ell_1$, again contradicting coprimality and
$\ell_1>1$. Thus $d/2\notin S_+$.

In each case, the auxiliary lemma gives an infinite sequence
$\beta=(\beta_{i,j})_{i,j\in\NN_0}$ satisfying
\[
M(r;\beta)\succeq0\qquad(r\in\NN_0)
\]
and
\[
\beta_{i+\ell_1,j+1}
=
\beta_{i,j+\ell_2+1}
\qquad(i,j\in\NN_0).
\]
Hence
\[
Y(X^{\ell_1}-Y^{\ell_2})=0
\]
is a recursively generated column relation at every order, and
therefore
\[
M_p(r)=M_{-p}(r)=0
\]
whenever the localizing matrices are defined.

Define
\[
f:=
\begin{cases}
xy^2,
& \ell_1\ \text{odd},\ \ell_2\ \text{even},\\
y,
& \ell_1\ \text{even},\ \ell_2\ \text{odd},\\
xy,
& \ell_1,\ell_2\ \text{odd}.
\end{cases}
\]
By construction, $L_\beta(f)<0$. Under the parametrization
$x=t^{\ell_2}$, $y=t^{\ell_1}$,
\[
f(t^{\ell_2},t^{\ell_1})
=
\begin{cases}
\bigl(t^{\ell_2/2+\ell_1}\bigr)^2,
& \ell_1\ \text{odd},\ \ell_2\ \text{even},\\
\bigl(t^{\ell_1/2}\bigr)^2,
& \ell_1\ \text{even},\ \ell_2\ \text{odd},\\
\bigl(t^{(\ell_1+\ell_2)/2}\bigr)^2,
& \ell_1,\ell_2\ \text{odd}.
\end{cases}
\]
Moreover, $f$ vanishes on $\cZ_y$. Hence $f\geq0$ on $\cZ_p$, whereas
$L_\beta(f)<0$.

Let $\beta^{(2n)}$ be the degree-$2n$ truncation of $\beta$. Since
all moment matrices are positive semidefinite and
$M_p(r)=M_{-p}(r)=0$ at every order, $M(n+k;\beta)$ is a positive
extension of $M(n;\beta)$ satisfying the required localizing
conditions for every $k\in\NN_0$. Thus $\beta^{(2n)}$ has the
$k$-extension property relative to $\{1,p,-p\}$ for every
$k\in\NN_0$.

If $\beta^{(2n)}$ admitted a $\cZ_p$-representing measure $\mu$, then
\[
0>L_\beta(f)=\int_{\cZ_p}f\,d\mu\geq0,
\]
a contradiction. Therefore, $\beta^{(2n)}$ admits no
$\cZ_p$-representing measure.
\end{proof}

We next show that the same obstruction persists when both coordinate
axes are adjoined to the monomial curve.

\begin{theorem}\label{theo:case-1.3}
Let $\ell_1,\ell_2\in\NN\setminus\{1\}$ satisfy
$\gcd(\ell_1,\ell_2)=1$, and let
\[
p(x,y):=xy\bigl(x^{\ell_1}-y^{\ell_2}\bigr).
\]
Then, for every
$
n\geq\max\{\ell_1+2,\ell_2+2\},
$
there exists a truncated sequence $\beta^{(2n)}$ having the
$k$-extension property relative to $\{1,p,-p\}$ for every
$k\in\NN_0$, but admitting no $\cZ_p$-representing measure.
\end{theorem}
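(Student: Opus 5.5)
Following the proof of Theorem~\ref{theo:case-1.2}, I will build an infinite sequence from a commutative labelled semigroup that keeps pure monomials apart from mixed ones. Here the zero set is
\[
\cZ_p=\cZ_x\cup\cZ_y\cup\cZ_{x^{\ell_1}-y^{\ell_2}},
\]
and the relation \(XY(X^{\ell_1}-Y^{\ell_2})=0\) should identify only monomials \(X^aY^b\) with \(a,b\ge1\). So I use three disjoint families of labels:
\begin{itemize}
\item pure-\(x\) labels \(\mathsf P_a\) for \(X^a\), with \(a\in\NN_0\) and \(\mathsf P_0\) the unit;
\item pure-\(y\) labels \(\mathsf Q_b\) for \(Y^b\), with \(b\in\NN\);
\item mixed labels \(\mathsf M_s\) for \(s\in S_{++}:=\{a\ell_2+b\ell_1:a,b\in\NN\}\).
\end{itemize}
The operation is \(\mathsf P_a+\mathsf Q_b:=\mathsf M_{a\ell_2+b\ell_1}\) when \(a\ge1\), \(\mathsf P_a+\mathsf M_s:=\mathsf M_{a\ell_2+s}\), \(\mathsf Q_b+\mathsf M_s:=\mathsf M_{b\ell_1+s}\), and the obvious additive rules within each family. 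The labelling is \(\lambda(i,j)=\mathsf P_i\) if \(j=0\), \(\mathsf Q_j\) if \(i=0<j\), and \(\mathsf M_{i\ell_2+j\ell_1}\) if \(i,j\ge1\). It satisfies \(\lambda(i,j)+\lambda(a,b)=\lambda(i+a,j+b)\), so \(\beta_{i,j}:=\varphi(\lambda(i,j))\) is well defined. Two monomials \(X^{i+\ell_1+1}Y^{j+1}\) and \(X^{i+1}Y^{j+\ell_2+1}\) get the same mixed label, so \(\beta_{i+\ell_1+1,j+1}=\beta_{i+1,j+\ell_2+1}\). This makes the relation recursively generated at every order, and hence \(M_p(r)=M_{-p}(r)=0\).

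The auxiliary step is to construct \(\varphi\) with \(\varphi(\mathsf M_d)<0\) and every finite Hankel restriction \((\varphi(\sigma+\tau))_{\sigma,\tau\in F}\) positive definite. I will use the inductive Schur-complement argument from Lemma~\ref{lem:semigroup-Hankel-construction} and Theorem~\ref{theo:case-1.2}, with weights \(w(\mathsf P_a)=a\ell_2\), \(w(\mathsf Q_b)=b\ell_1\), \(w(\mathsf M_s)=s\). Labels are enumerated by nondecreasing weight, and at equal weight the order is mixed, then \(\mathsf P\), then \(\mathsf Q\).

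The key check is that \(2\tau_j\) is never an earlier-prescribed sum \(\tau_a+\tau_b\) with \(a,b<j\). Equal weights force \(w(\tau_a)=w(\tau_b)=w(\tau_j)\).
\begin{itemize}
\item If \(\tau_j\) is mixed, no earlier label of that weight exists.
\item If \(\tau_j\) is pure, \(2\tau_j\) is pure of the same type. But a sum of two earlier labels of that weight lands in a different family: mixed plus anything is mixed, and \(\mathsf P+\mathsf Q\) is mixed.
\end{itemize}
One must also ensure \(2\tau_j\ne\mathsf M_d\). This is automatic for pure \(\tau_j\), and for mixed \(\tau_j\) it amounts to \(d/2\notin S_{++}\). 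So the whole construction reduces to choosing \(d\in S_{++}\) with \(d/2\notin S_{++}\), together with a test polynomial \(f\).

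For the case split I take \(d:=\ell_1+\ell_2\) and \(f:=xy\) when \(\ell_1,\ell_2\) are both odd; then \(d/2\in S_{++}\) would force \(a=b=1/2\), which is impossible. When \(\ell_1\) is odd and \(\ell_2\) is even I take \(d:=\ell_2+2\ell_1\) and \(f:=xy^2\). If \((\ell_2+2\ell_1)/2=a\ell_2+b\ell_1\) with \(a,b\ge1\), then \((2a-1)\ell_2=2(1-b)\ell_1\le0\), a contradiction. The case \(\ell_1\) even, \(\ell_2\) odd is symmetric, with \(d:=2\ell_2+\ell_1\) and \(f:=x^2y\).

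In each case \(f\) vanishes on the two axes and is a perfect square in \(t\) on the cusp component, so \(f\ge0\) on \(\cZ_p\). On the other hand \(L_\beta(f)=\varphi(\mathsf M_d)<0\). Since \(\deg f\le3\le2n\), any \(\cZ_p\)-representing measure \(\mu\) would give \(0>L_\beta(f)=\int f\,d\mu\ge0\), a contradiction. Finally, every \(M(n+k;\beta)\) is the pullback of a positive definite finite Hankel matrix, hence is positive semidefinite; together with \(M_{\pm p}=0\) this gives the \(k\)-extension property for every \(k\).

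The step I expect to require the most care is the well-definedness of the diagonal entries in the three-family enumeration. Specifically, I must verify that the tie-breaking order (mixed, then \(\mathsf P\), then \(\mathsf Q\)) never lets a \(\mathsf P\)–\(\mathsf Q\) sum coincide with a pure diagonal label. I also need to check carefully that \(\mathsf P_0\) acts as the identity without creating collisions.
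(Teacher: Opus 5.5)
Your proposal is correct and follows the paper's proof essentially step for step. It uses the same labelled semigroup separating pure-$x$, pure-$y$ and mixed monomials (your $\mathsf P_0$ plays the role of the paper's identity $\mathsf e$), the same weight-ordered inductive Schur-complement construction with mixed labels first at each weight, and the same choices $d=\ell_2+2\ell_1,\ 2\ell_2+\ell_1,\ \ell_1+\ell_2$ with $f=xy^2,\ x^2y,\ xy$.

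Two small tidy-ups. In the odd--odd case, ``forces $a=b=1/2$'' is not an argument as stated; instead observe, as the paper does, that every element of $S_{++}$ is at least $\ell_1+\ell_2>d/2$. In the pure case of your diagonal check, your list omits the sum $\mathsf P_a+\mathsf P_a$; note that it lies in the $\mathsf P$-family and so cannot equal $2\mathsf Q_b$.
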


\begin{proof}
The zero set of $p$ is
\[
\cZ_p=\cZ_x\cup \cZ_y\cup \cZ_{x^{\ell_1}-y^{\ell_2}}.
\]
The last component is parametrized by $x=t^{\ell_2}$,
$y=t^{\ell_1}$, $t\in\RR$. Consider the additive subsemigroup
\[
S_{++}:=\{a\ell_2+b\ell_1:a,b\in\NN\}.
\]
Its elements are precisely the exponents associated with monomials
having positive powers of both variables.

We first record an auxiliary construction.

\medskip
\noindent\textbf{Auxiliary lemma.}
Suppose that $d\in S_{++}$ and $d/2\notin S_{++}$. Then there exists
an infinite bivariate sequence
$\beta=(\beta_{i,j})_{i,j\in\NN_0}$ such that
\[
M(r;\beta)\succeq0\quad(r\in\NN_0)
\qquad \text{and}\qquad
\beta_{i+\ell_1+1,j+1}
=
\beta_{i+1,j+\ell_2+1}
\quad(i,j\in\NN_0),
\]
and $\beta_{a,b}<0$ whenever $a,b\in\NN$ and
$a\ell_2+b\ell_1=d$.

\smallskip
\noindent\textit{Proof of the auxiliary lemma.}
Introduce the set
\[
\mathcal T
:=
\{\mathsf e\}
\,\dot\cup\,
\{\mathsf X_a:a\in\NN\}
\,\dot\cup\,
\{\mathsf Y_b:b\in\NN\}
\,\dot\cup\,
\{\mathsf M_s:s\in S_{++}\}.
\]
We make $\mathcal T$ into a commutative semigroup with identity
$\mathsf e$ by setting
\[
\begin{aligned}
\mathsf X_a+\mathsf X_c&:=\mathsf X_{a+c},&
\mathsf Y_b+\mathsf Y_e&:=\mathsf Y_{b+e},\\
\mathsf X_a+\mathsf Y_b&:=\mathsf M_{a\ell_2+b\ell_1},&
\mathsf X_a+\mathsf M_s&:=\mathsf M_{a\ell_2+s},\\
\mathsf Y_b+\mathsf M_s&:=\mathsf M_{b\ell_1+s},&
\mathsf M_s+\mathsf M_t&:=\mathsf M_{s+t}.
\end{aligned}
\]
Thus $\mathsf X_a$ and $\mathsf Y_b$ represent the pure monomials
$X^a$ and $Y^b$, whereas $\mathsf M_s$ represents the class of
mixed monomials $X^aY^b$, $a,b\geq1$, having parametrized exponent
$a\ell_2+b\ell_1=s$. In particular, pure and mixed classes are kept
distinct.

Define a weight on $\mathcal T$ by
\[
w(\mathsf e):=0,\qquad
w(\mathsf X_a):=a\ell_2,\qquad
w(\mathsf Y_b):=b\ell_1,\qquad
w(\mathsf M_s):=s.
\]
Enumerate $\mathcal T$ as
$\tau_0,\tau_1,\tau_2,\ldots$ in nondecreasing order of weight,
with $\tau_0=\mathsf e$, and with the mixed element placed before
the pure elements whenever several elements have the same weight.

We construct a function $\varphi:\mathcal T\to\RR$ such that
$\varphi(\mathsf M_d)<0$ and
\[
\bigl(\varphi(\sigma+\tau)\bigr)_{\sigma,\tau\in F}\succ0
\]
for every finite subset $F\subseteq\mathcal T$.

Prescribe an arbitrary negative value for $\varphi(\mathsf M_d)$.
Suppose inductively that
\[
H_{j-1}
:=
\bigl(\varphi(\tau_a+\tau_b)\bigr)_{a,b=0}^{j-1}
\succ0
\]
has already been constructed. After adjoining $\tau_j$, write
\[
H_j=
\begin{pmatrix}
H_{j-1}&v_j\\
v_j^T&\varphi(2\tau_j)
\end{pmatrix}.
\]
We retain all values in $v_j$ that have already been prescribed and
assign arbitrary values, say zero, to the remaining semigroup sums,
using the same value whenever two sums coincide.

The diagonal value $\varphi(2\tau_j)$ is still free. Indeed, if
$2\tau_j$ coincided with a sum of two elements already present at
this stage, other than $\tau_j+\tau_j$, equality of weights would
force both summands to have the same weight as $\tau_j$. If
$\tau_j=\mathsf M_s$ is mixed, this is impossible because the mixed
element is placed first among the elements of weight $s$. If
$\tau_j=\mathsf X_a$ is pure, only the sum of two pure $X$-elements
can be pure of type $\mathsf X$, and there is only one pure
$X$-element of a given weight. The same argument applies to a pure
$Y$-element. Finally, if $\tau_j=\mathsf e$, its diagonal is
$\mathsf e$ itself and is chosen positive at the initial step.

Moreover, $2\tau_j\neq\mathsf M_d$. This is immediate when
$\tau_j$ is pure or $\tau_j=\mathsf e$. If
$\tau_j=\mathsf M_s$, equality would imply $2s=d$, contrary to
$d/2\notin S_{++}$.

Hence $\varphi(2\tau_j)$ can be chosen sufficiently large that
\[
\varphi(2\tau_j)>v_j^TH_{j-1}^{-1}v_j.
\]
The Schur complement criterion gives $H_j\succ0$. Proceeding
inductively yields $\varphi$ such that every finite matrix
$\bigl(\varphi(\sigma+\tau)\bigr)_{\sigma,\tau\in F}$ is positive
definite.

Associate to each monomial $X^iY^j$ the label
\[
\lambda(i,j):=
\begin{cases}
\mathsf e, & i=j=0,\\
\mathsf X_i, & i\geq1,\ j=0,\\
\mathsf Y_j, & i=0,\ j\geq1,\\
\mathsf M_{i\ell_2+j\ell_1}, & i,j\geq1.
\end{cases}
\]
Then
\[
\lambda(i,j)+\lambda(a,b)=\lambda(i+a,j+b)
\]
for all $i,j,a,b\in\NN_0$. Define
\[
\beta_{i,j}:=\varphi\bigl(\lambda(i,j)\bigr).
\]
It follows that
\[
\beta_{i+a,j+b}
=
\varphi\bigl(\lambda(i,j)+\lambda(a,b)\bigr).
\]
Thus every moment matrix $M(r;\beta)$ is the pullback of a finite
positive definite matrix
$$\bigl(\varphi(\sigma+\tau)\bigr)_{\sigma,\tau\in F},$$ and hence
\[
M(r;\beta)\succeq0\qquad(r\in\NN_0).
\]

Furthermore, for all $i,j\in\NN_0$,
\[
\beta_{i+\ell_1+1,j+1}
=
\beta_{i+1,j+\ell_2+1},
\]
since both moments correspond to the mixed label with exponent
\[
(i+1)\ell_2+(j+1)\ell_1+\ell_1\ell_2.
\]
Finally, if $d=a\ell_2+b\ell_1$ with $a,b\in\NN$, then
$\beta_{a,b}=\varphi(\mathsf M_d)<0$. This proves the auxiliary
lemma.\hfill $\blacksquare$

\medskip
We now choose the obstruction index $d$, distinguishing three parity
cases.

Suppose first that $\ell_1$ is odd and $\ell_2$ is even, and set
\[
d:=\ell_2+2\ell_1.
\]
Then $d\in S_{++}$, corresponding to the moment $\beta_{1,2}$, while
\[
\frac d2=\frac{\ell_2}{2}+\ell_1<\ell_2+\ell_1.
\]
Since every element of $S_{++}$ is at least $\ell_1+\ell_2$, it
follows that $d/2\notin S_{++}$.

Suppose next that $\ell_1$ is even and $\ell_2$ is odd, and set
\[
d:=2\ell_2+\ell_1.
\]
Then $d\in S_{++}$, corresponding to $\beta_{2,1}$, whereas
$d/2=\ell_2+\ell_1/2<\ell_1+\ell_2$. Hence
$d/2\notin S_{++}$.

Finally, suppose that both $\ell_1$ and $\ell_2$ are odd, and set
\[
d:=\ell_1+\ell_2.
\]
Then $d\in S_{++}$, corresponding to $\beta_{1,1}$, and
$d/2<\ell_1+\ell_2$, so again $d/2\notin S_{++}$.

In each case, the auxiliary lemma gives a single infinite sequence
$\beta=(\beta_{i,j})_{i,j\in\NN_0}$ satisfying
\[
M(r;\beta)\succeq0\quad(r\in\NN_0)
\qquad
\text{and}
\qquad
\beta_{i+\ell_1+1,j+1}
=
\beta_{i+1,j+\ell_2+1}
\quad(i,j\in\NN_0).
\]
Thus
\[
XY(X^{\ell_1}-Y^{\ell_2})=0
\]
is a recursively generated column relation at every order.
Consequently,
\[
M_p(r)=M_{-p}(r)=0
\]
whenever the localizing matrices are defined.

Define
\[
f:=
\begin{cases}
xy^2,
& \ell_1\ \text{odd},\ \ell_2\ \text{even},\\
x^2y,
& \ell_1\ \text{even},\ \ell_2\ \text{odd},\\
xy,
& \ell_1,\ell_2\ \text{odd}.
\end{cases}
\]
By construction, $L_\beta(f)<0$. On the component
$\cZ_{x^{\ell_1}-y^{\ell_2}}$, parametrized by
$x=t^{\ell_2}$ and $y=t^{\ell_1}$, we have
\[
f(t^{\ell_2},t^{\ell_1})
=
\begin{cases}
\bigl(t^{\ell_2/2+\ell_1}\bigr)^2,
& \ell_1\ \text{odd},\ \ell_2\ \text{even},\\
\bigl(t^{\ell_2+\ell_1/2}\bigr)^2,
& \ell_1\ \text{even},\ \ell_2\ \text{odd},\\
\bigl(t^{(\ell_1+\ell_2)/2}\bigr)^2,
& \ell_1,\ell_2\ \text{odd}.
\end{cases}
\]
Moreover, $f$ vanishes on both coordinate axes. Hence
$f\geq0$ on $\cZ_p$, whereas $L_\beta(f)<0$.

Let $\beta^{(2n)}$ be the degree-$2n$ truncation of $\beta$. Since
all moment matrices are positive semidefinite and
$M_p(r)=M_{-p}(r)=0$ at every order, $M(n+k;\beta)$ is a positive
extension of $M(n;\beta)$ satisfying the required localizing
conditions for every $k\in\NN_0$. Thus $\beta^{(2n)}$ has the
$k$-extension property relative to $\{1,p,-p\}$ for every
$k\in\NN_0$.

If $\beta^{(2n)}$ admitted a $\cZ_p$-representing measure $\mu$, then
\[
0>L_\beta(f)=\int_{\cZ_p}f\,d\mu\geq0,
\]
a contradiction. Therefore, $\beta^{(2n)}$ admits no
$\cZ_p$-representing measure.
\end{proof}

The preceding constructions also yield explicit nonnegative polynomials
that fail to be sums of squares in the corresponding coordinate rings.

\begin{corollary}
\label{cor-after-3-3-v2}
Let \(\ell_1,\ell_2\in\NN\setminus\{1\}\) satisfy
\(\gcd(\ell_1,\ell_2)=1\), and let
\[
        p(x,y)
        =
        q(x,y)\bigl(x^{\ell_1}-y^{\ell_2}\bigr),
        \qquad
        q(x,y)\in\{x,y,xy\}.
\]
Define
\[
f(x,y)
:=
\begin{cases}
x,
& \text{if \(q=x\), \(\ell_1\) is odd, and \(\ell_2\) is even},\\[0.2em]
xy^2,
& \text{if \(q\in\{y,xy\}\), \(\ell_1\) is odd, and
\(\ell_2\) is even},\\[0.2em]
y,
& \text{if \(q=y\), \(\ell_1\) is even, and \(\ell_2\) is odd},\\[0.2em]
x^2y,
& \text{if \(q\in\{x,xy\}\), \(\ell_1\) is even, and
\(\ell_2\) is odd},\\[0.2em]
xy,
& \text{if \(\ell_1\) and \(\ell_2\) are both odd}.
\end{cases}
\]
Then \(f\) is nonnegative on \(\cZ_p\), but
\[
        f\notin\Sigma_{\{1,p,-p\},n}
        \qquad
        \text{for every }n\in\NN.
\]
Equivalently, the class of \(f\) is not a sum of squares in the coordinate ring \(\mathbb R[x,y]/(p)\).
\end{corollary}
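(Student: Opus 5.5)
The plan is to copy the separating-functional argument from Corollary~\ref{pos-not-sos-v2}, now using the sequences built in the proofs of Theorems~\ref{theo:case-1.2} and~\ref{theo:case-1.3}. There are two things to check: that \(f\ge0\) on \(\cZ_p\), and that some Riesz functional \(L_\beta\) which is nonnegative on all squares and annihilates the ideal \((p)\) satisfies \(L_\beta(f)<0\).

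First I will check that the table of \(f\) matches the choices made in those proofs. For \(q=y\), the proof of Theorem~\ref{theo:case-1.2} uses \(f=xy^2\), \(y\), \(xy\) in the three parity cases. For \(q=xy\), Theorem~\ref{theo:case-1.3} uses \(xy^2\), \(x^2y\), \(xy\). The case \(q=x\) follows from the case \(q=y\) by the interchange \(x\leftrightarrow y\), which also swaps the roles of \(\ell_1\) and \(\ell_2\). This interchange turns \(y\) into \(x\) and \(xy^2\) into \(x^2y\), so it sends the row "\(q=y\), \(\ell_1\) even, \(\ell_2\) odd, \(f=y\)" to the row "\(q=x\), \(\ell_1\) odd, \(\ell_2\) even, \(f=x\)". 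It likewise sends the row "\(q=y\), \(\ell_1\) odd, \(\ell_2\) even, \(f=xy^2\)" to the row "\(q=x\), \(\ell_1\) even, \(\ell_2\) odd, \(f=x^2y\)", and it fixes \(xy\).

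Nonnegativity is then immediate from the computations in those proofs. On the component \(x^{\ell_1}=y^{\ell_2}\), parametrized by \(x=t^{\ell_2}\), \(y=t^{\ell_1}\), each listed \(f\) becomes a square of a monomial in \(t\). On the adjoined axes, \(f\) vanishes: \(x\), \(x^2y\), \(xy\) vanish on \(\cZ_x\), and \(y\), \(xy^2\), \(xy\) vanish on \(\cZ_y\). Each \(f\) is only ever paired with an axis on which it vanishes. In particular, \(f=x\) appears only when \(q=x\), and \(f=y\) appears only when \(q=y\).

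For the algebraic part, the cited proofs supply an infinite sequence \(\beta\) with \(M(r;\beta)\succeq0\) for every \(r\), and \(L_\beta(f)<0\). They also give the recursive relation \(p(X,Y)=0\) at all orders, for example \(\beta_{i+\ell_1,j+1}=\beta_{i,j+\ell_2+1}\) when \(q=y\). I will deduce from this that \(L_\beta(p\,x^iy^j)=0\) for every monomial \(x^iy^j\), and hence that \(L_\beta(ph)=0\) for every \(h\in\mathbb R[x,y]\). Positivity of every moment matrix gives \(L_\beta(g^2)\ge0\) for every \(g\). Now suppose \(f=\sum_i g_i^2+ph\). Applying \(L_\beta\) yields \(0>L_\beta(f)\ge0\), a contradiction. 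This rules out any sum-of-squares representation modulo \((p)\), and so also \(f\in\Sigma_{\{1,p,-p\},n}\) for every \(n\), because the terms \(-p\,g^2\) and \(p\,g^2\) in such a representation lie in \((p)\).

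The only step needing care is the annihilation identity \(L_\beta(ph)=0\) for all \(h\). The shifted moment equalities give it directly, one monomial at a time: for \(q=xy\) the identity reads \(L_\beta(xy\,x^{\ell_1}x^iy^j)=L_\beta(xy\,y^{\ell_2}x^iy^j)\), which is exactly \(\beta_{i+\ell_1+1,j+1}=\beta_{i+1,j+\ell_2+1}\). The identity for general \(h\) then follows by linearity.
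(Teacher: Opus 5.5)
Your proposal is correct and follows the paper's own proof, which is exactly the separating-functional argument of Corollary~\ref{pos-not-sos-v2} applied to the sequences built in Theorems~\ref{theo:case-1.2} and~\ref{theo:case-1.3}. As in the proof of Theorem~\ref{theo:case-1.2}, the case $q=x$ is handled by the $x\leftrightarrow y$ symmetry. Your additional checks fill in details the paper leaves implicit: matching the table of $f$ under the interchange, confirming that $f$ vanishes on the adjoined axes, and deriving $L_\beta(ph)=0$ monomial by monomial from the shifted moment identities.
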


The proof is identical to that of Corollary~\ref{pos-not-sos-v2}, using the separating Riesz functionals and the corresponding obstruction moments constructed in Theorems~\ref{theo:case-1.2} and~\ref{theo:case-1.3}.\\

We next consider the reducible curve obtained by adjoining the line \(\cZ_y\) to \(\cZ_{y-x^\ell}\), and show that the obstruction to property \((S_{n,k})\) persists.

\begin{theorem}
\label{theo:case-6}
Let \(p(x,y):=y(y-x^\ell)\), where
\(\ell\in\NN\setminus\{1\}\). Then, for every \(n\ge\ell+1\),
there exists a truncated sequence \(\beta^{(2n)}\) having the
\(k\)-extension property relative to \(\{1,p,-p\}\) for every
\(k\in\NN_0\), but admitting no \(\cZ_p\)-representing measure.
\end{theorem}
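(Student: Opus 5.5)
We follow the labeled-semigroup scheme used in the auxiliary lemma of Theorem~\ref{theo:case-1.2}, now adapted to the relation \(Y^2=X^\ell Y\). The zero set is \(\cZ_p=\cZ_y\cup\cZ_{y-x^\ell}\), and the second component is parametrized by \(x=t\), \(y=t^\ell\). Every mixed monomial \(X^iY^j\) with \(j\ge1\) should behave like \(t^{i+\ell j}\). Pure monomials \(X^i\) must be kept separate, because they also see the line \(\cZ_y\).

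\textbf{Step 1: the labeled semigroup.} Let \(S_+:=\{i+\ell j:i\in\NN_0,\ j\in\NN\}=\{s\in\NN_0:s\ge\ell\}\). Introduce
\[
\mathcal T:=\{\mathsf P_a:a\in\NN_0\}\,\dot\cup\,\{\mathsf M_s:s\in S_+\},
\]
with the operations
\[
\mathsf P_a+\mathsf P_b:=\mathsf P_{a+b},\qquad
\mathsf P_a+\mathsf M_s:=\mathsf M_{a+s},\qquad
\mathsf M_s+\mathsf M_t:=\mathsf M_{s+t}.
\]
Use the weights \(w(\mathsf P_a)=a\) and \(w(\mathsf M_s)=s\).

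\textbf{Step 2: the positive definite function.} Fix \(d\in S_+\) with \(d/2\notin S_+\). Enumerate \(\mathcal T\) by nondecreasing weight, placing the mixed element before the pure one at equal weight. Prescribe \(\varphi(\mathsf M_d)<0\) and build \(\varphi\) inductively by the Schur complement argument.

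The freeness of each diagonal value \(\varphi(2\tau_j)\) follows word for word as in Theorem~\ref{theo:case-1.2}. Equal weights force equal-weight summands. A mixed element comes first at its weight. A pure double \(2\mathsf P_a\) is pure, so it cannot coincide with a mixed sum. Finally, \(2\mathsf M_s=\mathsf M_d\) is excluded by \(d/2\notin S_+\).

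Then set \(\beta_{i,0}:=\varphi(\mathsf P_i)\) and \(\beta_{i,j}:=\varphi(\mathsf M_{i+\ell j})\) for \(j\ge1\). The labeling \(\lambda\) is additive, so every \(M(r;\beta)\) is a pullback of a positive definite matrix \((\varphi(\sigma+\tau))_{\sigma,\tau\in F}\), hence \(M(r;\beta)\succeq0\) for all \(r\). Moreover,
\[
\beta_{i,j+2}=\varphi(\mathsf M_{i+\ell(j+2)})=\beta_{i+\ell,j+1}
\]
for all \(i,j\). Thus \(Y(Y-X^\ell)=0\) is a recursively generated relation at every order, which gives \(M_p(r)=M_{-p}(r)=0\) whenever these matrices are defined. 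Consequently, \(\beta^{(2n)}\) has the \(k\)-extension property for every \(k\in\NN_0\).

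\textbf{Step 3: the obstruction.} Split according to the parity of \(\ell\).
\begin{itemize}
\item If \(\ell\) is even, take \(d:=\ell\) and \(f:=y\). Then \(L_\beta(f)=\beta_{0,1}=\varphi(\mathsf M_\ell)<0\). On \(\cZ_{y-x^\ell}\) we have \(f=(x^{\ell/2})^2\), and \(f=0\) on \(\cZ_y\). Also \(d/2=\ell/2<\ell\), so \(d/2\notin S_+\).
\item If \(\ell\) is odd, take \(d:=\ell+1\) and \(f:=xy\). Then \(L_\beta(f)=\beta_{1,1}<0\). On the curve \(f=x^{\ell+1}=(x^{(\ell+1)/2})^2\), and again \(f=0\) on the axis. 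Since \(\ell\ge2\), we have \((\ell+1)/2<\ell\), so \(d/2\notin S_+\).
\end{itemize}
In both cases \(\deg f\le2\le2n\), so \(L_\beta(f)\) is determined by \(\beta^{(2n)}\). Any \(\cZ_p\)-representing measure \(\mu\) would give
\[
0>L_\beta(f)=\int_{\cZ_p} f\,d\mu\ge0,
\]
a contradiction.

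The main point requiring care is Step 2. One must check that pure and mixed labels never collide in a way that predetermines a diagonal entry or the distinguished value \(\varphi(\mathsf M_d)\). One must also confirm that \(\lambda(i,j)+\lambda(a,b)=\lambda(i+a,j+b)\) holds in all pure/mixed combinations. Both checks reduce to the definitions, because the weight \(i+\ell j\) of a mixed monomial is exactly its \(t\)-exponent on the curve.
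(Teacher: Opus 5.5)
Your proposal is correct and follows essentially the same route as the paper. The paper also runs the labeled-semigroup auxiliary construction of Theorem~\ref{theo:case-1.2} in the boundary case $\ell_1=\ell$, $\ell_2=1$, after noting $y(y-x^\ell)=-y(x^\ell-y)$, with the same semigroup $S_+$, the same obstruction indices ($d=\ell$ for even $\ell$, $d=\ell+1$ for odd $\ell$) and the same separating polynomials ($y$ or $xy$); you simply write that construction out directly for the relation $Y(Y-X^\ell)=0$ instead of citing the boundary case of the earlier lemma.
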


\begin{proof}
Since \(y(y-x^\ell)=-y(x^\ell-y)\), the generator families
associated with \(y(y-x^\ell)\) and \(y(x^\ell-y)\) coincide, i.e.,
        $\{1,p,-p\}
        =
        \{1,y(x^\ell-y),-y(x^\ell-y)\}.$
Although Theorem~\ref{theo:case-1.2} is stated under the assumption
that both exponents are greater than one, its proof remains valid in
the boundary case \(\ell_1=\ell\), \(\ell_2=1\). Indeed, the component
\(\cZ_{x^\ell-y}\) is parametrized by \(x=t\), \(y=t^\ell\), where
\(t\in\mathbb R\), and the corresponding mixed exponents form the
additive subsemigroup
\(S_+=\{a+b\ell:a\in\NN_0,\ b\in\NN\}\).

Suppose first that \(\ell\) is even. Set \(d:=\ell\). Then
\(d\in S_+\), whereas \(d/2\notin S_+\), since every element of
\(S_+\) is at least \(\ell\). The auxiliary construction in the proof
of Theorem~\ref{theo:case-1.2} therefore gives an infinite sequence
\(\beta=(\beta_{i,j})_{i,j\in\NN_0}\) such that all moment matrices
are positive semidefinite,
\(\beta_{i+\ell,j+1}=\beta_{i,j+2}\) for all \(i,j\in\NN_0\), and
\(\beta_{0,1}=\varphi(\mathsf M_\ell)<0\). Under the parametrization,
\(y=t^\ell=(t^{\ell/2})^2\), while \(y=0\) on \(\cZ_y\). Hence
\(y\ge0\) on \(\cZ_p\), whereas
\(L_\beta(y)=\beta_{0,1}<0\).

Suppose now that \(\ell\) is odd. Set \(d:=\ell+1\). Then
\(d\in S_+\), while \(d/2\notin S_+\), since
\((\ell+1)/2<\ell\) and every element of \(S_+\) is at least
\(\ell\). The same auxiliary construction gives an infinite sequence
\(\beta=(\beta_{i,j})_{i,j\in\NN_0}\) such that all moment matrices
are positive semidefinite,
\(\beta_{i+\ell,j+1}=\beta_{i,j+2}\) for all \(i,j\in\NN_0\), and
\(\beta_{1,1}=\varphi(\mathsf M_{\ell+1})<0\). Under the
parametrization,
\(xy=t^{\ell+1}=(t^{(\ell+1)/2})^2\), while \(xy=0\) on
\(\cZ_y\). Thus \(xy\ge0\) on \(\cZ_p\), whereas
\(L_\beta(xy)=\beta_{1,1}<0\).

In either parity case,
\(\beta_{i+\ell,j+1}=\beta_{i,j+2}\) for all \(i,j\in\NN_0\).
Thus \(Y(X^\ell-Y)=0\) is a recursively generated column relation at
every order. Since
\(Y(Y-X^\ell)=-Y(X^\ell-Y)\), we have
\[
        M_p(r)=M_{-p}(r)=0
\]
whenever the localizing matrices are defined. Consequently,
\(\beta^{(2n)}\) has the \(k\)-extension property relative to
\(\{1,p,-p\}\) for every \(k\in\NN_0\).

In either parity case, the constructed Riesz functional takes a
negative value on a polynomial that is nonnegative on \(\cZ_p\).
Such a functional cannot arise from a \(\cZ_p\)-representing measure.
Therefore, \(\beta^{(2n)}\) admits no
\(\cZ_p\)-representing measure.
\end{proof}

\begin{remark}
\label{rem:asymmetry-y-factor}
The preceding construction is asymmetric and does not apply directly
to \(y(x-y^\ell)\). Indeed, the component
\(\cZ_{x-y^\ell}\) is parametrized by \(x=t^\ell\), \(y=t\), so the
mixed exponents are of the form \(a\ell+b\), where
\(a\in\NN_0\) and \(b\in\NN\). Thus the corresponding exponent set
contains every positive integer. Consequently, the half-indices used
to create the negative diagonal obstructions in the proof of
Theorem~\ref{theo:case-6} already belong to the relevant exponent
set, and the same construction cannot be applied.

This reflects a genuine difference between the two families:
\(y(x-y^\ell)\) satisfies property \((S_{n,k})\) for a finite
extension parameter \(k\), as shown in
Theorem~\ref{theo:case-5}.
\end{remark}

We conclude this family of examples by showing that the same obstruction persists when both coordinate axes are adjoined to the curve \(\cZ_{y-x^\ell}\).

\begin{theorem}
\label{theo:case-7}
Let \(p(x,y):=xy(y-x^\ell)\), where
\(\ell\in\NN\setminus\{1\}\). Then, for every \(n\ge\ell+2\),
there exists a truncated sequence \(\beta^{(2n)}\) having the
\(k\)-extension property relative to \(\{1,p,-p\}\) for every
\(k\in\NN_0\), but admitting no \(\cZ_p\)-representing measure.
\end{theorem}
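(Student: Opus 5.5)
The plan is to reduce to the boundary case \(\ell_1=\ell\), \(\ell_2=1\) of Theorem~\ref{theo:case-1.3}, exactly as Theorem~\ref{theo:case-6} was reduced to Theorem~\ref{theo:case-1.2}. Since \(xy(y-x^\ell)=-xy(x^\ell-y)\), the generator families \(\{1,p,-p\}\) for \(p=xy(y-x^\ell)\) and for \(xy(x^\ell-y)\) coincide. The component \(\cZ_{x^\ell-y}\) is parametrized by \(x=t\), \(y=t^\ell\). The mixed exponents form the semigroup
\[
S_{++}=\{a+b\ell:a,b\in\NN\},
\]
whose smallest element is \(\ell+1\).

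First I will check that the auxiliary lemma in the proof of Theorem~\ref{theo:case-1.3} never uses \(\ell_2>1\) or coprimality. The semigroup \(\mathcal T\) with elements \(\mathsf e,\mathsf X_a,\mathsf Y_b,\mathsf M_s\) makes sense as before. The weight ordering is unchanged, with mixed elements placed first at each weight. The argument that the diagonal value \(\varphi(2\tau_j)\) is still free only uses weights and the pure/mixed distinction, and the additivity \(\lambda(i,j)+\lambda(a,b)=\lambda(i+a,j+b)\) is formal. So the lemma holds with \(\ell_2=1\). It yields an infinite sequence \(\beta\) with all \(M(r;\beta)\succeq0\) and \(\beta_{i+\ell+1,j+1}=\beta_{i+1,j+2}\). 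The latter means \(XY(X^\ell-Y)=0\) is a recursively generated relation at every order, hence \(M_p(r)=M_{-p}(r)=0\) whenever these matrices are defined. Here \(n\ge\ell+2=\deg p\) guarantees that the column \(p(X,Y)\) appears in \(M(n)\).

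The main step is choosing the obstruction index \(d\in S_{++}\) with \(d/2\notin S_{++}\), together with a test polynomial \(f\) that is nonnegative on \(\cZ_p\).
\begin{itemize}
\item If \(\ell\) is odd, take \(d:=\ell+1\), which corresponds to \(\beta_{1,1}\), and \(f:=xy\). Then \(d/2=(\ell+1)/2<\ell+1\), so \(d/2\notin S_{++}\). On the curve \(f=t^{\ell+1}=(t^{(\ell+1)/2})^2\).
\item If \(\ell\) is even, \(xy=t^{\ell+1}\) is not a square, so take \(d:=2+\ell\), which corresponds to \(\beta_{2,1}\), and \(f:=x^2y=(t^{1+\ell/2})^2\). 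Then \(d/2=1+\ell/2<\ell+1\) because \(\ell\ge2\), so \(d/2\notin S_{++}\).
\end{itemize}
In both cases \(f\) vanishes on both axes \(\cZ_x\) and \(\cZ_y\). Hence \(f\ge0\) on \(\cZ_p=\cZ_x\cup\cZ_y\cup\cZ_{y-x^\ell}\), while \(L_\beta(f)=\varphi(\mathsf M_d)<0\). I expect this choice of \(d\), and the verification \(d/2\notin S_{++}\), to be the only delicate point. It is where \(\ell\ge2\) is needed: for \(\ell=1\) one would have \(d/2=3/2\), and this step is what distinguishes the result from the positive case in Remark~\ref{rem:asymmetry-y-factor}.

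Finally, take \(\beta^{(2n)}\) to be the truncation of \(\beta\). For every \(k\in\NN_0\), the matrix \(M(n+k;\beta)\) is a positive extension of \(M(n;\beta)\) satisfying \(M_{\pm p}(n+k)=0\), so \(\beta^{(2n)}\) has the \(k\)-extension property. Its degree is adequate because \(\deg f=3\le 2n\). If \(\mu\) were a \(\cZ_p\)-representing measure, then
\[
0>L_\beta(f)=\int_{\cZ_p} f\,d\mu\ge0,
\]
a contradiction. Hence \(\beta^{(2n)}\) admits no \(\cZ_p\)-representing measure.
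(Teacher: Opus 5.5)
Your proposal is correct and follows the paper's proof essentially step for step. It uses the same reduction to the boundary case \(\ell_1=\ell\), \(\ell_2=1\) of the auxiliary construction in Theorem~\ref{theo:case-1.3}, the same obstruction indices (\(d=\ell+1\) with test polynomial \(xy\) for odd \(\ell\), and \(d=\ell+2\) with \(x^2y\) for even \(\ell\)), and the same separating-functional contradiction.

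One correction concerns only your side remark. For \(\ell=1\), which is odd, your own recipe gives \(d=2\) and \(d/2=1\notin S_{++}=\{2,3,\ldots\}\), so this step does not actually use \(\ell\ge2\); the value \(d/2=3/2\) comes from misapplying the even-case recipe. Also, Remark~\ref{rem:asymmetry-y-factor} concerns the different family \(y(x-y^\ell)\), not this boundary case.
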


\begin{proof}
Since \(xy(y-x^\ell)=-xy(x^\ell-y)\), we have
\[
        \{1,p,-p\}
        =
        \{1,xy(x^\ell-y),-xy(x^\ell-y)\}.
\]
Although Theorem~\ref{theo:case-1.3} is stated under the assumption
that both exponents are greater than one, its auxiliary construction
remains valid in the boundary case \(\ell_1=\ell\), \(\ell_2=1\).
Indeed, the component \(\cZ_{x^\ell-y}\) is parametrized by
\(x=t\), \(y=t^\ell\), where \(t\in\mathbb R\), and the relevant
mixed exponent semigroup becomes
\(S_{++}=\{a+b\ell:a,b\in\NN\}\).

Suppose first that \(\ell\) is even. Set \(d:=\ell+2\). Then
\(d\in S_{++}\), corresponding to the moment \(\beta_{2,1}\), while
\(d/2=\ell/2+1<\ell+1\). Since every element of \(S_{++}\) is at
least \(\ell+1\), we have \(d/2\notin S_{++}\). The auxiliary
construction in the proof of Theorem~\ref{theo:case-1.3} therefore
yields an infinite sequence \(\beta=(\beta_{i,j})_{i,j\in\NN_0}\)
such that all moment matrices are positive semidefinite,
\(\beta_{i+\ell+1,j+1}=\beta_{i+1,j+2}\) for all
\(i,j\in\NN_0\), and
\(\beta_{2,1}=\varphi(\mathsf M_{\ell+2})<0\). Under the
parametrization,
\(x^2y=t^{\ell+2}=(t^{\ell/2+1})^2\), while \(x^2y\) vanishes on
both coordinate axes. Hence \(x^2y\ge0\) on \(\cZ_p\), whereas
\(L_\beta(x^2y)=\beta_{2,1}<0\).

Suppose now that \(\ell\) is odd. Set \(d:=\ell+1\). Then
\(d\in S_{++}\), corresponding to the moment \(\beta_{1,1}\), while
\(d/2=(\ell+1)/2<\ell+1\). Hence \(d/2\notin S_{++}\). The same
auxiliary construction gives an infinite sequence
\(\beta=(\beta_{i,j})_{i,j\in\NN_0}\) such that all moment matrices
are positive semidefinite,
\(\beta_{i+\ell+1,j+1}=\beta_{i+1,j+2}\) for all
\(i,j\in\NN_0\), and
\(\beta_{1,1}=\varphi(\mathsf M_{\ell+1})<0\). Under the
parametrization,
\(xy=t^{\ell+1}=(t^{(\ell+1)/2})^2\), while \(xy\) vanishes on both
coordinate axes. Thus \(xy\ge0\) on \(\cZ_p\), whereas
\(L_\beta(xy)=\beta_{1,1}<0\).

In either parity case,
\(\beta_{i+\ell+1,j+1}=\beta_{i+1,j+2}\) for all
\(i,j\in\NN_0\). Thus \(XY(X^\ell-Y)=0\) is a recursively generated
column relation at every order. Since
\(XY(Y-X^\ell)=-XY(X^\ell-Y)\), we have
\[
        M_p(r)=M_{-p}(r)=0
\]
whenever the localizing matrices are defined. Consequently,
\(\beta^{(2n)}\) has the \(k\)-extension property relative to
\(\{1,p,-p\}\) for every \(k\in\NN_0\).

According to the parity of \(\ell\), the constructed Riesz functional
takes a negative value on \(x^2y\) or \(xy\), although the
corresponding polynomial is nonnegative on \(\cZ_p\). Hence it cannot
arise from a \(\cZ_p\)-representing measure. Therefore,
\(\beta^{(2n)}\) admits no \(\cZ_p\)-representing measure.
\end{proof}

\begin{corollary}
\label{cor-after-3-7-v2}
Let \(\ell\in\NN\setminus\{1\}\), and let
\[
        p(x,y)
        \in
        \{
        y(y-x^\ell),
        \ x(x-y^\ell),
        \ xy(y-x^\ell),
        \ xy(x-y^\ell)
        \}.
\]
Define
\[
f(x,y)
:=
\begin{cases}
y,
& \text{if \(p(x,y)=y(y-x^\ell)\) and \(\ell\) is even},\\[0.2em]
x,
& \text{if \(p(x,y)=x(x-y^\ell)\) and \(\ell\) is even},\\[0.2em]
x^2y,
& \text{if \(p(x,y)=xy(y-x^\ell)\) and \(\ell\) is even},\\[0.2em]
xy^2,
& \text{if \(p(x,y)=xy(x-y^\ell)\) and \(\ell\) is even},\\[0.2em]
xy,
& \text{if \(\ell\) is odd}.
\end{cases}
\]
Then \(f\) is nonnegative on \(\cZ_p\), but
\[
        f\notin\Sigma_{\{1,p,-p\},n}
        \qquad
        \text{for every }n\in\NN.
\]
Equivalently, \(f\) is not a sum of squares in the coordinate ring
\(\mathbb R[x,y]/(p)\).
\end{corollary}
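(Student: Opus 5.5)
The plan is to follow the separating-functional argument of Corollary~\ref{pos-not-sos-v2} verbatim, using the infinite sequences \(\beta\) constructed in the proofs of Theorems~\ref{theo:case-6} and~\ref{theo:case-7}. First I reduce by symmetry. Interchanging \(x\) and \(y\) carries \(x(x-y^\ell)\) to \(y(y-x^\ell)\) and \(xy(x-y^\ell)\) to \(xy(y-x^\ell)\). It also carries the listed \(f\) to the corresponding one: \(x\leftrightarrow y\), \(xy^2\leftrightarrow x^2y\), and \(xy\) stays fixed. Since both nonnegativity on \(\cZ_p\) and the sum-of-squares property in the coordinate ring are invariant under this swap, it suffices to treat \(p=y(y-x^\ell)\) and \(p=xy(y-x^\ell)\).

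Next comes nonnegativity. The zero set decomposes as \(\cZ_y\cup\cZ_{y-x^\ell}\), together with \(\cZ_x\) in the second family. The chosen \(f\) (namely \(y\), \(x^2y\), or \(xy\)) vanishes on every coordinate axis that occurs. On the parametrized component \(x=t\), \(y=t^\ell\), it becomes a square: \(t^\ell=(t^{\ell/2})^2\) or \(t^{\ell+2}=(t^{\ell/2+1})^2\) for \(\ell\) even, and \(t^{\ell+1}=(t^{(\ell+1)/2})^2\) for \(\ell\) odd. These are exactly the computations already carried out in the proofs of Theorems~\ref{theo:case-6} and~\ref{theo:case-7}, so \(f\ge0\) on \(\cZ_p\).

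Then I obtain the separating functional. Take the infinite sequence \(\beta\) from the relevant proof and let \(L_\beta\) be its Riesz functional on all of \(\mathbb R[x,y]\). Every \(M(r;\beta)\succeq0\), so \(L_\beta(g^2)\ge0\) for all \(g\). The relation \(Y(X^\ell-Y)=0\), respectively \(XY(X^\ell-Y)=0\), holds at every order via the identities \(\beta_{i+\ell,j+1}=\beta_{i,j+2}\), respectively \(\beta_{i+\ell+1,j+1}=\beta_{i+1,j+2}\). Hence \(L_\beta(x^ay^b p)=0\) for every monomial, and by linearity \(L_\beta(ph)=0\) for all \(h\). Finally, \(L_\beta(f)<0\) by construction: it equals \(\beta_{0,1}\), \(\beta_{2,1}\), or \(\beta_{1,1}\) according to the case.

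To conclude, suppose \(f=\sum g_i^2+ph\). Applying \(L_\beta\) gives \(0>L_\beta(f)\ge0\), a contradiction. Membership in \(\Sigma_{\{1,p,-p\},n}\) would give exactly such a representation, because the terms \(p f_{1j}^2\) and \(-p f_{2j}^2\) combine into \(ph\).

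The only step needing care is the passage from the recursive column relations to \(L_\beta(ph)=0\) for arbitrary \(h\), as opposed to merely vanishing localizing matrices at each truncation. I expect it to be immediate, since the moment identities hold for all \(i,j\in\NN_0\) and therefore give \(L_\beta(x^iy^jp)=0\) for every monomial directly.
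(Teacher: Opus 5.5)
Your proposal is correct and is essentially the paper's proof. The paper also argues exactly as in Corollary~\ref{pos-not-sos-v2}, using the infinite sequences $\beta$ built in the proofs of Theorems~\ref{theo:case-6} and~\ref{theo:case-7}, which satisfy $L_\beta(g^2)\ge 0$, $L_\beta(ph)=0$ for all $h$, and $L_\beta(f)<0$, and it handles $x(x-y^\ell)$ and $xy(x-y^\ell)$ by interchanging $x$ and $y$. The step you flagged is indeed immediate: the identities $\beta_{i+\ell,j+1}=\beta_{i,j+2}$ and $\beta_{i+\ell+1,j+1}=\beta_{i+1,j+2}$ hold for all $i,j\in\NN_0$, so they give $L_\beta(ph)=0$ for every polynomial $h$ directly.
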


The proof is identical to that of Corollary~\ref{pos-not-sos-v2}, using the separating Riesz functionals and obstruction moments constructed in Theorems~\ref{theo:case-6} and~\ref{theo:case-7}, together with the symmetric cases obtained by interchanging \(x\) and \(y\).

\section{Generator families \(\{1,p,-p\}\) satisfying property \((S_{n,k})\)}
\label{sec:Snk}

In this section, we establish property \((S_{n,k})\) for several families of planar algebraic curves and provide admissible bounds for the extension parameter \(k\). We first consider curves defined by \(x^{\ell_1}y^{\ell_2}-1\) and their monomial multiples; see Theorems~\ref{theo:case-2}, \ref{theo:case-3}, and \ref{theo:case-4}. We then treat the reducible curves defined by \(y(x-y^\ell)\) and, by symmetry, \(x(y-x^\ell)\); see Theorem~\ref{theo:case-5}. In each case, the proof reduces the truncated moment problem on the curve to a one-dimensional moment problem through a suitable parametrization, yielding explicit control of the required extension order. As a consequence, we obtain degree-bounded sums-of-squares certificates for polynomials that are strictly positive on the corresponding algebraic sets; see Corollaries~\ref{cor-after-4-1}, \ref{cor-after-4-4}, and \ref{cor-after-4-6}.\\

We begin with the irreducible curves defined by
\(x^{\ell_1}y^{\ell_2}=1\). Their Laurent parametrization reduces the
corresponding truncated moment problem to a strong truncated Hamburger
moment problem.

\begin{theorem}\label{theo:case-2}
Let $\ell_1,\ell_2\in\mathbb{N}\setminus\{1\}$ satisfy
$\gcd(\ell_1,\ell_2)=1$, and let
$
p(x,y):=x^{\ell_1}y^{\ell_2}-1.
$
Then, for every $n\geq \ell_1+\ell_2$, the generator family
$\{1,p,-p\}$ satisfies property $(S_{n,\ell_1+\ell_2-1})$.
\end{theorem}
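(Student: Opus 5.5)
The plan is to reduce the problem to a strong truncated Hamburger (Laurent) moment problem via the parametrization of $\cZ_p$. Since $\gcd(\ell_1,\ell_2)=1$, choose integers $a,b$ with $a\ell_1+b\ell_2=1$. The curve $x^{\ell_1}y^{\ell_2}=1$ is then parametrized by
\[
x=t^{\ell_2},\qquad y=t^{-\ell_1},\qquad t\in\mathbb R\setminus\{0\},
\]
possibly after a sign adjustment when $\ell_1$ or $\ell_2$ is even. Indeed, if $x^{\ell_1}y^{\ell_2}=1$, then $t:=x^{b}y^{-a}$ satisfies $t^{\ell_2}=x^{b\ell_2}y^{-a\ell_2}=x\cdot x^{-a\ell_1}y^{-a\ell_2}=x$, and similarly $t^{-\ell_1}=y$. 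For the case analysis I would handle the parity of $\ell_1,\ell_2$ explicitly: at most one of them is even, and the real points are exactly the image of this map. Under the map, $x^iy^j\mapsto t^{i\ell_2-j\ell_1}$, so every monomial of degree at most $m$ corresponds to a Laurent monomial $t^s$ with $-m\ell_1\le s\le m\ell_2$. Conversely, every integer $s$ in a suitable range is attained by some monomial of controlled degree.

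Given $\beta^{(2n)}$ with a positive extension $M(n+k)$, $k=\ell_1+\ell_2-1$, satisfying $M_p(n+k)=0$, I would first use recursive generation from $p(X,Y)=0$ and positivity. The point is to show that the columns satisfy $X^{i+\ell_1}Y^{j+\ell_2}=X^iY^j$ in $M(n+k)$, which follows from $M_p(n+k)=0$ and positivity via Cauchy--Schwarz, as in \cref{lem:finite-degree-radical-transfer}. Consequently the moment $\beta_{i,j}$ with $i+j\le 2(n+k)$ depends only on $s=i\ell_2-j\ell_1$. This defines a Laurent sequence $\gamma_s$ on an interval $[-2N_-,2N_+]$. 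I would show that its strong Hankel matrix, indexed by $t^{-N_-},\dots,t^{N_+}$, is a compression (through representative monomials of degree $\le n+k$) of $M(n+k)$, and hence is positive semidefinite. The key combinatorial step is to choose, for each exponent $s$ in an interval containing all exponents $i\ell_2-j\ell_1$ with $i+j\le n$ (plus a margin of one on each side), a representative monomial of degree at most $n+k$. The bound $k=\ell_1+\ell_2-1$ should come precisely from this: to reach an exponent $s$ using $x^iy^j$, one may need to add up to $\ell_1-1$ extra factors of $x$ or $\ell_2-1$ factors of $y$ beyond the natural degree, using multiplication by $x^{\ell_1}y^{\ell_2}\equiv1$ to adjust. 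This residue argument is based on Frobenius-type reasoning for the coprime pair.

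Once a positive semidefinite strong Hankel matrix extending the original data by at least one degree on each side is available, I would invoke the solution of the strong truncated Hamburger moment problem. A positive extension of the Laurent Hankel matrix by one step on each side yields a representing measure on $\mathbb R\setminus\{0\}$; this is the analogue of $(S_{n,1})$ for $\{1\}$ in \cite{CF91}, applied after multiplying by a power of $t$, or alternatively of the $yx^\ell=1$ result \cite[Th.~4.1]{z4}. Pushing this measure forward under $t\mapsto(t^{\ell_2},t^{-\ell_1})$ gives a $\cZ_p$-representing measure for $\beta^{(2n)}$, since moments match exponent-by-exponent. The converse direction is standard via Richter--Tchakaloff, as in the proof of \cref{prop:independence-of-p}.

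The main obstacle is the degree bookkeeping. One must verify that every needed exponent, including the extra one on each side required by the strong Hamburger flat-extension criterion (the endpoint moment must be positive-semidefinite consistent, i.e.\ the last column lies in the span), is realized by a monomial of degree at most $n+\ell_1+\ell_2-1$. One must also verify that no zero atom is needed at $t=0$. I would first try to prove directly that the interval of exponents represented in degree $\le n+k$ covers $[-(n+1)\ell_1,(n+1)\ell_2]$ contiguously.
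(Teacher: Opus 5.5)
Your plan is essentially the paper's proof. It uses the Laurent parametrization $x=t^{\ell_2}$, $y=t^{-\ell_1}$, a residue argument giving representatives of degree at most $n+\ell_1+\ell_2-1$ for every exponent in $[-n\ell_1-1,\,n\ell_2+1]$, and the resulting Laurent Hankel matrix as a compression of $M(n+k)$, then a strong truncated Hamburger theorem and push-forward to $\cZ_p$. The point you leave open, excluding an atom at $t=0$, is not settled by shifting and applying the Hamburger result of \cite{CF91}, which by itself may produce such an atom. The paper instead invokes \cite[Theorem~3.1]{Zal22j} and uses the positive semidefinite one-step extensions on \emph{both} sides to obtain, via \cite[Proposition~2.1(4),(5)]{Zal22j}, the rank equalities required in the singular case. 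In shifted terms, the left-hand extension forces the rank to survive deletion of the first row and column, which corresponds to multiplying the measure by $t^2$; this rules out an atom at the origin.
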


\begin{proof}
Set \(k:=\ell_1+\ell_2-1\) and put \(N:=n+k\). Let
\(\beta\equiv\beta^{(2n)}\) be a truncated sequence having the
\(k\)-extension property relative to \(\{1,p,-p\}\). Thus, \(M(n)\)
admits a positive extension \(M(N)\) satisfying
\[
M_p(N)\succeq0,\qquad M_{-p}(N)\succeq0.
\]
Since \(M_{-p}(N)=-M_p(N)\), these inequalities imply
\(M_p(N)=0\). Equivalently, the columns of \(M(N)\) satisfy the
recursively generated relation \(X^{\ell_1}Y^{\ell_2}=1\) whenever
the corresponding products are defined. Let
\(L_\beta:\mathcal P_{2N}\to\mathbb R\) denote the Riesz functional
associated with the chosen extension.

We first record a consequence that will be used repeatedly:
\begin{equation}
\label{annihilation-p}
L_\beta(ph)=0
\qquad\text{whenever}\qquad
\deg(ph)\leq 2N.
\end{equation}
Indeed, it suffices by linearity to consider a monomial \(h\).
Under the indicated degree bound, we may factor \(h=qr\) so that
\(\deg q\leq N-\deg p\) and \(\deg r\leq N\). Recursive generation
gives \((pq)(X,Y)=0\), and pairing this column relation with
\(r(X,Y)\) gives \(L_\beta(ph)=L_\beta(pqr)=0\).

The curve \(\cZ_p\) is parametrized by
\begin{equation}
\label{parametrization-of-curve}
x=t^{\ell_2},\qquad y=t^{-\ell_1},
\qquad t\in\mathbb R\setminus\{0\}.
\end{equation}
Recall that a Laurent polynomial is a finite linear combination of
integer powers of \(t\), that is, an element of
\(\mathbb R[t,t^{-1}]\). If \(\nu\) is a positive Borel measure on
\(\mathbb R\setminus\{0\}\), its Laurent moments are
\[
\gamma_u:=\int_{\mathbb R\setminus\{0\}}t^u\,d\nu(t),
\qquad u\in\mathbb Z,
\]
whenever these integrals are finite. Positive exponents are the usual
moments, whereas negative exponents record moments of \(1/t\). More
generally, if the moments are specified only for \(u\) in a set
\(E\subseteq\mathbb Z\), we call
\(\{\gamma_u:u\in E\}\) a partial Laurent moment sequence.

Under the parametrization \eqref{parametrization-of-curve}, the
monomial \(x^iy^j\) becomes the Laurent monomial
\(t^{i\ell_2-j\ell_1}\). Consequently, the bivariate moment
\(\beta_{i,j}\) corresponds to the Laurent moment
\(\gamma_{i\ell_2-j\ell_1}\). The Laurent indices determined by
\(\beta^{(2n)}\) form the set
\[
D_{2n}
:=
\{i\ell_2-j\ell_1:
  i,j\in\mathbb N_0,\ i+j\leq2n\}
\subseteq\mathbb Z.
\]

For \(u\in D_{2n}\), choose \(i,j\in\mathbb N_0\) such that
\(u=i\ell_2-j\ell_1\) and \(i+j\leq2n\), and define
\(\gamma_u:=\beta_{i,j}\). This definition is independent of the
chosen representation. Indeed, suppose that
\(i\ell_2-j\ell_1=i'\ell_2-j'\ell_1\). Since
\(\gcd(\ell_1,\ell_2)=1\), there exists \(r\in\mathbb Z\) such that
\[
i-i'=r\ell_1,\qquad j-j'=r\ell_2.
\]
It follows that \(x^iy^j-x^{i'}y^{j'}\in(p)\). Moreover, both
monomials have degree at most \(2n\), so their difference can be
written as \(ph\) with \(\deg(ph)\leq2n\leq2N\). Hence
\eqref{annihilation-p} gives
\[
\beta_{i,j}-\beta_{i',j'}
=
L_\beta(x^iy^j-x^{i'}y^{j'})
=
0.
\]
Thus, \(\gamma=\{\gamma_u:u\in D_{2n}\}\) is a well-defined partial
Laurent sequence.

A representing measure for \(\gamma\) is a positive Borel measure
\(\nu\) on \(\mathbb R\setminus\{0\}\) satisfying
\[
\gamma_u
=
\int_{\mathbb R\setminus\{0\}}t^u\,d\nu(t),
\qquad u\in D_{2n}.
\]
Such a measure pushes forward under
\(\varphi(t):=(t^{\ell_2},t^{-\ell_1})\) to a
\(\cZ_p\)-representing measure for \(\beta^{(2n)}\).

The strong truncated Hamburger moment theorem that we shall use is
stated for Laurent moments indexed by a full interval of integers,
rather than by the possibly nonconsecutive set \(D_{2n}\). We
therefore construct an extension of \(\gamma\) to the interval
\[
I:=[-2n\ell_1-2,\,2n\ell_2+2]\cap\mathbb Z.
\]

We first claim that every integer
\(u\in[-n\ell_1-1,\,n\ell_2+1]\) can be written as
\[
u=i\ell_2-j\ell_1,
\qquad i,j\in\mathbb N_0,
\qquad i+j\leq N.
\]

Suppose first that \(1\leq u\leq n\ell_2\), and set
\(m:=\lceil u/\ell_2\rceil\). Then \(1\leq m\leq n\) and
\((m-1)\ell_2<u\leq m\ell_2\). Choose
\(r\in\{0,\ldots,\ell_1-1\}\) such that
\((m+r)\ell_2\equiv u\pmod{\ell_1}\), and set
\[
s:=\frac{(m+r)\ell_2-u}{\ell_1}.
\]
Then \(s\in\mathbb N_0\), and
\[
0\leq(m+r)\ell_2-u
<
(r+1)\ell_2
\leq\ell_1\ell_2.
\]
Consequently, \(s\leq\ell_2-1\), and
\(u=(m+r)\ell_2-s\ell_1\), where
\[
(m+r)+s
\leq
n+(\ell_1-1)+(\ell_2-1)
=
n+k-1
<
N+1.
\]

For the remaining positive endpoint \(u=n\ell_2+1\), choose
\(r\in\{1,\ldots,\ell_1-1\}\) such that
\(r\ell_2\equiv1\pmod{\ell_1}\), and set
\(s:=(r\ell_2-1)/\ell_1\). Then
\(0\leq s\leq\ell_2-1\), and
\[
n\ell_2+1
=
(n+r)\ell_2-s\ell_1,
\qquad
(n+r)+s\leq n+k=N.
\]

The negative indices are treated symmetrically. Suppose that
\(-n\ell_1\leq u\leq-1\), and put \(w:=-u\) and
\(m:=\lceil w/\ell_1\rceil\). Choose
\(r\in\{0,\ldots,\ell_2-1\}\) such that
\((m+r)\ell_1\equiv w\pmod{\ell_2}\), and set
\[
s:=\frac{(m+r)\ell_1-w}{\ell_2}.
\]
Then \(0\leq s\leq\ell_1-1\), and
\(u=s\ell_2-(m+r)\ell_1\), where
\[
s+(m+r)
\leq
n+(\ell_1-1)+(\ell_2-1)
=
n+k-1.
\]

Finally, for \(u=-n\ell_1-1\), choose
\(r\in\{1,\ldots,\ell_2-1\}\) such that
\(r\ell_1\equiv1\pmod{\ell_2}\), and set
\(s:=(r\ell_1-1)/\ell_2\). Then
\(0\leq s\leq\ell_1-1\), and
\[
-n\ell_1-1
=
s\ell_2-(n+r)\ell_1,
\qquad
s+(n+r)\leq n+k=N.
\]
The index \(u=0\) is represented by the monomial \(1\), proving the
claim.

For every
\[
u\in J:=[-n\ell_1-1,\,n\ell_2+1]\cap\mathbb Z,
\]
choose a monomial \(m_u:=X^{i_u}Y^{j_u}\) such that
\(\deg m_u\leq N\) and \(i_u\ell_2-j_u\ell_1=u\). Consider the
principal submatrix
\[
H
:=
\bigl(L_\beta(m_um_v)\bigr)_{u,v\in J}
\]
of \(M(N)\). Since \(M(N)\succeq0\), we have \(H\succeq0\).

We next show that the entries of \(H\) depend only on \(u+v\).
Suppose that \(u,v,u',v'\in J\) satisfy \(u+v=u'+v'\). Then
\(m_um_v\) and \(m_{u'}m_{v'}\) have the same Laurent index under
\eqref{parametrization-of-curve}. Their difference therefore belongs
to \((p)\). Since both products have degree at most \(2N\), we may
write \(m_um_v-m_{u'}m_{v'}=ph\) with \(\deg(ph)\leq2N\). By
\eqref{annihilation-p},
\[
L_\beta(m_um_v)
=
L_\beta(m_{u'}m_{v'}).
\]

Since
\[
J+J
=
[-2n\ell_1-2,\,2n\ell_2+2]\cap\mathbb Z
=
I,
\]
we may define, for \(w\in I\),
\[
\widetilde\gamma_w
:=
L_\beta(m_um_v),
\]
where \(u,v\in J\) are any indices satisfying \(u+v=w\). The
preceding argument shows that \(\widetilde\gamma_w\) is independent
of the chosen decomposition \(w=u+v\). Thus
\(\widetilde\gamma=\{\widetilde\gamma_w:w\in I\}\) is well defined,
and
\[
H
=
H_{\widetilde\gamma}
:=
\bigl(\widetilde\gamma_{u+v}\bigr)_{u,v\in J}
\succeq0.
\]

It remains to verify that \(\widetilde\gamma\) extends the original
partial sequence \(\gamma\). Let
\(q=i\ell_2-j\ell_1\in D_{2n}\), where \(i+j\leq2n\). Since
\[
D_{2n}
\subseteq
[-2n\ell_1,\,2n\ell_2]\cap\mathbb Z
\subseteq I,
\]
choose \(u,v\in J\) such that \(u+v=q\). The monomials \(X^iY^j\)
and \(m_um_v\) have the same Laurent index, so their difference
belongs to \((p)\). Moreover,
\[
\deg(X^iY^j)\leq2n\leq2N,
\qquad
\deg(m_um_v)\leq2N.
\]
Hence \eqref{annihilation-p} gives
\[
\widetilde\gamma_q
=
L_\beta(m_um_v)
=
L_\beta(X^iY^j)
=
\beta_{i,j}
=
\gamma_q.
\]
Therefore, \(\widetilde\gamma\) is a Laurent extension of \(\gamma\).

Let
\[
\widehat\gamma
:=
\{\widetilde\gamma_u:
  -2n\ell_1\leq u\leq2n\ell_2\},
\]
and let
\[
H_{\widehat\gamma}
:=
\bigl(\widetilde\gamma_{u+v}\bigr)_{
-n\ell_1\leq u,v\leq n\ell_2}
\]
be its Laurent Hankel matrix. Since \(H_{\widehat\gamma}\) is a
principal submatrix of \(H_{\widetilde\gamma}\), we have
\(H_{\widehat\gamma}\succeq0\).
Suppose first that
\(\widetilde\gamma_{-2n\ell_1}=0\).
This is the diagonal entry of \(H_{\widetilde\gamma}\) indexed by
\(-n\ell_1\). Since \(H_{\widetilde\gamma}\succeq0\), the corresponding
row and column vanish. It is easy to show inductively that 
$H_{\widehat \gamma}=0$, and hence \(\widehat\gamma\) is the zero
sequence. Since \(\widehat\gamma\) extends the original partial sequence
\(\gamma\), it follows that \(\beta^{(2n)}\) is the zero sequence, which
is represented by the zero measure. We may therefore assume from now on
that
$
\widetilde\gamma_{-2n\ell_1}>0.
$

If \(H_{\widehat\gamma}\succ0\), then
\cite[Theorem~3.1(4a)]{Zal22j} implies that
\(\widehat\gamma\) admits a representing measure supported on
\(\mathbb R\setminus\{0\}\). Suppose now that
\(H_{\widehat\gamma}\) is singular. The principal submatrices of
\(H_{\widetilde\gamma}\) indexed by
\[
[-n\ell_1,\,n\ell_2+1]
\qquad\text{and}\qquad
[-n\ell_1-1,\,n\ell_2]
\]
are positive semidefinite one-step extensions of
\(H_{\widehat\gamma}\) on the positive and negative sides,
respectively. By \cite[Proposition~2.1(4),(5)]{Zal22j},
\[
\operatorname{rank}H_{\widehat\gamma}
=
\operatorname{rank}
\bigl(\widetilde\gamma_{u+v}\bigr)_{
-n\ell_1\leq u,v\leq n\ell_2-1}
=
\operatorname{rank}
\bigl(\widetilde\gamma_{u+v}\bigr)_{
-n\ell_1+1\leq u,v\leq n\ell_2}.
\]
Hence condition \cite[Theorem~3.1(4b)]{Zal22j} is satisfied.
Therefore, in either case, \cite[Theorem~3.1]{Zal22j} yields a
representing measure \(\nu\) for \(\widehat\gamma\), supported on
\(\mathbb R\setminus\{0\}\).

Push \(\nu\) forward under
\(t\mapsto(t^{\ell_2},t^{-\ell_1})\). The resulting measure \(\mu\)
is supported on \(\cZ_p\), and for \(i,j\in\mathbb N_0\) with
\(i+j\leq2n\),
\[
\int_{\cZ_p}x^iy^j\,d\mu
=
\int_{\mathbb R\setminus\{0\}}
t^{i\ell_2-j\ell_1}\,d\nu(t)
=
\widehat\gamma_{i\ell_2-j\ell_1}
=
\gamma_{i\ell_2-j\ell_1}
=
\beta_{i,j}.
\]
Thus, \(\mu\) is a \(\cZ_p\)-representing measure for
\(\beta^{(2n)}\).

We have proved that the \(k\)-extension property relative to
\(\{1,p,-p\}\), where \(k=\ell_1+\ell_2-1\), implies the existence
of a \(\cZ_p\)-representing measure. The converse follows from the
standard necessity of the positive extension and localizing
conditions. Therefore, \(\{1,p,-p\}\) satisfies property
$
(S_{n,\ell_1+\ell_2-1}).
$
\end{proof}

\begin{corollary}
\label{cor-after-4-1}
Let \(\ell_1,\ell_2\in\NN\setminus\{1\}\) satisfy
\(\gcd(\ell_1,\ell_2)=1\), and let
\[
        p(x,y):=x^{\ell_1}y^{\ell_2}-1.
\]
Suppose that \(n\ge\ell_1+\ell_2\). Then every polynomial
\(f\in\mathcal P_{2n}\) that is strictly positive on \(\cZ_p\)
admits a representation
\[
        f
        =
        \sum_{j=1}^r g_j^2+ph,
\]
where \(g_1,\ldots,g_r,h\in\mathbb R[x,y]\) and
$
        g_j^2,\ ph
        \in
        \mathcal P_{
        2\left(
        n+\ell_1+\ell_2-1
        \right)},$
        $j=1,\ldots,r.
$
\end{corollary}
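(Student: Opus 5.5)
This corollary follows by combining Theorem~\ref{theo:case-2} with part (i) of Theorem~\ref{thm:Psatz-bounds}. Since \(n\ge\ell_1+\ell_2\), Theorem~\ref{theo:case-2} applies and shows that \(Q:=\{1,p,-p\}\) satisfies property \((S_{n,k})\) with \(k:=\ell_1+\ell_2-1\). Here \(K_Q=\{(x,y):p(x,y)\ge0,\ -p(x,y)\ge0\}=\cZ_p\). Theorem~\ref{thm:Psatz-bounds}(i) then gives that every \(f\in\mathcal P_{2n}\) strictly positive on \(\cZ_p\) lies in the truncated quadratic module \(\Sigma_{Q,n+k}\).

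The remaining step is to unpack membership in \(\Sigma_{Q,n+k}\). By \eqref{def:quadratic-module}, there is a representation
\[
f=\sum_{j}g_j^2+\sum_{i}p\,a_i^2-\sum_{m}p\,b_m^2 ,
\]
with every term of degree at most \(2(n+k)\). Set \(h:=\sum_i a_i^2-\sum_m b_m^2\). Then \(f=\sum_j g_j^2+ph\) with \(\deg g_j^2\le 2(n+k)\). Each summand \(p a_i^2\) and \(p b_m^2\) lies in \(\mathcal P_{2(n+k)}\), so \(ph\), being a linear combination of them, also lies in \(\mathcal P_{2(n+k)}\). Substituting \(n+k=n+\ell_1+\ell_2-1\) gives exactly the degree bounds in the statement.

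No step here is a genuine obstacle. The only points needing care are that \(K_Q\) coincides with \(\cZ_p\), and that the degree bound on \(ph\) comes from the termwise bounds rather than from \(\deg h\) alone.
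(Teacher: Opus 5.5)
Your proposal is correct and follows essentially the same route as the paper: invoke Theorem~\ref{theo:case-2} to obtain property $(S_{n,\ell_1+\ell_2-1})$, apply Theorem~\ref{thm:Psatz-bounds}(i) to place $f$ in $\Sigma_{\{1,p,-p\},n+k}$, and combine the $p$- and $(-p)$-weighted squares into a single term $ph$. Your observation that the bound on $ph$ follows termwise from the bounds on the summands is a slightly more explicit version of the paper's unpacking.
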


\begin{proof}
Set
$
k
        :=\ell_1+\ell_2-1.
$
By Theorem~\ref{theo:case-2}, the generator family
\(\{1,p,-p\}\) satisfies property \((S_{n,k})\). Hence,
Theorem~\ref{thm:Psatz-bounds}\textup{(i)} implies that
\(f\in\Sigma_{\{1,p,-p\},n+k}\). Thus
$
        f
        =
        \sigma_0+p\sigma_1-p\sigma_2
$
for some sums of squares \(\sigma_0,\sigma_1,\sigma_2\) satisfying
the corresponding degree bounds. Writing
\(\sigma_0=\sum_{j=1}^r g_j^2\) and
\(h:=\sigma_1-\sigma_2\), we obtain
$
        f=\sum_{j=1}^r g_j^2+ph,
$
with \(g_j^2,ph\in\mathcal P_{2(n+k)}\), as required.
\end{proof}

We next extend Theorem~\ref{theo:case-2} to reducible curves obtained
by adjoining one of the coordinate axes to
\(\cZ_{x^{\ell_1}y^{\ell_2}-1}\).

\begin{theorem}
\label{theo:case-3}
Let \(\ell_1,\ell_2\in\NN\setminus\{1\}\) satisfy
\(\gcd(\ell_1,\ell_2)=1\), and let
\[
        p(x,y)
        :=
        q(x,y)\bigl(x^{\ell_1}y^{\ell_2}-1\bigr),
        \qquad
        q(x,y)\in\{x,y\}.
\]
Then, for every \(n\ge\ell_1+\ell_2+1\), the generator family
\(\{1,p,-p\}\) satisfies property
$
        \left(
        S_{n,\; 2(\ell_1+\ell_2)}
        \right).
$
\end{theorem}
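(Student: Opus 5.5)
By interchanging \(x\) and \(y\), I will reduce to \(q=y\), so that \(p=y(x^{\ell_1}y^{\ell_2}-1)\) and \(\cZ_p=\cZ_y\cup\cZ_{x^{\ell_1}y^{\ell_2}-1}\). Set \(k:=2(\ell_1+\ell_2)\) and \(N:=n+k\), and fix a positive extension \(M(N)\) with \(M_p(N)=0\). As in the proof of Theorem~\ref{theo:case-2}, this gives \(L_\beta(ph)=0\) whenever \(\deg(ph)\le 2N\). The plan is to split the Riesz functional into a part living on the axis \(\cZ_y\) and a part living on the curve \(\cZ_{x^{\ell_1}y^{\ell_2}-1}\). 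I will then solve a univariate Hamburger problem for the first part and a strong Hamburger (Laurent) problem for the second, using \cite[Theorem~3.1]{Zal22j} as in Theorem~\ref{theo:case-2}. Adding the two measures will give a \(\cZ_p\)-representing measure.

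\textbf{Decomposition.} The key algebraic identity is \(1=x^{\ell_1}y^{\ell_2}-(x^{\ell_1}y^{\ell_2}-1)\). For every monomial \(x^iy^j\) with \(j\ge1\), the relation \(p=0\) gives \(x^iy^j\equiv x^{i+\ell_1}y^{j+\ell_2}\) modulo \((p)\). The moments with \(j\ge1\) are therefore determined by the curve part. I define the curve functional \(L_C(g):=L_\beta\bigl(x^{\ell_1}y^{\ell_2}g\bigr)\) for \(\deg g\le 2N-\ell_1-\ell_2\), and the axis functional \(L_A(g):=L_\beta\bigl((1-x^{\ell_1}y^{\ell_2})g\bigr)\). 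Then \(L_\beta=L_A+L_C\) on \(\mathcal P_{2n}\) provided \(2n+\ell_1+\ell_2\le 2N\). The relation \(p=0\) shows that \(L_A(y h)=0\), so \(L_A\) depends only on the univariate moments \(L_A(x^i)\). It also shows that \(L_C\) annihilates \(x^{\ell_1}y^{\ell_2}-1\) up to the relevant degree.

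\textbf{Positivity.} This is the main obstacle, and it is what dictates the doubled extension degree \(2(\ell_1+\ell_2)\). Positivity of \(L_C\) on squares of degree \(\le 2(n+\ell_1+\ell_2-1)\) should follow by a Cauchy–Schwarz/localizing argument. The trick I plan to use is that on the curve \(x^{\ell_1}y^{\ell_2}\) is a "square up to units": write \(\ell_1=2a_1+\epsilon_1\) and \(\ell_2=2a_2+\epsilon_2\). Then \(L_C(g^2)=L_\beta(x^{\ell_1}y^{\ell_2}g^2)\), and using \(x^{\ell_1}y^{\ell_2}\cdot x^{\ell_1}y^{\ell_2}\equiv x^{\ell_1}y^{\ell_2}\) modulo \((p)\) (valid since the factor \(y\) divides), this equals \(L_\beta\bigl((x^{\ell_1}y^{\ell_2}g)^2\bigr)\ge0\). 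Here \(\deg(x^{\ell_1}y^{\ell_2}g)\le N\) is needed, which holds for \(\deg g\le n+\ell_1+\ell_2\). The same identity gives \(L_A(g^2)=L_\beta\bigl((1-x^{\ell_1}y^{\ell_2})^2g^2\bigr)\ge0\) for \(\deg g\le n+\ell_1+\ell_2\), because \((1-m)^2\equiv 1-m\) modulo \((p)\) where \(m=x^{\ell_1}y^{\ell_2}\). In effect \(m\) and \(1-m\) act as complementary idempotents modulo \(p\). Both statements need only \(N\ge n+2(\ell_1+\ell_2)\) to control all degrees, which is the claimed \(k\).

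\textbf{Measures.} For \(L_A\), I restrict to \(x\)-moments up to degree \(2(n+\ell_1+\ell_2)\). This gives a positive Hankel matrix with at least one extra step beyond degree \(2n\). By \cite[Th.~3.9]{CF91}, that is, property \((S_{n,1})\) for \(Q=\{1\}\) on \(\mathbb R\), I obtain a measure on \(\mathbb R\), which I embed in \(\cZ_y\). For \(L_C\), which is a positive functional of order \(n+\ell_1+\ell_2-1\) vanishing on \((x^{\ell_1}y^{\ell_2}-1)\) to the required degree, I will rerun the Laurent-index argument of Theorem~\ref{theo:case-2}, whose extension degree \(\ell_1+\ell_2-1\) is available, to get a measure on \(\cZ_{x^{\ell_1}y^{\ell_2}-1}\) representing \(L_C\) on \(\mathcal P_{2n}\). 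The sum of the two measures represents \(\beta^{(2n)}\), and necessity of the extension conditions is standard.
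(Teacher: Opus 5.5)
Your proof is correct. The splitting is the same as the paper's, but you establish its key properties by a different and more elementary mechanism.

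\textbf{The paper's route.} It orders monomials as pure powers of $X$ versus monomials containing $Y$. It forms the generalized Schur complement and computes $A_1^{(N)}=B^{(N)}(C^{(N)})^\dagger(B^{(N)})^{\mathsf T}$ via the row identities $B^{(N)}_{X^a,\bullet}=C^{(N)}_{z_a,\bullet}$, where $z_a=X^{a+\ell_1}Y^{\ell_2}$, together with $C^{(N)}(C^{(N)})^\dagger C^{(N)}=C^{(N)}$. It then checks by hand that the completed block is Hankel, that the second summand is a genuine moment matrix, and that its columns satisfy $X^{\ell_1}Y^{\ell_2}=1$, appealing to the kernel property.

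\textbf{How your route compares.} Unwinding those computations, e.g.\ $(A_1^{(N)})_{X^a,X^b}=L_\beta(x^{a+b}m^2)=L_\beta(x^{a+b}m)$ with $m=x^{\ell_1}y^{\ell_2}$, the two summands truncated to order $n+\ell_1+\ell_2$ are exactly the moment matrices of your $L_A$ and $L_C$. Your observation that $m$ and $1-m$ are complementary idempotents modulo $(p)$ gives $L_C(g^2)=L_\beta((mg)^2)$ and $L_A(g^2)=L_\beta(((1-m)g)^2)$ directly. Positivity, the moment structure, and the vanishing of $L_C$ on multiples of $m-1$ then come for free, with no pseudo-inverses. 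You also need not rerun anything for $L_C$: since $n\ge\ell_1+\ell_2$ and you have positivity through order $n+\ell_1+\ell_2-1$, Theorem~\ref{theo:case-2} applies as a black box.

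\textbf{What each buys.} The price of your approach is generality. The idempotent exists only because $\cZ_y$ and $\cZ_{x^{\ell_1}y^{\ell_2}-1}$ are disjoint, i.e.\ $(y)+(x^{\ell_1}y^{\ell_2}-1)=(1)$. In Theorem~\ref{theo:case-5} the components $\cZ_y$ and $\cZ_{x-y^\ell}$ meet at the origin, so $\mathbb R[x,y]/(p)$ has no nontrivial idempotent and your trick is unavailable. The Schur-complement route still works there, with the constant row handled separately. In Theorem~\ref{theo:case-4} your $m$ would still split off the curve, but the two crossing axes need the paper's extra splitting of $\rho$.

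\textbf{A small point shared with the paper.} When $\deg p=\ell_1+\ell_2+1$ is odd, $M_p(N)=0$ only gives $L_\beta(ph)=0$ for $\deg(ph)\le 2N-1$. So the blanket claim for $\deg(ph)\le 2N$ is not automatic. Your argument does not need it. Assert positivity of $L_C$ only through order $n+\ell_1+\ell_2-1$ and of $L_A$ only through order $n+1$, which is all you use. Then every instance of $L_\beta(ph)=0$ in your own argument has $\deg(ph)\le 2N-2$. Finally, the aside about writing $\ell_i=2a_i+\epsilon_i$ is never used and can be dropped.
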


\begin{proof}
By symmetry, it suffices to consider
\(p(x,y)=y(x^{\ell_1}y^{\ell_2}-1)\). Set
\(s:=\ell_1+\ell_2\), \(k:=2s\), \(N:=n+k=n+2s\), and
\(g(x,y):=x^{\ell_1}y^{\ell_2}-1\).

Let \(\beta\equiv\beta^{(2n)}\) have the \(k\)-extension property
relative to \(\{1,p,-p\}\). Thus, \(M(n)\) admits a positive
extension \(M(N)\) satisfying \(M_p(N)\succeq0\) and
\(M_{-p}(N)\succeq0\). Since \(M_{-p}(N)=-M_p(N)\), it follows that
\(M_p(N)=0\). By polarization and recursive generation, the columns
of \(M(N)\) satisfy
\(X^iY^{j+1}=X^{i+\ell_1}Y^{j+\ell_2+1}\) whenever the monomials
involved have degree at most \(N\).

We shall use the following consequence of this column relation:
\(L_\beta(pr)=0\) whenever \(\deg(pr)\leq2N\). Indeed, it is enough
to consider a monomial \(r\). Under the indicated degree bound, we
may factor \(r=r_1r_2\) so that \(\deg(pr_1)\leq N\) and
\(\deg r_2\leq N\). The recursively generated relation gives
\((pr_1)(X)=0\), and pairing this column relation with \(r_2(X)\)
yields \(L_\beta(pr)=L_\beta(pr_1r_2)=0\). Linearity gives the
assertion for arbitrary polynomials \(r\) in the same degree range.

The zero set of \(p\) is
\(\mathcal Z_p=\mathcal Z_y\cup\mathcal Z_g\). We shall decompose
the degree-\(2n\) truncation of \(M(N)\) into positive moment
matrices corresponding to these two components.

Order the monomials of degree at most \(N\) as
\(\vec X^{(0,N)},\vec{\mathcal T}_N\), where
\(\vec X^{(0,N)}=(1,X,\ldots,X^N)\) and
\(\vec{\mathcal T}_N\) consists of the monomials of degree at most
\(N\) containing a positive power of \(Y\). With respect to this
ordering, write
\[
M(N)=
\begin{pmatrix}
A^{(N)} & B^{(N)}\\
(B^{(N)})^{\mathsf T} & C^{(N)}
\end{pmatrix}.
\]
Thus, the rows and columns of \(A^{(N)}\) are indexed by the
monomials \(X^a\), \(0\leq a\leq N\), where \(X^0=1\); the rows of
\(B^{(N)}\) have the same indexing, while its columns and the rows
and columns of \(C^{(N)}\) are indexed by the monomials in
\(\mathcal T_N\).

Since \(M(N)\succeq0\), the generalized Schur-complement criterion
gives
\(\operatorname{Ran}((B^{(N)})^{\mathsf T})
 \subseteq\operatorname{Ran}C^{(N)}\) and
\(A^{(N)}-B^{(N)}(C^{(N)})^\dagger(B^{(N)})^{\mathsf T}\succeq0\).
Define
\(A_1^{(N)}:=B^{(N)}(C^{(N)})^\dagger(B^{(N)})^{\mathsf T}\).
We use monomial subscripts for its entries; thus,
\((A_1^{(N)})_{X^a,X^b}\) denotes the entry in the row indexed by
\(X^a\) and the column indexed by \(X^b\). In particular,
\((A_1^{(N)})_{X^a,1}\) means
\((A_1^{(N)})_{X^a,X^0}\). We have
\begin{equation}
\label{eq:case-3-schur-decomposition}
M(N)=
\begin{pmatrix}
A^{(N)}-A_1^{(N)} & 0\\
0 & 0
\end{pmatrix}
+
\begin{pmatrix}
A_1^{(N)} & B^{(N)}\\
(B^{(N)})^{\mathsf T} & C^{(N)}
\end{pmatrix},
\end{equation}
and both summands are positive semidefinite. We shall show that,
after suitable truncation, the first summand corresponds to
\(\mathcal Z_y\), while the second corresponds to \(\mathcal Z_g\).

Set \(R:=N-s=n+s\). For each \(a\in\{0,\ldots,R\}\), define
\(z_a:=X^{a+\ell_1}Y^{\ell_2}\). Since
\(\deg z_a=a+s\leq R+s=N\), the monomial \(z_a\) belongs to
\(\mathcal T_N\).

Let \(r\in\mathcal T_N\). Since \(r\) contains a positive power of
\(Y\), the quotient \(r/Y\) is a monomial, and
\(z_ar-X^ar=X^ar(X^{\ell_1}Y^{\ell_2}-1)=pX^a(r/Y)\). Moreover,
\(\deg(z_ar)\leq R+s+N=2N\). It follows that
\(L_\beta(X^ar)=L_\beta(z_ar)\). In terms of the blocks
\(B^{(N)}\) and \(C^{(N)}\), this means
\begin{equation}
\label{eq:case-3-block-row-identity}
B^{(N)}_{X^a,\bullet}=C^{(N)}_{z_a,\bullet},
\qquad 0\leq a\leq R,
\end{equation}
where the right-hand side denotes the row of \(C^{(N)}\) indexed by
\(z_a\).

Using \eqref{eq:case-3-block-row-identity} and
\(C^{(N)}(C^{(N)})^\dagger C^{(N)}=C^{(N)}\), we obtain, for
\(0\leq a,b\leq R\),
\begin{equation}
\label{eq:case-3-schur-block-entries}
\begin{aligned}
(A_1^{(N)})_{X^a,X^b}
&=B^{(N)}_{X^a,\bullet}(C^{(N)})^\dagger
  (B^{(N)}_{X^b,\bullet})^{\mathsf T}\\
&=e_{z_a}^{\mathsf T}
  C^{(N)}(C^{(N)})^\dagger C^{(N)}e_{z_b}\\
&=C^{(N)}_{z_a,z_b}.
\end{aligned}
\end{equation}
In particular,
\((A_1^{(N)})_{X^a,X^b}
 =\beta_{a+b+2\ell_1,\,2\ell_2}\), so these entries depend only on
\(a+b\). Thus the principal submatrix of \(A_1^{(N)}\) indexed by
\(1,X,\ldots,X^R\) is Hankel.

Let
\[
M^{[g]}(R):=
\begin{pmatrix}
A_1^{(R)} & B^{(R)}\\
(B^{(R)})^{\mathsf T} & C^{(R)}
\end{pmatrix}
\]
be the principal submatrix of the second summand in
\eqref{eq:case-3-schur-decomposition} indexed by all monomials of
degree at most \(R\), where \(A_1^{(R)},B^{(R)}\), and \(C^{(R)}\)
denote the corresponding restrictions of the blocks in
\eqref{eq:case-3-schur-decomposition}. Since the second summand in
\eqref{eq:case-3-schur-decomposition} is positive semidefinite,
\(M^{[g]}(R)\succeq0\). The entries of \(B^{(R)}\) and \(C^{(R)}\) are
entries of the original moment matrix \(M(N)\), while
\eqref{eq:case-3-schur-block-entries} shows that \(A_1^{(R)}\) is
Hankel. Consequently, \(M^{[g]}(R)\) is a well-defined bivariate moment
matrix.

We next show that its columns satisfy
\(X^{\ell_1}Y^{\ell_2}=1\). Write
\(z_0=X^{\ell_1}Y^{\ell_2}\). If the row is indexed by a pure
monomial \(X^a\), where \(0\leq a\leq R\), then
\eqref{eq:case-3-block-row-identity} and
\eqref{eq:case-3-schur-block-entries} give
\[
(M^{[g]}(R))_{X^a,1}
=(A_1^{(N)})_{X^a,1}
=C^{(N)}_{z_a,z_0}
=B^{(N)}_{X^a,z_0}
=(M^{[g]}(R))_{X^a,z_0}.
\]
If the row is indexed by a monomial \(r\in\mathcal T_R\), then
\eqref{eq:case-3-block-row-identity}, with \(a=0\), gives
\((M^{[g]}(R))_{r,1}=B^{(N)}_{1,r}=C^{(N)}_{z_0,r}
=(M^{[g]}(R))_{r,z_0}\). Thus the columns indexed by \(1\) and
\(X^{\ell_1}Y^{\ell_2}\) coincide. Since \(M^{[g]}(R)\) is positive
semidefinite, the standard kernel property for moment matrices shows
that this relation is recursively generated.

The degree-\(2n\) truncation of \(M^{[g]}(R)\) therefore has a positive
extension to order \(n+s-1\leq R\) satisfying
\(X^{\ell_1}Y^{\ell_2}=1\). Equivalently, the localizing matrices
associated with \(g\) and \(-g\) vanish. Hence this truncation has
the \((s-1)\)-extension property relative to \(\{1,g,-g\}\). By
Theorem~\ref{theo:case-2}, it admits a representing measure
\(\mu_g\) supported on
\(\mathcal Z_g=\mathcal Z_{x^{\ell_1}y^{\ell_2}-1}\).

It remains to treat the first summand in
\eqref{eq:case-3-schur-decomposition}. Set
\(D^{(N)}:=A^{(N)}-A_1^{(N)}\). Then \(D^{(N)}\succeq0\).
Moreover, \(A^{(N)}\) is Hankel, and
\eqref{eq:case-3-schur-block-entries} shows that the entries of
\(A_1^{(N)}\) depend only on the sum of their exponents whenever the
corresponding monomials have degree at most \(R\). It follows that
\[
\bigl(D^{(N)}_{X^a,X^b}\bigr)_{0\leq a,b\leq R}
\]
is a positive semidefinite univariate Hankel moment matrix.

Since \(n+1\leq R\), the principal submatrix
\[
D^{(n+1)}
:=
\bigl(D^{(N)}_{X^a,X^b}\bigr)_{0\leq a,b\leq n+1}
\]
is a positive semidefinite univariate Hankel moment matrix extending
\(D^{(n)}
 :=(D^{(N)}_{X^a,X^b})_{0\leq a,b\leq n}\). Since the generator
family \(\{1\}\) satisfies property \((S_{n,1})\), the degree-\(2n\)
truncation corresponding to \(D^{(n)}\) admits a representing
measure \(\nu\) on \(\mathbb R\). Pushing \(\nu\) forward under
\(t\mapsto(t,0)\) gives a measure \(\mu_y\) supported on
\(\mathcal Z_y\).

Restricting \eqref{eq:case-3-schur-decomposition} to moments of
degree at most \(2n\) gives
\(\beta^{(2n)}=\beta_y^{(2n)}+\beta_g^{(2n)}\), where
\(\beta_y^{(2n)}\) and \(\beta_g^{(2n)}\) are represented by
\(\mu_y\) and \(\mu_g\), respectively. Therefore
\(\mu:=\mu_y+\mu_g\) represents \(\beta^{(2n)}\), and
\(\operatorname{supp}\mu\subseteq
\mathcal Z_y\cup\mathcal Z_g=\mathcal Z_p\).

We have proved that the \(2(\ell_1+\ell_2)\)-extension property
relative to \(\{1,p,-p\}\) implies the existence of a
\(\mathcal Z_p\)-representing measure. The converse follows from the
standard necessity of positive extensions and the corresponding
localizing conditions. Hence \(\{1,p,-p\}\) satisfies property
\(\left(S_{n,\,2(\ell_1+\ell_2)}\right)\).
\end{proof}

We finally consider the case in which both coordinate axes are
adjoined to the curve
\(\cZ_{x^{\ell_1}y^{\ell_2}-1}\).

\begin{theorem}
\label{theo:case-4}
Let \(\ell_1,\ell_2\in\NN\setminus\{1\}\) satisfy
\(\gcd(\ell_1,\ell_2)=1\), and let
\[
        p(x,y)
        :=
        xy\bigl(x^{\ell_1}y^{\ell_2}-1\bigr).
\]
Then, for every \(n\ge\ell_1+\ell_2+2\), the generator family
\(\{1,p,-p\}\) satisfies property
$
        \left(
        S_{n,\,2(\ell_1+\ell_2)}
        \right).
$
\end{theorem}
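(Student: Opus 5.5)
We adapt the Schur-complement decomposition from the proof of Theorem~\ref{theo:case-3}, now splitting off both coordinate axes. Set \(s:=\ell_1+\ell_2\), \(k:=2s\), \(N:=n+k\) and \(g:=x^{\ell_1}y^{\ell_2}-1\). Let \(\beta^{(2n)}\) have the \(k\)-extension property. Then, exactly as before, \(M_p(N)=0\), and recursive generation gives \(L_\beta(pr)=0\) whenever \(\deg(pr)\le 2N\). The zero set is \(\cZ_p=\cZ_x\cup\cZ_y\cup\cZ_g\).

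We order the monomials of degree at most \(N\) as \(\vec{\mathcal P}_N,\vec{\mathcal T}_N\). Here \(\mathcal P_N\) consists of the pure monomials \(1,X,\dots,X^N,Y,\dots,Y^N\), and \(\mathcal T_N\) consists of the mixed monomials \(X^aY^b\) with \(a,b\ge1\). Write
\[
M(N)=\begin{pmatrix}A&B\\B^{\mathsf T}&C\end{pmatrix}
\]
accordingly, and set \(A_1:=BC^\dagger B^{\mathsf T}\), so that \(M(N)\) is the sum of \(\operatorname{diag}(A-A_1,0)\) and the block matrix with \(A_1\) in place of \(A\).

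The key identity replaces \eqref{eq:case-3-block-row-identity}. For a pure row \(u\in\{X^a,Y^b\}\) of degree at most \(R:=N-s-1\), take \(z_u:=u\,X^{\ell_1+1}Y^{\ell_2+1}\), which is mixed. For every \(r\in\mathcal T_N\),
\[
z_ur-u\,XYr=u\,r\,XY\,g=p\,u\,r,
\]
so \(L_\beta(u\,XY\,r)=L_\beta(z_ur)\). This is not yet the desired identity, because \(u\,XY\,r\ne u\,r\). To fix this, we use instead
\[
z_u:=u\,X^{\ell_1}Y^{\ell_2}.
\]
Since \(r\) is divisible by \(XY\), we get \(z_ur-ur=u\,(r/(XY))\,p\), and hence \(B_{u,\bullet}=C_{z_u,\bullet}\). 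The degree condition is \(\deg(z_ur)\le 2N\), which holds for \(\deg u\le N-s\). For \(u=1\), however, \(z_1=X^{\ell_1}Y^{\ell_2}\) is mixed only because \(\ell_1,\ell_2\ge1\), so this case needs no separate treatment.

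As in \eqref{eq:case-3-schur-block-entries}, it follows that \((A_1)_{u,v}=C_{z_u,z_v}=L_\beta(uv\,x^{2\ell_1}y^{2\ell_2})\). This quantity depends only on the product \(uv\). Note that \(X^aY^b\) as a product of an \(X\)-row and a \(Y\)-column is consistent: it is the mixed moment \(L_\beta(x^{a+2\ell_1}y^{b+2\ell_2})\), which by the \(p\)-annihilation equals \(L_\beta(x^{a+\ell_1}y^{b+\ell_2})\), and so matches \(B\)-type entries. Consequently the second summand, truncated to degree \(R=n+s\), is a genuine positive moment matrix \(M^{[g]}(R)\). Its columns satisfy \(1=X^{\ell_1}Y^{\ell_2}\), as in the proof of Theorem~\ref{theo:case-3}. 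Since \(R\ge n+s-1\), Theorem~\ref{theo:case-2} gives a measure \(\mu_g\) on \(\cZ_g\).

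For the first summand, \(D:=A-A_1\succeq0\) is indexed by \(1,X^a,Y^b\). Its entries \(D_{X^a,Y^b}\) with \(a,b\ge1\) vanish, because \(A_{X^a,Y^b}=\beta_{a,b}\) is a mixed moment which, as just checked, equals \((A_1)_{X^a,Y^b}\). Therefore \(D\) is the matrix of a functional supported on the union of the axes: two univariate Hankel matrices glued along the entries in row and column \(1\). This is the main obstacle. The common mass \(D_{1,1}\) must be split as \(c_x+c_y\) so that both \((D_{X^a,X^b})\) with the corner replaced by \(c_x\), and the analogous \(Y\)-matrix with corner \(c_y\), are positive semidefinite and admit positive extensions.

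To handle this, we use that \(D\succeq0\) with vanishing cross block \(X^a\)–\(Y^b\). The Schur complement of the \(\{X^a\}_{a\ge1}\) and \(\{Y^b\}_{b\ge1}\) blocks is then nonnegative, and we split \(D_{1,1}\) as \(c_x\) equal to the \(X\)-part of \(w^{\mathsf T}G^\dagger w\) plus half of the Schur remainder, and \(c_y\) analogously. Each resulting univariate Hankel matrix has degree \(R\ge n+1\) and is positive semidefinite. Property \((S_{n,1})\) for \(\{1\}\) then yields measures on \(\cZ_y\) (the \(x\)-axis) and on \(\cZ_x\), which we push forward under \(t\mapsto(t,0)\) and \(t\mapsto(0,t)\). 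Summing the three measures represents \(\beta^{(2n)}\), and the converse implication is the standard necessity argument.
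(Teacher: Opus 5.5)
Your proposal is correct and takes essentially the same route as the paper. It uses the same Schur complement with respect to the mixed block, the same row identities via $z_u=uX^{\ell_1}Y^{\ell_2}$ (the paper's $z_a$, $w_b$), the same check that the completed $X$--$Y$ block reproduces $\beta_{a,b}$ so the residual has vanishing cross block, and the same splitting of the residual corner mass between the two axes.

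There is one small caveat, which the paper's proof shares. When $\deg p=\ell_1+\ell_2+2$ is odd, the localizing condition $M_p(N)=0$ only reaches degree $2N-1$. So $L_\beta(pr)=0$ is not guaranteed when $\deg(pr)=2N$, and the row identity can fail for $\deg u=n+s$. Restricting the pure rows to degree at most $n+s-1$, as your first choice $R=N-s-1$ did, avoids this. That still leaves enough room for Theorem~\ref{theo:case-2} and the univariate one-step extensions.
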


\begin{proof}
Set \(s:=\ell_1+\ell_2\), \(k:=2s\), \(N:=n+k=n+2s\), and
\(g(x,y):=x^{\ell_1}y^{\ell_2}-1\).

Let \(\beta\equiv\beta^{(2n)}\) have the \(k\)-extension property
relative to \(\{1,p,-p\}\). Thus, \(M(n)\) admits a positive
extension \(M(N)\) satisfying \(M_p(N)\succeq0\) and
\(M_{-p}(N)\succeq0\). Since \(M_{-p}(N)=-M_p(N)\), we have
\(M_p(N)=0\). By polarization and recursive generation, the columns
of \(M(N)\) satisfy
\(X^{i+1}Y^{j+1}=X^{i+\ell_1+1}Y^{j+\ell_2+1}\) whenever the
monomials involved have degree at most \(N\).

As in the proof of Theorem~\ref{theo:case-3}, the corresponding
factorization argument gives
\[
L_\beta(pr)=0
\qquad\text{whenever}\qquad
\deg(pr)\leq2N.
\tag{\(\ast\)}
\]
Indeed, one factors each monomial \(r=r_1r_2\) so that
\(\deg(pr_1)\leq N\) and \(\deg r_2\leq N\), and then pairs the
column relation \((pr_1)(X)=0\) with \(r_2(X)\).

The zero set of \(p\) is
\(\cZ_p=\cZ_x\cup\cZ_y\cup\cZ_g\). We shall decompose the
degree-\(2n\) truncation of \(M(N)\) into positive moment matrices
corresponding to these three components.

Order the monomials of degree at most \(N\) as
\(1,\vec X^{(1,N)},\vec Y^{(1,N)},\vec{\mathcal T}_N\), where
\(\vec X^{(1,N)}=(X,\ldots,X^N)\),
\(\vec Y^{(1,N)}=(Y,\ldots,Y^N)\), and
\(\vec{\mathcal T}_N\) consists of the mixed monomials
\(X^iY^j\) with \(i,j\geq1\) and \(i+j\leq N\). With respect to
this ordering, write
\[
M(N)=
\begin{pmatrix}
\beta_{0,0}
&
(a^{(N)})^{\mathsf T}
&
(b^{(N)})^{\mathsf T}
&
(c^{(N)})^{\mathsf T}
\\
a^{(N)}
&
A^{(N)}
&
B^{(N)}
&
C^{(N)}
\\
b^{(N)}
&
(B^{(N)})^{\mathsf T}
&
D^{(N)}
&
E^{(N)}
\\
c^{(N)}
&
(C^{(N)})^{\mathsf T}
&
(E^{(N)})^{\mathsf T}
&
F^{(N)}
\end{pmatrix}.
\]
Thus, \(A^{(N)}\), \(D^{(N)}\), and \(F^{(N)}\) are indexed by the
monomials \(X^a\), \(Y^b\), and the mixed monomials, respectively.
We use monomial subscripts throughout; for example,
\(A^{(N)}_{X^a,X^{a'}}\) denotes the entry in the row indexed by
\(X^a\) and the column indexed by \(X^{a'}\).

Set
\[
Q^{(N)}:=
\begin{pmatrix}
\beta_{0,0} & (a^{(N)})^{\mathsf T} & (b^{(N)})^{\mathsf T}\\
a^{(N)} & A^{(N)} & B^{(N)}\\
b^{(N)} & (B^{(N)})^{\mathsf T} & D^{(N)}
\end{pmatrix},
\qquad
G^{(N)}:=
\begin{pmatrix}
(c^{(N)})^{\mathsf T}\\
C^{(N)}\\
E^{(N)}
\end{pmatrix}.
\]
Then
\(M(N)=\begin{pmatrix}Q^{(N)}&G^{(N)}\\
(G^{(N)})^{\mathsf T}&F^{(N)}\end{pmatrix}\). Since
\(M(N)\succeq0\), the generalized Schur-complement criterion gives
\[
S^{(N)}
:=
Q^{(N)}
-
G^{(N)}(F^{(N)})^\dagger(G^{(N)})^{\mathsf T}
\succeq0.
\]
Consequently,
\begin{equation}
\label{eq:case-4-schur-decomposition}
M(N)=
\begin{pmatrix}
S^{(N)} & 0\\
0 & 0
\end{pmatrix}
+
\begin{pmatrix}
Q_1^{(N)} & G^{(N)}\\
(G^{(N)})^{\mathsf T} & F^{(N)}
\end{pmatrix},
\end{equation}
where
\(Q_1^{(N)}
 :=G^{(N)}(F^{(N)})^\dagger(G^{(N)})^{\mathsf T}\), and both
summands are positive semidefinite.

Write
\[
Q_1^{(N)}=
\begin{pmatrix}
\eta_3
&
(a_1^{(N)})^{\mathsf T}
&
(b_1^{(N)})^{\mathsf T}
\\
a_1^{(N)}
&
A_1^{(N)}
&
B_1^{(N)}
\\
b_1^{(N)}
&
(B_1^{(N)})^{\mathsf T}
&
D_1^{(N)}
\end{pmatrix}.
\]
Thus,
\[
\begin{aligned}
\eta_3
&:=
(c^{(N)})^{\mathsf T}(F^{(N)})^\dagger c^{(N)},\\
a_1^{(N)}
&:=
C^{(N)}(F^{(N)})^\dagger c^{(N)},
&\qquad
b_1^{(N)}
&:=
E^{(N)}(F^{(N)})^\dagger c^{(N)},\\
A_1^{(N)}
&:=
C^{(N)}(F^{(N)})^\dagger(C^{(N)})^{\mathsf T},
&
B_1^{(N)}
&:=
C^{(N)}(F^{(N)})^\dagger(E^{(N)})^{\mathsf T},\\
D_1^{(N)}
&:=
E^{(N)}(F^{(N)})^\dagger(E^{(N)})^{\mathsf T}.
\end{aligned}
\]

Set \(R:=N-s=n+s\). For \(0\leq a,b\leq R\), define
\(z_a:=X^{a+\ell_1}Y^{\ell_2}\) and
\(w_b:=X^{\ell_1}Y^{b+\ell_2}\). Then \(z_0=w_0\), and all these
monomials have degree at most \(N\).

The argument used to obtain the block-row identity in the proof of
Theorem~\ref{theo:case-3} applies in both coordinate directions.
Indeed, if \(r\in\mathcal T_N\), then
\[
z_ar-X^ar=pX^a\frac{r}{XY},
\qquad
w_br-Y^br=pY^b\frac{r}{XY},
\]
and the degrees are at most \(2N\). Hence \((\ast)\) gives
\begin{equation}
\label{eq:case-4-mixed-row-identities}
(c^{(N)})^{\mathsf T}=F^{(N)}_{z_0,\bullet},
\qquad
C^{(N)}_{X^a,\bullet}=F^{(N)}_{z_a,\bullet},
\qquad
E^{(N)}_{Y^b,\bullet}=F^{(N)}_{w_b,\bullet},
\end{equation}
for \(1\leq a,b\leq R\).

Applying the same Schur-completion calculation as in the proof of
Theorem~\ref{theo:case-3}, now using
\(F^{(N)}(F^{(N)})^\dagger F^{(N)}=F^{(N)}\), gives
\begin{equation}
\label{eq:case-4-completed-block-entries}
\begin{aligned}
\eta_3
&=F^{(N)}_{z_0,z_0},\\
(a_1^{(N)})_{X^a}
&=F^{(N)}_{z_a,z_0},
&
(b_1^{(N)})_{Y^b}
&=F^{(N)}_{w_b,z_0},\\
(A_1^{(N)})_{X^a,X^{a'}}
&=F^{(N)}_{z_a,z_{a'}},
&
(D_1^{(N)})_{Y^b,Y^{b'}}
&=F^{(N)}_{w_b,w_{b'}},\\
(B_1^{(N)})_{X^a,Y^b}
&=F^{(N)}_{z_a,w_b},
\end{aligned}
\end{equation}
whenever the exponents lie in \(\{1,\ldots,R\}\).

It remains to identify the completed \(X\)-\(Y\) block. For
\(1\leq a,b\leq R\),
\[
z_aw_b-X^aY^b
=
pX^{a-1}Y^{b-1}
\bigl(X^{\ell_1}Y^{\ell_2}+1\bigr),
\]
and \(\deg(z_aw_b)=a+b+2s\leq2N\). Thus \((\ast)\) and
\eqref{eq:case-4-completed-block-entries} give
\begin{equation}
\label{eq:case-4-cross-block-identity}
(B_1^{(N)})_{X^a,Y^b}
=
F^{(N)}_{z_a,w_b}
=
B^{(N)}_{X^a,Y^b},
\qquad 1\leq a,b\leq R.
\end{equation}

Let \(M^{[g]}(R)\) be the principal submatrix of the second summand in
\eqref{eq:case-4-schur-decomposition} indexed by all monomials of
degree at most \(R\). Then \(M^{[g]}(R)\succeq0\). The identities in
\eqref{eq:case-4-completed-block-entries} show that its completed
pure-\(X\) and pure-\(Y\) blocks have the required Hankel structure,
while \eqref{eq:case-4-cross-block-identity} identifies its
\(X\)-\(Y\) block with the corresponding block of \(M(N)\).
Consequently, \(M^{[g]}(R)\) is a well-defined bivariate moment matrix.

The verification that its columns satisfy
\(X^{\ell_1}Y^{\ell_2}=1\) is the same as in the proof of
Theorem~\ref{theo:case-3}. Namely,
\eqref{eq:case-4-mixed-row-identities} and
\eqref{eq:case-4-completed-block-entries} show that the columns
indexed by \(1\) and \(z_0=X^{\ell_1}Y^{\ell_2}\) agree on the
constant, pure-\(X\), pure-\(Y\), and mixed rows. Positivity then
implies that this relation is recursively generated.

The degree-\(2n\) truncation of \(M^{[g]}(R)\) therefore has a positive
extension to order \(n+s-1\leq R\) satisfying
\(X^{\ell_1}Y^{\ell_2}=1\). Hence it has the
\((s-1)\)-extension property relative to \(\{1,g,-g\}\). By
Theorem~\ref{theo:case-2}, it admits a representing measure
\(\mu_g\) supported on
\(\cZ_g=\cZ_{x^{\ell_1}y^{\ell_2}-1}\).

It remains to split the first summand in
\eqref{eq:case-4-schur-decomposition} between the coordinate axes.
Let \(S^{(R)}\) be the principal submatrix of \(S^{(N)}\) indexed
by \(1,X,\ldots,X^R,Y,\ldots,Y^R\). 
Set
\[
\begin{aligned}
\rho
&:=\beta_{0,0}-\eta_3,\\
u^{(R)}
&:=a^{(R)}-a_1^{(R)},
&\qquad
v^{(R)}
&:=b^{(R)}-b_1^{(R)},\\
P_X^{(R)}
&:=A^{(R)}-A_1^{(R)},
&
P_Y^{(R)}
&:=D^{(R)}-D_1^{(R)}.
\end{aligned}
\]
By
\eqref{eq:case-4-cross-block-identity},
\begin{equation}
\label{eq:case-4-residual-block}
S^{(R)}
=
\begin{pmatrix}
\rho & (u^{(R)})^{\mathsf T} & (v^{(R)})^{\mathsf T}\\
u^{(R)} & P_X^{(R)} & 0\\
v^{(R)} & 0 & P_Y^{(R)}
\end{pmatrix}
\succeq0.
\end{equation}

The generalized Schur-complement criterion applied to
\eqref{eq:case-4-residual-block} gives
\(\rho\geq\alpha_X+\alpha_Y\), where
\[
\alpha_X
:=
(u^{(R)})^{\mathsf T}(P_X^{(R)})^\dagger u^{(R)},
\qquad
\alpha_Y
:=
(v^{(R)})^{\mathsf T}(P_Y^{(R)})^\dagger v^{(R)}.
\]
Choose \(\eta_1,\eta_2\geq0\) such that
\(\eta_1\geq\alpha_Y\), \(\eta_2\geq\alpha_X\), and
\(\eta_1+\eta_2=\rho\). Then
\[
H_x(R):=
\begin{pmatrix}
\eta_1 & (v^{(R)})^{\mathsf T}\\
v^{(R)} & P_Y^{(R)}
\end{pmatrix}
\succeq0,
\qquad
H_y(R):=
\begin{pmatrix}
\eta_2 & (u^{(R)})^{\mathsf T}\\
u^{(R)} & P_X^{(R)}
\end{pmatrix}
\succeq0.
\]
Here \(H_x(R)\) is indexed by \(1,Y,\ldots,Y^R\), while
\(H_y(R)\) is indexed by \(1,X,\ldots,X^R\).

As in the treatment of the axis component in the proof of
Theorem~\ref{theo:case-3},
\eqref{eq:case-4-completed-block-entries} shows that these are
univariate Hankel moment matrices. Since \(n+1\leq R\), their
principal submatrices of order \(n+1\) give positive one-step
extensions of their order-\(n\) truncations. Property \((S_{n,1})\)
therefore yields representing measures on \(\mathbb R\). Pushing
the measure corresponding to \(H_x(R)\) forward under
\(t\mapsto(0,t)\) gives a measure \(\mu_x\) supported on \(\cZ_x\),
while pushing the measure corresponding to \(H_y(R)\) forward under
\(t\mapsto(t,0)\) gives a measure \(\mu_y\) supported on \(\cZ_y\).

Restricting \eqref{eq:case-4-schur-decomposition} and
\eqref{eq:case-4-residual-block} to moments of degree at most \(2n\)
gives
\(\beta^{(2n)}
 =\beta_x^{(2n)}+\beta_y^{(2n)}+\beta_g^{(2n)}\).
Consequently, \(\mu:=\mu_x+\mu_y+\mu_g\) represents
\(\beta^{(2n)}\), and
\(\operatorname{supp}\mu
 \subseteq\cZ_x\cup\cZ_y\cup\cZ_g=\cZ_p\).

We have shown that the \(2(\ell_1+\ell_2)\)-extension property
relative to \(\{1,p,-p\}\) implies the existence of a
\(\cZ_p\)-representing measure. The converse follows from the
standard necessity of positive extensions and the corresponding
localizing conditions. Hence \(\{1,p,-p\}\) satisfies property
\(\left(S_{n,\,2(\ell_1+\ell_2)}\right)\).
\end{proof}

\begin{corollary}
\label{cor-after-4-4}
Let \(\ell_1,\ell_2\in\NN\setminus\{1\}\) satisfy
\(\gcd(\ell_1,\ell_2)=1\), and let
\[
        p(x,y)
        :=
        q(x,y)\bigl(x^{\ell_1}y^{\ell_2}-1\bigr),
        \qquad
        q(x,y)\in\{x,y,xy\}.
\]
Suppose that \(n\ge\deg p\). Then every
\(f\in\mathcal P_{2n}\) that is strictly positive on \(\cZ_p\)
admits a representation
\[
        f=\sum_{j=1}^r g_j^2+ph,
\]
where \(g_1,\ldots,g_r,h\in\mathbb R[x,y]\) and
$
        g_j^2,\ ph
        \in
        \mathcal P_{
        2\left(
        n+2(\ell_1+\ell_2)\right)}$,
        $
        j=1,\ldots,r.
$
\end{corollary}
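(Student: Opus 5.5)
The argument follows that of Corollary~\ref{cor-after-4-1}, now combined with Theorems~\ref{theo:case-3} and~\ref{theo:case-4}. Set \(k:=2(\ell_1+\ell_2)\). First we check that the hypothesis \(n\ge\deg p\) matches the ranges of those theorems. For \(q\in\{x,y\}\) we have \(\deg p=\ell_1+\ell_2+1\), which is exactly the lower bound on \(n\) required in Theorem~\ref{theo:case-3}. For \(q=xy\) we have \(\deg p=\ell_1+\ell_2+2\), which is exactly the bound in Theorem~\ref{theo:case-4}. Hence in every case the generator family \(\{1,p,-p\}\) satisfies property \((S_{n,k})\) with this \(k\).

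Next, let \(f\in\mathcal P_{2n}\) be strictly positive on \(\cZ_p=K_{\{1,p,-p\}}\). Theorem~\ref{thm:Psatz-bounds}\textup{(i)} gives \(f\in\Sigma_{\{1,p,-p\},n+k}\), so
\[
f=\sigma_0+p\sigma_1-p\sigma_2,
\]
where \(\sigma_0=\sum_j g_j^2\), \(\sigma_1=\sum_i a_i^2\) and \(\sigma_2=\sum_i b_i^2\). By the definition \eqref{def:quadratic-module}, each term satisfies \(\deg g_j^2\le 2(n+k)\), \(\deg(pa_i^2)\le2(n+k)\) and \(\deg(pb_i^2)\le 2(n+k)\).

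Finally, put \(h:=\sigma_1-\sigma_2\). Then \(f=\sum_j g_j^2+ph\), and \(ph\) is a linear combination of the terms \(pa_i^2\) and \(pb_i^2\), so \(ph\in\mathcal P_{2(n+k)}\). Likewise each \(g_j^2\in\mathcal P_{2(n+k)}\). This is the claimed representation.

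The only step needing care is the degree bookkeeping in the first paragraph, namely that \(n\ge\deg p\) meets the hypotheses of Theorems~\ref{theo:case-3} and~\ref{theo:case-4} in each of the three cases \(q=x\), \(q=y\) and \(q=xy\). The case \(q=x\) follows from the \(q=y\) case of Theorem~\ref{theo:case-3} by interchanging the variables, as that theorem already allows.
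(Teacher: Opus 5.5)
Your proof is correct and is the paper's argument. The paper, too, notes that \(n\ge\deg p\) is exactly the hypothesis of Theorem~\ref{theo:case-3} (for \(q\in\{x,y\}\)) and of Theorem~\ref{theo:case-4} (for \(q=xy\)), which give property \((S_{n,2(\ell_1+\ell_2)})\); it then applies Theorem~\ref{thm:Psatz-bounds}\textup{(i)} and sets \(h:=\sigma_1-\sigma_2\), exactly as in Corollary~\ref{cor-after-4-1}.
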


The proof is identical to that of
Corollary~\ref{cor-after-4-1}, using
Theorem~\ref{theo:case-3} when \(q\in\{x,y\}\) and
Theorem~\ref{theo:case-4} when \(q=xy\).\\

We conclude the section with the reducible curve obtained by adjoining
the \(x\)-axis to \(\cZ_{x-y^\ell}\).

\begin{theorem}
\label{theo:case-5}
Let \(p(x,y):=y(x-y^\ell)\), where \(\ell\in\NN\). Then, for every
\(n\ge\ell+1\), the generator family \(\{1,p,-p\}\) satisfies
property
$
        \left(
        S_{n,\ell+\max\{\ell-1,2\}}
        \right).
$
\end{theorem}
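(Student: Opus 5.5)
We follow the Schur-complement decomposition used in the proof of Theorem~\ref{theo:case-3}, now with $g(x,y):=x-y^\ell$ in place of $x^{\ell_1}y^{\ell_2}-1$. Set $k:=\ell+\max\{\ell-1,2\}$ and $N:=n+k$. Let $\beta^{(2n)}$ have the $k$-extension property, with positive extension $M(N)$ and $M_p(N)=0$. As before, the factorization argument gives $L_\beta(pr)=0$ whenever $\deg(pr)\le 2N$.

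We order the monomials as $\vec X^{(0,N)}$ followed by $\vec{\mathcal T}_N$, the monomials containing a positive power of $Y$, and decompose $M(N)$ as in \eqref{eq:case-3-schur-decomposition}. The new point is the choice of the "shift" monomials. On the $X$-rows we need a monomial $z_a\in\mathcal T_N$ with $X^a r\equiv z_a r$ modulo $p$ for all $r\in\mathcal T_N$. Since $Y(X-Y^\ell)=0$, the relation $X\cdot Yr' = Y^{\ell+1}r'$ holds. Hence $X^a r$ equals $Y^{a\ell}r$ modulo $(p)$ for $r\in\mathcal T_N$, and so we take $z_a:=Y^{a\ell}$ for $a\ge1$.

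This works when $a\ell\le N$ and $a\ell+\deg r\le 2N$ with intermediate degrees controlled. The factorization needs a chain $X^a r\to X^{a-1}Y^\ell r\to\cdots$, each step lying in $(p)$ with degree at most $2N$. For $z_0$ we need $1$ itself, but $1\notin\mathcal T_N$. Instead we use the column relation differently: the Hankel structure of $A_1$ only matters for $a+b\ge 1$, and the entry at $(1,1)$ will be handled separately, similar to $\eta_3$ in Theorem~\ref{theo:case-4}.

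We obtain $(A_1^{(N)})_{X^a,X^b}=C^{(N)}_{Y^{a\ell},Y^{b\ell}}=\beta_{0,(a+b)\ell}$, which is Hankel in $a+b$ for $1\le a,b\le R$. Here $R\approx (N)/\ell$ is too small, so we must be careful. In fact we only need $R\ge n+1$ on the $X$-side for the axis part, and we only need order $n+\max\{\ell-1,1\}$ for the curve part, in order to invoke \cite[Th.~3.1]{z4} for $x=y^\ell$ (i.e.\ $(S_{n,\ell-1})$ in the table, with $\max\{\ell-1,1\}$). So instead we choose the Schur complement with respect to all monomials of $\mathcal T_N$ while restricting to $X^a$, $a\le n+1$. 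The $\ell$ extra levels of $k$ are exactly what is needed so that $Y^{a\ell}\cdot r$ for $a\le n+1$ and $\deg r\le n+\max\{\ell-1,2\}$ stays within degree $2N$. The accounting $(n+1)\ell$ versus $N$ is the delicate point.

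Then $M^{[g]}$, the second summand restricted to the appropriate index set, is a moment matrix whose columns satisfy $X=Y^\ell$. This is checked entrywise as in Theorem~\ref{theo:case-3}, and the relation is recursively generated by positivity. It therefore has the $\max\{\ell-1,1\}$-extension property relative to $\{1,g,-g\}$. By \cite[Th.~3.1]{z4} it gives a measure $\mu_g$ on $\cZ_{x-y^\ell}$.

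The residual $D=A-A_1$ is a positive semidefinite univariate Hankel matrix of size at least $n+2$, with its $(1,1)$ corner possibly adjusted. The one-step-extension property $(S_{n,1})$ of $\{1\}$ then gives $\mu_y$ on $\cZ_y$, and $\mu_y+\mu_g$ represents $\beta^{(2n)}$. The necessity direction is standard.

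The main obstacle is the degree bookkeeping. We must make the shifts $X^a\mapsto Y^{a\ell}$ legal: every intermediate product must have degree at most $2N$, and enough rows of $A_1$ must be Hankel to supply a one-step extension of order $n+1$ on the axis. A second difficulty is treating the constant entry, whose shift is not available in $\mathcal T_N$. If direct shifting overflows the degree, the plan is instead to shift via $XY^{j}\mapsto Y^{j+\ell}$ one factor of $X$ at a time inside the mixed block. This reduces the required degree budget to $\ell$ extra per level, which is the source of the summand $\ell$ in $k$.
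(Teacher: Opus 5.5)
\textbf{There is a genuine gap in the choice of shift monomials.} It sits exactly where you suspected the difficulty lies: the monomials $z_a:=Y^{a\ell}$ cannot be used. To get the row identity $B^{(N)}_{X^a,\bullet}=C^{(N)}_{z_a,\bullet}$, the monomial $z_a$ must index a row of $M(N)$. Moreover, $\deg(z_ar)\le 2N$ is needed for every $r\in\mathcal T_N$, since $A_1^{(N)}=B^{(N)}(C^{(N)})^\dagger(B^{(N)})^{\mathsf T}$ involves all of $\mathcal T_N$. Both conditions amount to $a\ell\le N=n+\ell+h$, where $h=\max\{\ell-1,2\}$.

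For $a=n+1$ this reads $n(\ell-1)\le h$, which fails for every $n\ge\ell+1$ once $\ell\ge2$. So you obtain the Hankel structure of $A_1^{(N)}$ only on $1,X,\dots,X^{\lfloor N/\ell\rfloor}$. That is too short for two things:
\begin{itemize}
\item the one-step extension on the axis;
\item the order-$(n+\max\{\ell-1,1\})$ matrix $M^{[g]}$ that you need in order to invoke \cite[Theorem~3.1]{z4} on $\cZ_{x-y^\ell}$, since $M^{[g]}$ must contain the rows $X^a$ for $a\le n+\max\{\ell-1,1\}$.
\end{itemize}
Your assertion that the $\ell$ extra levels in $k$ keep $Y^{(n+1)\ell}r$ within degree $2N$ is also false in general; it would require $(n-1)\ell\le n+h$. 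Your fallback, iterating the shift one factor of $X$ at a time, ends at the same monomial $Y^{a\ell}$ and has the same cumulative degree cost. Your choice is harmless only when $\ell=1$.

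\textbf{The missing idea is that only one factor of $X$ has to be traded.} Take $z_a:=X^{a-1}Y^\ell$. For $r\in\mathcal T_N$ one has $X^ar-z_ar=pX^{a-1}(r/Y)$, and $\deg z_a=a+\ell-1$. The extra degree cost is therefore a fixed $\ell-1$, independent of $a$. This is why $R:=N-\ell=n+h$ rows are available, and it is the actual source of the summand $\ell$ in $k$.

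With this choice the argument runs as follows.
\begin{itemize}
\item For $1\le a,b\le R$, $(A_1^{(N)})_{X^a,X^b}=C^{(N)}_{z_a,z_b}=\beta_{a+b-2,2\ell}$.
\item The constant row comes from $\operatorname{Ran}((B^{(N)})^{\mathsf T})\subseteq\operatorname{Ran}C^{(N)}$, that is, $B^{(N)}_{1,\bullet}(C^{(N)})^\dagger C^{(N)}=B^{(N)}_{1,\bullet}$. This gives $(A_1^{(N)})_{1,X^a}=B^{(N)}_{1,z_a}=\beta_{a-1,\ell}$.
\item To see that the block indexed by $1,X,\dots,X^R$ is Hankel, you also need the saturation identity $\beta_{a+b-2,2\ell}=\beta_{a+b-1,\ell}$. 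It follows from $L_\beta(p\,X^{a+b-2}Y^{\ell-1})=0$. Your proposal has no counterpart of this step.
\item The $(1,1)$ entry needs no adjustment, since it is the only entry with index sum $0$.
\end{itemize}
Since $R=n+h\ge n+\max\{\ell-1,1\}$ and $R\ge n+1$, the rest of your outline then goes through as you describe. This covers the relation $X=Y^\ell$ in $M^{[g]}(R)$ via $z_1=Y^\ell$, the appeal to \cite[Theorem~3.1]{z4} on the curve, and property $(S_{n,1})$ for the residual Hankel matrix on the axis.
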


\begin{proof}
Set \(h:=\max\{\ell-1,2\}\), \(k:=\ell+h\), and
\(N:=n+k=n+\ell+h\). Also, put \(g(x,y):=x-y^\ell\).

Let \(\beta\equiv\beta^{(2n)}\) have the \(k\)-extension property
relative to \(\{1,p,-p\}\). Thus, \(M(n)\) admits a positive
extension \(M(N)\) satisfying \(M_p(N)\succeq0\) and
\(M_{-p}(N)\succeq0\). Since \(M_{-p}(N)=-M_p(N)\), it follows that
\(M_p(N)=0\). By polarization and recursive generation, the columns
of \(M(N)\) satisfy
\[
X^{i+1}Y^{j+1}=X^iY^{j+\ell+1}
\]
whenever the monomials involved have degree at most \(N\).

As in the proof of Theorem~\ref{theo:case-3}, the corresponding
factorization argument gives
\begin{equation}
\label{eq:case-5-saturation}
L_\beta(pr)=0
\qquad\text{whenever}\qquad
\deg(pr)\leq2N.
\end{equation}
Indeed, one factors each monomial \(r=r_1r_2\) so that
\(\deg(pr_1)\leq N\) and \(\deg r_2\leq N\), and then pairs the
column relation \((pr_1)(X)=0\) with \(r_2(X)\).

The zero set of \(p\) is
\(\cZ_p=\cZ_y\cup\cZ_g\). We shall decompose the degree-\(2n\)
truncation of \(M(N)\) into positive moment matrices corresponding
to these two components.

Order the monomials of degree at most \(N\) as
\(\vec X^{(0,N)},\vec{\mathcal T}_N\), where
\(\vec X^{(0,N)}=(1,X,\ldots,X^N)\) and
\(\vec{\mathcal T}_N\) consists of the monomials of degree at most
\(N\) containing a positive power of \(Y\). With respect to this
ordering, write
\[
M(N)=
\begin{pmatrix}
A^{(N)} & B^{(N)}\\
(B^{(N)})^{\mathsf T} & C^{(N)}
\end{pmatrix}.
\]
Thus, the rows and columns of \(A^{(N)}\) are indexed by the
monomials \(X^a\), \(0\leq a\leq N\), where \(X^0=1\), while the
rows and columns of \(C^{(N)}\) are indexed by the monomials in
\(\mathcal T_N\). We use monomial subscripts throughout; for
example, \((A^{(N)})_{X^a,X^b}\) denotes the entry in the row
indexed by \(X^a\) and the column indexed by \(X^b\).

Since \(M(N)\succeq0\), the generalized Schur-complement criterion
gives
\(\operatorname{Ran}((B^{(N)})^{\mathsf T})
 \subseteq\operatorname{Ran}C^{(N)}\). Define
\[
A_1^{(N)}
:=
B^{(N)}(C^{(N)})^\dagger(B^{(N)})^{\mathsf T}.
\]
Then
\begin{equation}
\label{eq:case-5-schur-decomposition}
M(N)=
\begin{pmatrix}
A^{(N)}-A_1^{(N)} & 0\\
0 & 0
\end{pmatrix}
+
\begin{pmatrix}
A_1^{(N)} & B^{(N)}\\
(B^{(N)})^{\mathsf T} & C^{(N)}
\end{pmatrix},
\end{equation}
and both summands are positive semidefinite. We shall show that,
after suitable truncation, the first summand corresponds to
\(\cZ_y\), while the second corresponds to \(\cZ_g\).

Set \(R:=N-\ell=n+h\). For each \(a\in\{1,\ldots,R\}\), define
\[
z_a:=X^{a-1}Y^\ell.
\]
Since \(\deg z_a=a+\ell-1\leq R+\ell-1=N-1\), the monomial \(z_a\)
belongs to \(\mathcal T_N\).

Let \(r\in\mathcal T_N\). Since \(r\) contains a positive power of
\(Y\), the quotient \(r/Y\) is a monomial, and
\[
X^ar-z_ar
=
X^{a-1}r(X-Y^\ell)
=
pX^{a-1}\frac rY.
\]
The terms on the left have degree at most \(2N\), so
\eqref{eq:case-5-saturation} gives
\(L_\beta(X^ar)=L_\beta(z_ar)\). In terms of the blocks
\(B^{(N)}\) and \(C^{(N)}\), this means
\begin{equation}
\label{eq:case-5-block-row-identity}
B^{(N)}_{X^a,\bullet}
=
C^{(N)}_{z_a,\bullet},
\qquad 1\leq a\leq R.
\end{equation}

Applying the same Schur-completion calculation as in the proof of
Theorem~\ref{theo:case-3}, and using
\(C^{(N)}(C^{(N)})^\dagger C^{(N)}=C^{(N)}\), gives
\begin{equation}
\label{eq:case-5-completed-block-entries}
(A_1^{(N)})_{X^a,X^b}
=
C^{(N)}_{z_a,z_b},
\qquad 1\leq a,b\leq R.
\end{equation}

The entries involving the constant monomial require a separate
observation. Since
\(\operatorname{Ran}((B^{(N)})^{\mathsf T})
 \subseteq\operatorname{Ran}C^{(N)}\), we have
\[
B^{(N)}_{1,\bullet}
(C^{(N)})^\dagger C^{(N)}
=
B^{(N)}_{1,\bullet}.
\]
Consequently, for \(1\leq a\leq R\),
\begin{equation}
\label{eq:case-5-constant-row-entries}
\begin{aligned}
(A_1^{(N)})_{1,X^a}
&=
B^{(N)}_{1,\bullet}(C^{(N)})^\dagger
(B^{(N)}_{X^a,\bullet})^{\mathsf T}\\
&=
B^{(N)}_{1,\bullet}(C^{(N)})^\dagger
C^{(N)}e_{z_a}\\
&=
B^{(N)}_{1,z_a}.
\end{aligned}
\end{equation}

Equations \eqref{eq:case-5-completed-block-entries} and
\eqref{eq:case-5-constant-row-entries} show that the relevant
portion of \(A_1^{(N)}\) is Hankel. Indeed,
\[
(A_1^{(N)})_{1,X^a}
=
\beta_{a-1,\ell},
\qquad
(A_1^{(N)})_{X^a,X^b}
=
\beta_{a+b-2,\,2\ell}.
\]
Moreover, \eqref{eq:case-5-saturation}, applied to
\(X^{a+b-2}Y^\ell(X-Y^\ell)\), gives
\[
\beta_{a+b-2,\,2\ell}
=
\beta_{a+b-1,\ell}.
\]
Thus \((A_1^{(N)})_{X^a,X^b}\) depends only on \(a+b\) and is
consistent with the entries
\((A_1^{(N)})_{1,X^{a+b}}\) whenever the latter are within the
available range. Hence the principal submatrix of \(A_1^{(N)}\)
indexed by \(1,X,\ldots,X^R\) is Hankel.

Let \(M^{[g]}(R)\) be the principal submatrix of the second summand in
\eqref{eq:case-5-schur-decomposition} indexed by all monomials of
degree at most \(R\). Explicitly,
\[
M^{[g]}(R)
:=
\begin{pmatrix}
A_1^{(R)} & B^{(R)}\\
(B^{(R)})^{\mathsf T} & C^{(R)}
\end{pmatrix},
\]
where the blocks denote the corresponding restrictions of those in
\eqref{eq:case-5-schur-decomposition}. As a principal submatrix of
a positive semidefinite matrix, \(M^{[g]}(R)\succeq0\). The entries of
\(B^{(R)}\) and \(C^{(R)}\) are entries of the original moment
matrix, while the preceding identities show that \(A_1^{(R)}\) is
Hankel. Consequently, \(M^{[g]}(R)\) is a well-defined bivariate moment
matrix.

We next show that its columns satisfy \(X=Y^\ell\). Since
\(z_1=Y^\ell\), \eqref{eq:case-5-block-row-identity}, with \(a=1\),
shows that the columns indexed by \(X\) and \(Y^\ell\) agree on all
rows indexed by monomials in \(\mathcal T_R\). If the row is indexed
by \(X^a\), where \(1\leq a\leq R\), then
\eqref{eq:case-5-block-row-identity} and
\eqref{eq:case-5-completed-block-entries} give
\[
(M^{[g]}(R))_{X^a,X}
=
(A_1^{(N)})_{X^a,X}
=
C^{(N)}_{z_a,z_1}
=
B^{(N)}_{X^a,z_1}
=
(M^{[g]}(R))_{X^a,Y^\ell}.
\]
For the constant row,
\eqref{eq:case-5-constant-row-entries}, with \(a=1\), gives
\[
(M^{[g]}(R))_{1,X}
=
(A_1^{(N)})_{1,X}
=
B^{(N)}_{1,z_1}
=
(M^{[g]}(R))_{1,Y^\ell}.
\]
Thus the columns indexed by \(X\) and \(Y^\ell\) coincide. Since
\(M^{[g]}(R)\succeq0\), the standard kernel property for moment matrices
shows that this relation is recursively generated.

Set \(r_0:=\max\{\ell-1,1\}\). Since \(r_0\leq h\), the restriction
of \(M^{[g]}(R)\) to order \(n+r_0\) is a positive \(r_0\)-extension of
its degree-\(2n\) truncation satisfying \(X=Y^\ell\). Equivalently,
the localizing matrices associated with \(g\) and \(-g\) vanish.
By \cite[Theorem~3.1]{z4}, this truncation admits a representing
measure \(\mu_g\) supported on
\(\cZ_g=\cZ_{x-y^\ell}\).

It remains to treat the first summand in
\eqref{eq:case-5-schur-decomposition}. Set
\[
D^{(N)}:=A^{(N)}-A_1^{(N)}.
\]
Then \(D^{(N)}\succeq0\). Moreover, \(A^{(N)}\) is Hankel, and the
preceding identities show that the principal submatrix of
\(A_1^{(N)}\) indexed by \(1,X,\ldots,X^R\) is Hankel. Therefore,
the corresponding principal submatrix of \(D^{(N)}\) is a positive
semidefinite univariate Hankel moment matrix.

Since \(h\geq2\), we have \(n+1\leq R=n+h\). Thus the principal
submatrix of \(D^{(N)}\) indexed by \(1,X,\ldots,X^{n+1}\) is a
positive semidefinite univariate Hankel moment matrix extending the
submatrix indexed by \(1,X,\ldots,X^n\). As in the axis argument in
the proof of Theorem~\ref{theo:case-3}, property \((S_{n,1})\)
gives a representing measure \(\nu\) on \(\mathbb R\) for its
degree-\(2n\) truncation. Pushing \(\nu\) forward under
\(t\mapsto(t,0)\) gives a measure \(\mu_y\) supported on \(\cZ_y\).

Restricting \eqref{eq:case-5-schur-decomposition} to moments of
degree at most \(2n\) gives
\[
\beta^{(2n)}
=
\beta_y^{(2n)}
+
\beta_g^{(2n)}.
\]
Consequently, \(\mu:=\mu_y+\mu_g\) represents \(\beta^{(2n)}\), and
\[
\operatorname{supp}\mu
\subseteq
\cZ_y\cup\cZ_g
=
\cZ_p.
\]

We have proved that the
\(\ell+\max\{\ell-1,2\}\)-extension property relative to
\(\{1,p,-p\}\) implies the existence of a
\(\cZ_p\)-representing measure. The converse follows from the
standard necessity of positive extensions and the corresponding
localizing conditions. Hence \(\{1,p,-p\}\) satisfies property
\(\left(S_{n,\ell+\max\{\ell-1,2\}}\right)\).
\end{proof}
\begin{corollary}
\label{cor-after-4-6}
Let
\[
        p(x,y)\in
        \{y(x-y^\ell),\,x(y-x^\ell)\},
        \qquad
        \ell\in\NN,
\]
and suppose that \(n\ge\ell+1\). Then every
\(f\in\mathcal P_{2n}\) that is strictly positive on \(\cZ_p\)
admits a representation
$$
        f=\sum_{j=1}^r g_j^2+ph,
$$
where \(g_1,\ldots,g_r,h\in\mathbb R[x,y]\) and
$
        g_j^2,\ ph
        \in
        \mathcal P_{
        2\left(
        n+\ell+\max\{\ell-1,2\}
        \right)}$,
        $j=1,\ldots,r.$
\end{corollary}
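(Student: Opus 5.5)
The plan follows the pattern of Corollaries~\ref{cor-after-4-1} and~\ref{cor-after-4-4}. Set \(k:=\ell+\max\{\ell-1,2\}\). The argument splits into the two members of the family.

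First, take \(p(x,y)=y(x-y^\ell)\). Since \(n\ge\ell+1=\deg p\), Theorem~\ref{theo:case-5} shows that \(\{1,p,-p\}\) satisfies property \((S_{n,k})\). Theorem~\ref{thm:Psatz-bounds}\textup{(i)} then gives \(f\in\Sigma_{\{1,p,-p\},n+k}\) for every \(f\in\mathcal P_{2n}\) strictly positive on \(\cZ_p\). Thus
\[
f=\sigma_0+p\sigma_1-p\sigma_2,
\]
where \(\sigma_0,\sigma_1,\sigma_2\) are sums of squares whose terms satisfy \(\deg(g^2)\le 2(n+k)\) and \(\deg(p\,g^2)\le2(n+k)\), by definition \eqref{def:quadratic-module}. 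Write \(\sigma_0=\sum_j g_j^2\) and put \(h:=\sigma_1-\sigma_2\). Then \(f=\sum_jg_j^2+ph\). Here \(ph\) is a difference of elements of \(\mathcal P_{2(n+k)}\), so it lies in \(\mathcal P_{2(n+k)}\), as required.

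Second, take \(p(x,y)=x(y-x^\ell)\). This case reduces to the first via the involution \(\iota(x,y)=(y,x)\). It maps \(\cZ_{x(y-x^\ell)}\) onto \(\cZ_{y(x-y^\ell)}\), and it preserves degrees, sums of squares, and the truncated quadratic modules. So apply the first case to \(f\circ\iota\), which is strictly positive on \(\cZ_{y(x-y^\ell)}\), and then compose the resulting representation with \(\iota\).

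The only point that needs care is this symmetry step: the positive result itself is available from Theorem~\ref{theo:case-5} and needs no new work. The degree bookkeeping \(\deg(ph)\le2(n+k)\) follows directly from the definition of \(\Sigma_{Q,n+k}\).
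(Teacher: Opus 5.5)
Your proposal is correct and is the paper's argument. You use Theorem~\ref{theo:case-5} to obtain $(S_{n,k})$ with $k=\ell+\max\{\ell-1,2\}$, apply Theorem~\ref{thm:Psatz-bounds}\textup{(i)}, and set $h:=\sigma_1-\sigma_2$ as in Corollary~\ref{cor-after-4-1}, then handle $x(y-x^\ell)$ by interchanging $x$ and $y$ (your explicit transfer via $f\circ\iota$ spells out the symmetry step the paper only mentions).
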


The proof is identical to that of
Corollary~\ref{cor-after-4-1}, using
Theorem~\ref{theo:case-5} and, for \(p=x(y-x^\ell)\), symmetry under
the interchange of \(x\) and \(y\).

\end{document}